\numberwithin{equation}{section}
\newtheorem{Th}[subsection]{Theorem}
\newtheorem*{Th*}{Theorem}
\newtheorem{Lemma}[subsection]{Lemma}
\newtheorem{Prop}[subsection]{Proposition}
\newtheorem{Cor}[subsection]{Corollary}
\theoremstyle{definition}
\newtheorem{definition}[subsection]{Definition}
\newtheorem*{definition*}{Definition}
\newtheorem{Remark}[subsection]{Remark}
\newtheorem{Example}[subsection]{Example}
\newcommand{\comm}[1]{}
\definecolor{DarkGreen}{rgb}{0,0.5,0.1}
\newcommand\soutD{\bgroup\markoverwith
{\textcolor{DarkGreen}{\rule[.5ex]{2pt}{1pt}}}\ULon}
\DeclarePairedDelimiter\floor{\lfloor}{\rfloor}
\newcommand*{\rom}[1]{\expandafter\@slowromancap\romannumeral #1@}
\newcommand{\doublewidetilde}[1]{{%
  \mathpalette\double@widetilde{#1}%
}}
\newcommand{\double@widetilde}[2]{%
  \sbox\z@{$\m@th#1\widetilde{#2}$}%
  \ht\z@=.9\ht\z@
  \widetilde{\box\z@}%
}
\begin{document}
\title[An elementary description of nef cone for IHSM]{An elementary description of nef cone for irreducible holomorphic symplectic manifolds}
\author{Anastasia V.~Vikulova}
\address{{\sloppy
\parbox{0.9\textwidth}{
Steklov Mathematical Institute of Russian
Academy of Sciences,
8 Gubkin str., Moscow, 119991, Russia
\\[5pt]
Laboratory of Algebraic Geometry, National Research University Higher
School of Economics, 6 Usacheva str., Moscow, 119048, Russia.
}\bigskip}}
\email{vikulovaav@gmail.com}
\date{}
\maketitle

	\begin{abstract}
	We describe  MBM classes  for irreducible holomorphic symplectic manifolds of $\mathrm{K3}$ and Kummer types. These classes are the monodromy images of extremal rational curves which give the faces of the nef cone of some birational model. We study the connection between our results and A.~Bayer and E.~Macr\`{\i}'s theory. We apply the numerical method of description  due to E.~Amerik and M.~Verbitsky in low dimensions to the~$\mathrm{K3}$ type and Kummer type cases.
	\end{abstract}

\maketitle

\tableofcontents
	
	\section{Introduction}
	
There is a great interest in studying ample cones in algebraic geometry in the context of minimal model program. Much had been done to find ample or K\"{a}hler cone of irreducible holomorphic symplectic manifolds. The simplest example of an  irreducible holomorphic symplectic manifolds is a $\mathrm{K3}$ surface. The self-intersection of smooth rational irreducible curves on $\mathrm{K3}$ surfaces is $-2.$ The K\"{a}hler cone of a $\mathrm{K3}$ surface consists of the classes of type $(1,1)$ in the second cohomologies with real coefficients,   which have positive self-intersection and which positively intersect with~$(-2)$-curves. The ample cone of a projective $\mathrm{K3}$ surface is characterized in the same way  in the N\'{e}ron-Severi group with real coefficients. So orthogonal hyperplanes to the classes of square $-2$ in the positive cone give the chamber decomposition of the positive cone, where the   K\"{a}hler cone (or the ample cone) is one of the chambers  (see, for example, the paper ~\cite[Corollary~3.4]{Huybrechts}).

However, it is not so easy to describe the ample or K\"{a}hler cone for other irreducible holomorphic symplectic manifolds. The recent results of A.~Bayer and E.~Macr\`{\i} use the difficult technique of the theory of  Bridgeland stability conditions.  There is a connection between wall-crossing for stability conditions and walls of the positive cone of a moduli space of $H$-Gieseker stable sheaves thanks to a natural morphism from the set of chambers to the N\'{e}ron-Severi group, constructed by A.~Bayer and E.~Macr\`{i} in the paper~\cite{BM12}. This technique was applied to all known examples of holomorphic symplectic manifolds (see \cite{BM},\cite{Bridgeland},\cite{MZ},\cite{Yoshioka}).  A.~Bayer and E.~Macr\`{\i} described the chamber decomposition of the positive cone of the moduli space of semistable sheaves on a~$\mathrm{K3}$ surface with primitive Mukai vector. In fact, such a moduli space is deformation equivalent to the Hilbert scheme of points on a~$\mathrm{K3}$ surface.  We mention two theorems from the papers ~\cite{BM} and ~\cite{Yoshioka}, which describe the ample cones of $\mathrm{K3}$ and Kummer types manifolds. 

Before we formulate the theorems we first introduce the following notations:
\begin{definition}
Let $X$ be a smooth projective surface. Denote by $H^*_{\text{alg}}(X,\mathbb{Z})$ the algebraic part of the ring of even cohomologies, i.e. 
$$
H^*_{\text{alg}}(X,\mathbb{Z})\,=\,H^0(X,\mathbb{Z}) \oplus \mathrm{NS}(X) \oplus H^4(X,\mathbb{Z}).
$$
\noindent For an element $v \in H^*_{\text{alg}}(X,\mathbb{Z})$ denote by $v^{\perp}$  the subspace of elements in $H^*_{\text{alg}}(S,\mathbb{Z})$ which are orthogonal to $v$ with respect to the Mukai pairing given by
$$
((r_1,\alpha_1,s_1),(r_2,\alpha_2,s_2))\,=\,\alpha_1 \cdot \alpha_2 - r_1 \cdot s_2 - r_2\cdot s_1,
$$

\noindent where $(r_i,\alpha_i,s_i) \in H^*_{\text{alg}}(X,\mathbb{Z})$ with $r_i \in H^0(X, \mathbb{Z}),$ $\alpha_i \in \mathrm{NS}(X)$ and~\mbox{$s_i \in H^4(X, \mathbb{Z}).$}

\end{definition}

\begin{Th}[{\cite[Theorem 12.1]{BM}}]\label{Theorem12.1} Let $S$ be a $\mathrm{K3}$ surface, $v \in H^*_{\text{alg}}(X,\mathbb{Z})$ be a fixed primitive Mukai vector and $M_{\sigma}(v)$ be the moduli space of $\sigma$-semistable sheaves on~$S$ with the Mukai vector $v$ and the stability condition $\sigma \in \mathrm{Stab}(S)$. Consider the set~$W$ of elements $a \in H^*_{\text{alg}}(S,\mathbb{Z})$ such that $a^2 \geqslant -2$ and~\mbox{$0 \leqslant (v,a) \leqslant \frac{v^2}{2}$} with respect to the Mukai pairing, where the element $a$ is identified with its image under the isometry 
$$
\theta:v^{\perp} \xrightarrow{\sim} \mathrm{NS}(M_{\sigma}(v))
$$
\noindent which is defined in~\cite[(1.6)]{YoshiokaAb}.  Then the set of $a^{\perp}$ with $a \in W$ defines a chamber decomposition of the positive cone of $M_{\sigma}(v).$  The ample cone $\mathrm{Amp}(M_{\sigma}(v))$ is one of the chambers: $\eta \in \mathrm{Amp}(M_{\sigma}(v))$ if and only if $(\eta,a)>0$ for all~\mbox{$a \in W.$}

\end{Th}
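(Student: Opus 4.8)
The plan is to realize the ample cone of $M_\sigma(v)$ through the geometry of the stability manifold $\mathrm{Stab}(S)$, following the wall-crossing philosophy. The central tool is the linearization map of Bayer and Macr\`i referenced above, which assigns to each stability condition $\tau=(Z,\mathcal{A})$ in the connected component containing $\sigma$ a real divisor class $\ell_\tau\in\mathrm{NS}(M_\sigma(v))_{\mathbb{R}}$, characterised by its intersection with curves: if $\mathcal{C}\subset M_\sigma(v)$ is an integral curve with (quasi-)universal family $\mathcal{E}$, then
\[
\ell_\tau\cdot\mathcal{C}\;=\;\mathrm{Im}\!\left(-\frac{Z\bigl(\Phi_{\mathcal{E}}(\mathcal{O}_{\mathcal{C}})\bigr)}{Z(v)}\right).
\]
First I would establish the \emph{positivity lemma}: for every $\tau$ in the chamber of $\mathrm{Stab}(S)$ containing $\sigma$, the class $\ell_\tau$ is nef, and $\ell_\tau\cdot\mathcal{C}=0$ precisely when the objects parametrised by $\mathcal{C}$ are $S$-equivalent with respect to $\tau$. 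Since inside a chamber no strictly semistable objects of class $v$ appear, this forces $\ell_\tau$ to be strictly positive on all curves, hence ample; thus $\ell$ maps the chamber of $\sigma$ into $\mathrm{Amp}(M_\sigma(v))$.

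Next I would analyse the walls. The component $\mathrm{Stab}(S)$ carries a locally finite wall-and-chamber structure, where each wall $\mathcal{W}$ is cut out by a rank-two primitive hyperbolic sublattice $\mathcal{H}\subset H^*_{\mathrm{alg}}(S,\mathbb{Z})$ containing $v$, and crossing $\mathcal{W}$ is exactly where the set of $\tau$-semistable objects of class $v$ changes. For $\tau\in\mathcal{W}$ there exist strictly semistable objects, and a Jordan--H\"older filtration produces a class $a\in\mathcal{H}$ realised as the Mukai vector of a stable factor. Such a class automatically satisfies $a^2\geqslant -2$, since it is the Mukai vector of an actual semistable object, and, after replacing $a$ by $v-a$ if necessary, the normalisation $0\leqslant (v,a)\leqslant\tfrac{v^2}{2}$ holds --- this is the symmetry interchanging a subobject with its quotient. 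I would then show that $\theta(\bar a)$, where $\bar a$ is the orthogonal projection of $a$ to $v^\perp$, spans the face of $\mathrm{NS}(M_\sigma(v))$ on which $\ell_\tau$ lands; note that $(\eta,a)=(\eta,\bar a)$ for every $\eta\in v^\perp\cong\mathrm{NS}(M_\sigma(v))$, so the pairing in the theorem is well-posed. Using that the contracted curves are exactly the $S$-equivalence classes detected by $\ell_\tau$, this yields the inclusion $\mathrm{Amp}(M_\sigma(v))\subseteq\{\eta:(\eta,a)>0\ \forall a\in W\}$.

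For the reverse inclusion I would argue that every wall predicted by $W$ is genuinely realised by a wall of $\mathrm{Stab}(S)$. Given $a\in W$, the span of $v$ and $a$ is a rank-two sublattice of the required signature, so the theory supplies a potential wall associated to it; the classification of walls into fake, divisorial, and flopping types then shows that the hyperplane $a^\perp$ either supports a face of the ample chamber or becomes trivial, but in no case produces an extraneous constraint. Because the Bayer--Macr\`i map induces a bijection between the chamber structure of $\mathrm{Stab}(S)$ and that of the movable cone of $M_\sigma(v)$, no spurious walls are created and none are missed, so the two chamber decompositions coincide. Combining both inclusions gives the characterisation $\eta\in\mathrm{Amp}(M_\sigma(v))$ if and only if $(\eta,a)>0$ for all $a\in W$.

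The hardest part will be the positivity lemma together with the precise bookkeeping of walls. Proving nefness of $\ell_\tau$ requires a delicate comparison of the phases of the objects varying along a curve, and pinning down exactly when equality $\ell_\tau\cdot\mathcal{C}=0$ holds (the $S$-equivalence criterion) is what makes the correspondence between numerical and geometric walls sharp. Equally subtle is ruling out \emph{totally semistable} walls that might create unexpected contractions: one must verify, via the structure of spherical and isotropic classes inside each $\mathcal{H}$, that the bounds $a^2\geqslant -2$ and $0\leqslant(v,a)\leqslant\tfrac{v^2}{2}$ capture all and only the destabilising factors, so that the faces of the nef cone are neither over- nor under-counted.
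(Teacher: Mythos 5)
You cannot be compared against the paper's own argument here, because the paper has none: Theorem~\ref{Theorem12.1} is imported verbatim from \cite[Theorem 12.1]{BM} and used as a black box. The paper's actual contribution runs in the other direction --- Theorem~\ref{TheoremK3} gives an elementary, MBM-theoretic description of the same walls, and Proposition~\ref{BMeqAV} matches that description against the set $W$ of this theorem. So your proposal has to be judged as a reconstruction of Bayer--Macr\`{\i}'s original proof, and at the level of skeleton it is faithful: the linearization map $\ell_\tau$ of \cite{BM12}, the Positivity Lemma, and the wall-and-chamber structure on $\mathrm{Stab}(S)$ are exactly the right ingredients.

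There is, however, a genuine gap at the step you treat as routine: the normalization of destabilizing classes. A Jordan--H\"older factor $a$ of a strictly semistable object can satisfy $(v,a)<0$; this actually happens, namely for spherical factors (possibly occurring with multiplicity) at totally semistable walls. In that case $(v,v-a)=v^2-(v,a)>v^2$, so \emph{neither} $a$ nor $v-a$ lies in $W$, and the subobject/quotient swap cannot produce $0\leqslant (v,a)\leqslant v^2/2$. This is not a cosmetic defect, because the lower bound $0\leqslant(v,a)$ is precisely what excludes the fake walls: at a totally semistable wall with no contracted curves, $\ell_\tau$ remains ample, so the hyperplane cut out by that wall passes through the \emph{interior} of $\mathrm{Amp}(M_\sigma(v))$, and admitting it into the arrangement would falsify the theorem. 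Note also that for a rank-two lattice $\mathcal{H}\ni v$, every class $a\in\mathcal{H}$ not proportional to $v$ cuts out the same hyperplane $a^\perp\cap v^\perp=\mathcal{H}^\perp\cap v^\perp$; so the real statement is a dichotomy on lattices --- $\mathcal{H}$ contains some class of $W$ if and only if the associated wall induces a contraction --- and the class of $W$ witnessing a contracting wall need not be a Jordan--H\"older factor or its complement at all. Establishing this dichotomy (via the spherical and isotropic classes in $\mathcal{H}$) is the content of Bayer--Macr\`{\i}'s classification of walls and of their Section 12; your outline delegates exactly this to a closing ``verification'' while relying on a normalization mechanism that provably cannot perform it.

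Two further points. First, ``nef and strictly positive on all curves, hence ample'' is not a valid implication in general (Mumford's classical example on a ruled surface); here one needs $q(\ell_\tau)>0$ together with base-point-freeness on the $K$-trivial variety $M_\sigma(v)$, or the hyperk\"ahler cone theory as in Theorem~\ref{ThofBoucksom}, to upgrade it. Second, your two inclusions are attached to the wrong ingredients: the $S$-equivalence/contraction analysis of walls bounding the chamber of $\sigma$ is what shows that every face of the nef cone lies on some $a^\perp$ with $a\in W$, i.e. it gives $\{\eta\in\mathrm{Pos}: (\eta,a)>0\ \text{for all}\ a\in W\}\subseteq\mathrm{Amp}(M_\sigma(v))$; the opposite inclusion $\mathrm{Amp}(M_\sigma(v))\subseteq\{\eta: (\eta,a)>0\ \text{for all}\ a\in W\}$ is the one threatened by spurious hyperplanes and is exactly what requires the dichotomy above.
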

\begin{Th}[\cite{Yoshioka}]\label{Theorem12.1Kummer} Let $A$ be an abelian surface, $v \in H^*_{\text{alg}}(A,\mathbb{Z})$ be a fixed primitive Mukai vector, $M_{\sigma}(v)$ be the moduli space of $\sigma$-semistable sheaves on $A$ with the Mukai vector~$v$ and $\mathrm{Kum}_{\sigma}(v)$ be the fibre of the Albanese map 
\begin{align*}
M_{\sigma}(v) & \longrightarrow A \times A^{\vee}\\
\mathcal{F} &\longmapsto \left(\mathrm{Alb}(c_2(\mathcal{F})),\det(\mathcal{F})\right)
\end{align*}
\noindent over zero with $A^{\vee}\,=\,\mathrm{Pic}^0(A),$ where $\mathrm{Alb}:\mathrm{CH}_0(A) \to A$ is a summation map. Consider the set $W$ of elements $a \in H^*_{\text{alg}}(A,\mathbb{Z})$ such that~\mbox{$a^2 \geqslant 0$} and~\mbox{$0 < (v,a) \leqslant \frac{v^2}{2}$} with respect to the Mukai pairing, where  the element  $a$ is identified with its image under the isometry 
$$
\theta:v^{\perp} \xrightarrow{\sim} \mathrm{NS}(\mathrm{Kum}_{\sigma}(v))
$$
\noindent which is defined in~\cite[(1.6)]{YoshiokaAb}. Then the set of $a^{\perp}$ with $a \in W$ defines a chamber decomposition of the  positive cone of  $\mathrm{Kum}_{\sigma}(v).$ The ample cone~\mbox{$\mathrm{Amp}(\mathrm{Kum}_{\sigma}(v))$} is one of the chambers: $\eta \in \mathrm{Amp}(\mathrm{Kum}_{\sigma}(v))$ if and only if $(\eta,a)>0$ for all~\mbox{$a \in W.$}

\end{Th}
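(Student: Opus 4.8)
The plan is to adapt the Bridgeland-stability argument behind the $\mathrm{K3}$ case (Theorem~\ref{Theorem12.1}) to the abelian surface setting, and then to descend the resulting chamber structure from $M_\sigma(v)$ to the Albanese fibre $\mathrm{Kum}_\sigma(v)$. First I would work on the bounded derived category $D^b(A)$ and recall the description of the distinguished connected component $\mathrm{Stab}^\dagger(A)$ of the space of Bridgeland stability conditions for an abelian surface. The central tool is the Bayer--Macr\`{\i} map $\ell\colon \mathrm{Stab}^\dagger(A) \to \mathrm{NS}(M_\sigma(v))_{\mathbb{R}}$, sending a stability condition $\sigma$ to a class $\ell_\sigma$ whose pairing with a curve class records the $\sigma$-behaviour of the parametrized objects. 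The key input is the Positivity Lemma: $\ell_\sigma$ is always nef on $M_\sigma(v)$, and it is ample exactly when $\sigma$ lies in the interior of a chamber, i.e. when every $\sigma$-semistable object of Mukai vector $v$ is in fact $\sigma$-stable. This reduces the statement to identifying the walls in $\mathrm{Stab}^\dagger(A)$ and computing their images under $\ell$.

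Next I would analyse the wall-and-chamber structure. A wall in $\mathrm{Stab}^\dagger(A)$ is a locus where some $F$ with $v(F)=v$ acquires a subobject $F_1$ of the same phase; writing $a = v(F_1)$ and projecting into $v^\perp$, the requirement that both $F_1$ and the quotient be semistable forces, on an abelian surface, the inequality $a^2 \geqslant 0$. This is precisely where abelian surfaces diverge from $\mathrm{K3}$ surfaces: since $\chi(\mathcal{O}_A)=0$ there are no spherical objects, so the destabilizing classes never reach $a^2 = -2$ and the lower bound becomes $a^2 \geqslant 0$ rather than $a^2 \geqslant -2$. The bounds $0 < (v,a) \leqslant v^2/2$ then delimit the range of admissible sub-Mukai-vectors, the upper bound reflecting the symmetry $a \leftrightarrow v-a$ that exchanges subobject and quotient, and the strict lower bound $(v,a)>0$ excluding the degenerate case. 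Transporting these inequalities through the Yoshioka isometry $\theta\colon v^\perp \xrightarrow{\sim} \mathrm{NS}(\mathrm{Kum}_\sigma(v))$, I would show that the image of each wall is the hyperplane $a^\perp$, so that the chambers in stability space map onto the chambers cut out by $\{a^\perp : a \in W\}$ in the positive cone.

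Finally I would descend to $\mathrm{Kum}_\sigma(v)$. Because the isometry $\theta$ is stated directly with target $\mathrm{NS}(\mathrm{Kum}_\sigma(v))$, restriction of nef and ample classes along the Albanese fibre is compatible with the wall structure, and the ample cone of $\mathrm{Kum}_\sigma(v)$ is the image of the chamber containing the starting stability condition; consequently $\eta$ is ample if and only if it pairs positively with every wall class, that is $(\eta,a)>0$ for all $a \in W$.

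I expect the main obstacle to be the precise wall-crossing bookkeeping: proving that the list $\{a^\perp : a \in W\}$ is neither over- nor under-counting, i.e. that every genuine wall is realized by an actually destabilizing object and, conversely, that every $a \in W$ produces a wall. This rests on existence results for $\sigma$-stable objects of the relevant Mukai vectors on abelian surfaces (again using $a^2 \geqslant 0$) together with the classification of totally semistable walls, and it is there that the bulk of the technical work lies.
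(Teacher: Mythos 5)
The paper itself offers no proof of this statement: it is quoted as background directly from Yoshioka's paper \cite{Yoshioka} (which builds on Bayer--Macr\`{\i}'s machinery), and the whole point of the paper is to give an alternative, elementary description that bypasses exactly this technology. So there is nothing in the paper to compare your argument against line by line; what I can say is that your outline reconstructs the strategy of the cited source essentially correctly: the Bayer--Macr\`{\i} class $\ell_\sigma$, the Positivity Lemma, the identification of walls in $\mathrm{Stab}^\dagger(A)$ with classes $a$ subject to the stated inequalities, and the correct explanation of why the bound is $a^2\geqslant 0$ rather than $a^2\geqslant -2$ (no spherical objects on an abelian surface: $\chi(\mathcal{O}_A)=0$ forces $\mathrm{ext}^1(E,E)\geqslant 2$, i.e. $v(E)^2\geqslant 0$, for every simple object $E$).

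As a proof, however, your text has a genuine gap at the descent step, and it is precisely the step where the abelian case differs most from the $\mathrm{K3}$ case. For an abelian surface, $M_\sigma(v)$ is \emph{not} an irreducible holomorphic symplectic manifold: it fibres over $A\times A^\vee$ with fibre $\mathrm{Kum}_\sigma(v)$, its N\'eron--Severi group is strictly larger than $\theta(v^\perp)$, and nefness or ampleness of $\ell_\sigma$ on $M_\sigma(v)$ does not formally transfer to a statement about the positive cone of the Albanese fibre. One needs the finite \'etale cover $\mathrm{Kum}_\sigma(v)\times A\times A^\vee \to M_\sigma(v)$, $(F,x,L)\mapsto T_x^*F\otimes L$, or equivalent input, to relate curves and divisors on the two spaces, together with an argument that the walls in $\mathrm{Stab}^\dagger(A)$ cut out exactly the hyperplanes $a^\perp$ in $\mathrm{NS}(\mathrm{Kum}_\sigma(v))$ and that this family of hyperplanes is locally finite in the positive cone; none of this is supplied by the sentence ``restriction \ldots is compatible with the wall structure.'' You do flag the over/under-counting of walls (existence of $\sigma$-stable objects of class $a$, classification of totally semistable walls) as the remaining technical bulk, and that is accurate for the cited literature, but the Albanese-fibre descent must be added to that list as a separate, non-trivial ingredient rather than a formality.
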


\begin{Remark}[{cf.~\cite{Beauville, Knutsen}}]
If $S$ is a $\mathrm{K3}$ surface, then  the Mukai vector $v=(1,0,0)$ defines the moduli space $M_{\sigma}(v) \simeq S$ for any $\sigma.$  If $A$ is a complex two-dimensional torus, then  the Mukai vector $v=(1,0,-2)$ defines the Kummer surface $\mathrm{Kum}_{\sigma}(v)$  isomorphic to  the blowup of the quotient of~$A$ by the natural involution for any $\sigma$.  

For a $\mathrm{K3}$ surface $S$ and Mukai vector $v=(1,0,-n+1)$ the moduli space $M_{\sigma}(v)$ is isomorphic to the Hilbert scheme of $n$ points $\mathrm{Hilb}^n(S)$ for any $\sigma.$ For a complex two-dimensional torus~$A$ and Mukai vector $v=(1,0,-n-1)$ the variety~$\mathrm{Kum}_{\sigma}(v)$ is isomorphic to the Kummer variety~\mbox{$\mathrm{Kum}^n(A)$} for any $\sigma.$ Recall that the Kummer variety for the two-dimensional torus $A$ is the fibre over $0 \in A$ of the summation map $\mathrm{Hilb}^{n+1}(A) \to A.$

\end{Remark}

All these theorems work for projective surfaces. In the non-algebraic case the ample cone is empty. However, the K\"{a}hler cone does always exist for irreducible holomorphic symplectic manifolds  and we have $Amp(X)\,=\,\mathcal{K}(X) \cap \mathrm{NS}(X)_{\mathbb{R}},$ where~$Amp(X)$ is the ample cone, $\mathcal{K}(X)$ is the K\"{a}hler cone and $\mathrm{NS}(X)$ is the N\'{e}ron-Severi group. The K\"{a}hler cone of irreducible holomorphic symplectic manifolds was studied by E.~Amerik, D.~Huybrechts, E.~Markman, M.~Verbitsky, and others (see, for instance,~\cite{AV, GJH, Markman}).

The structure of the ample cone for the Hilbert scheme of points on $\mathrm{K3}$ surface was originally conjectured in the papers of B.~Hassett and Y.~Tschinkel (for instance, see~\cite{Tschinkel}). A refinement of their conjecture is established by A.~Bayer and E.~Macr\`{\i}. In the paper of E.~Amerik and M.~Verbitsky~\cite{AV} there is more elementary description of the boundary of the K\"{a}hler cone for $\mathrm{K3}$ type manifolds  (i.e. deformation of the  Hilbert scheme of points on a $\mathrm{K3}$ surface) for small dimensions. This means that they describe MBM classes, i.e. the classes which correspond to the extremal rays of the Mori cone on some deformation of the variety (see Section \ref{SectionPreliminaries}). The point is that these classes are easy to characterise numerically without appealing to Bayer-Macr\`{\i}'s machinery. In this paper we try to provide a similar elementary description of the K\"{a}hler cone for $\mathrm{K3}$ and Kummer types varieties of arbitrary dimension which are deformations of the Hilbert scheme of points on a $\mathrm{K3}$ surface or Kummer variety, respectively.  Recall the definition.

\begin{definition}
Let $X$ be a $2n$-dimensional IHSM. We call it a $\mathrm{K3}$ type manifold if~$X$ is deformation equivalent to $\mathrm{Hilb}^n(S)$ for some $\mathrm{K3}$ surface $S.$ We call it a Kummer type manifold if~$X$ is deformation equivalent to $\mathrm{Kum}^n(A)$ for some complex two-dimensional  torus~$A.$ 
\end{definition}

So the goal of this paper is to give a simple and self-contained proof of the following theorems about the walls of the K\"{a}hler cone. Before we state these theorems, let us give some definitions. Let $X$ be an IHSM. Let $\alpha \in H^2(X,\mathbb{Z})$ be a primitive class. Let $q$ be the Beauville-Bogomolov form on~$X.$ We call the number 
$$
d(\alpha)=\min\{d>0 \mid q(\alpha,\beta)=d \; \text{for} \; \beta \in H^2(X,\mathbb{Z})  \}
$$
\noindent the \emph{divisibility} of $\alpha.$ By $\delta(\alpha)$ we denote the image of $\alpha/d(\alpha)$ by the natural homomorphism 
$$
H^2(X,\mathbb{Z})\otimes\mathbb{Q} \to H^2(X,\mathbb{Z})^*/H^2(X,\mathbb{Z}),
$$
\noindent  where the quotient $ H^2(X,\mathbb{Z})^*/H^2(X,\mathbb{Z})$  is the discriminant group (see Definition~\ref{definition:divisib}).

\begin{definition}
Let $X=\mathrm{Hilb}^n(B)$ (respectively, $X=\mathrm{Kum}^n(B)$) for a~$\mathrm{K3}$ surface~$B$ (respectively, for a complex two-dimensional  torus $B$). Let $\Delta$ be the exceptional divisor of the Hilbert--Chow morphism 
$$
\mathrm{Hilb}^n(B) \to \mathrm{Sym}^n(B).
$$
\noindent Then we call the divisor $\Delta/2$ (respectively, $\Delta \cap \mathrm{Kum}^{n-1}(B)/2$) \textit{half of the exceptional divisor.}

\end{definition}

\begin{Th}\label{TheoremK3}
Let $X$ be an IHSM of $\mathrm{K3}$ type of dimension $2n.$ Let $\alpha \in H^2(X,\mathbb{Z})$ be a primitive class and denote by~\mbox{$\widehat{\alpha} \in H_2(X,\mathbb{Z})$} the dual homology class of $\alpha.$ Denote by $q$ the Beauville--Bogomolov form on the lattice $H^2(X,\mathbb{Z})$ and its extension form on the dual lattice $H_2(X,\mathbb{Z})\hookrightarrow H^2(X,\mathbb{Z}) \otimes \mathbb{Q}.$ Let $d(\alpha)$ be the divisibility of $\alpha$ and~$\delta(\alpha)$ be the image of $\alpha/d(\alpha)$ in the discriminant group of the lattice~$H^2(X,\mathbb{Z}).$  Then $\alpha$ is MBM if and only if  there is an integer 
\begin{equation}\label{eq:bin0,n-1K3}
b \in [0,n-1]
\end{equation}
\noindent such that  
\begin{equation}\label{eq:q(alpha)K3}
q(\widehat{\alpha})=2a-\frac{b^2}{2(n-1)},
\end{equation}
\noindent  where $a$ is an integer number satisfied the inequalities
\begin{equation}\label{eq:-2<2a<bK3}
 -2 \leqslant 2a<\frac{b^2}{2(n-1)},
\end{equation}
\noindent and $\pm \delta(\alpha)=b.$ Moreover, the integer numbers $a$ and $b,$ satisfying  \eqref{eq:bin0,n-1K3}, \eqref{eq:q(alpha)K3} and~\eqref{eq:-2<2a<bK3}, uniquely determine the monodromy orbit of $\alpha.$

\end{Th}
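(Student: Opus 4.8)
The plan is to reduce the statement to a lattice computation inside the Markman--Mukai lattice and then to read off the walls of the K\"ahler cone from Bayer--Macr\`i's Theorem~\ref{Theorem12.1}. Recall that for a $\mathrm{K3}$ type manifold $X$ of dimension $2n$ the lattice $H^2(X,\mathbb{Z})$ embeds isometrically as $v^{\perp}$ in the unimodular Markman--Mukai lattice $\widetilde{\Lambda}$ (isometric to the Mukai lattice of a $\mathrm{K3}$ surface), where $v$ is primitive with $v^2=2n-2$. Since $\widetilde{\Lambda}$ is unimodular, the dual lattice is $H_2(X,\mathbb{Z})=(v^{\perp})^{*}=\pi_{v^{\perp}}(\widetilde{\Lambda})$, where $\pi_{v^{\perp}}$ is orthogonal projection onto $v^{\perp}\otimes\mathbb{Q}$, so $\pi_{v^{\perp}}(u)=u-\frac{(v,u)}{v^2}v$ for $u\in\widetilde{\Lambda}$. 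The property of being MBM is invariant under monodromy and deformation, and by the monodromy classification for $\mathrm{K3}$ type it depends only on $q(\widehat{\alpha})$ together with the discriminant class $\pm\delta(\alpha)\in H^2(X,\mathbb{Z})^{*}/H^2(X,\mathbb{Z})\cong\mathbb{Z}/(2n-2)$. I may therefore test the condition on one convenient model: a moduli space $M_{\sigma}(v)$ on a projective $\mathrm{K3}$ surface $S$ with $v^2=2n-2$, where $\mathrm{NS}(M_{\sigma}(v))\cong v^{\perp}$ via $\theta$, and where I use the relation $\widehat{\alpha}=\alpha/d(\alpha)$ between a primitive divisor class and its primitive dual curve class.

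First I would prove necessity. Let $\alpha$ be primitive MBM; after a monodromy transformation its monodromy orbit meets the set of wall classes of $M_{\sigma}(v)$, so the ray $\mathbb{R}_{\geqslant 0}\widehat{\alpha}$ spans a wall of its nef cone. By Theorem~\ref{Theorem12.1} this wall is cut out by some $u\in W$, and choosing $u$ with primitive projection and normalising $0\leqslant (v,u)\leqslant\frac{v^2}{2}$ gives $\widehat{\alpha}=\pi_{v^{\perp}}(u)$. Writing $b=(v,u)$ and $u^2=2a$ (even, so $a\in\mathbb{Z}$), the isometry properties of $\theta$ and of the projection yield
\begin{equation*}
q(\widehat{\alpha})=\pi_{v^{\perp}}(u)^2=u^2-\frac{(v,u)^2}{v^2}=2a-\frac{b^2}{2(n-1)},
\end{equation*}
which is~\eqref{eq:q(alpha)K3}. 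The normalisation $0\leqslant b\leqslant\frac{v^2}{2}=n-1$ is exactly~\eqref{eq:bin0,n-1K3}. The bound $u^2\geqslant -2$ from the definition of $W$ reads $2a\geqslant -2$, and since every MBM class is negative, $q(\widehat{\alpha})<0$ forces $2a<\frac{b^2}{2(n-1)}$; together these give~\eqref{eq:-2<2a<bK3}. Finally, identifying the discriminant form of $v^{\perp}$ with that of $\mathbb{Z}v$ inside the unimodular lattice $\widetilde{\Lambda}$ sends $\pi_{v^{\perp}}(u)$ to $\frac{(v,u)}{v^2}v$, whose class in $\mathbb{Z}/(2n-2)$ is $b$; hence $\pm\delta(\alpha)=b$.

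For sufficiency I would run this backwards. Given integers $a,b$ as in~\eqref{eq:bin0,n-1K3}--\eqref{eq:-2<2a<bK3}, the lattice $\widetilde{\Lambda}$ is indefinite of large rank, so I can choose a projective $\mathrm{K3}$ surface $S$ and vectors $v,u\in H^{*}_{\text{alg}}(S,\mathbb{Z})$ with $v$ primitive, $v^2=2n-2$, $u^2=2a$, $(v,u)=b$, and $\pi_{v^{\perp}}(u)$ primitive; then $u\in W$, so $\pi_{v^{\perp}}(u)$ is an extremal curve class giving a wall of $\mathrm{NS}(M_{\sigma}(v))$, i.e.\ an MBM class with dual-square $2a-\frac{b^2}{2(n-1)}$ and $\pm\delta=b$. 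By the monodromy classification these invariants determine a single monodromy orbit, so any primitive $\alpha$ with $q(\widehat{\alpha})=2a-\frac{b^2}{2(n-1)}$ and $\pm\delta(\alpha)=b$ is monodromy-equivalent to it and hence MBM. The same classification gives the final assertion that $(a,b)$ determine the orbit.

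The main obstacle is the precise bookkeeping turning the Bayer--Macr\`i wall datum $u\in W$ into the intrinsic invariants of the primitive class $\alpha$. Two points need care: establishing $H_2(X,\mathbb{Z})=\pi_{v^{\perp}}(\widetilde{\Lambda})$ together with the identity $\widehat{\alpha}=\pi_{v^{\perp}}(u)$ (which rests on the unimodularity of $\widetilde{\Lambda}$ and on $\widehat{\alpha}=\alpha/d(\alpha)$), and controlling primitivity so that $q(\widehat{\alpha})$ is computed from a genuinely primitive projection and $\delta(\alpha)$ equals $b$ rather than a proper multiple. I also expect that closing the equivalence cleanly will require the deformation and monodromy invariance of the MBM property to pass between the abstract class $\alpha$ and the wall it produces on the auxiliary moduli space $M_{\sigma}(v)$, together with surjectivity of the period map to realise every admissible pair $(a,b)$.
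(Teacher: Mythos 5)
Your strategy runs the paper's Proposition \ref{BMeqAV} in reverse: you want to deduce Theorem \ref{TheoremK3} from Bayer--Macr\`i's Theorem \ref{Theorem12.1}, whereas the paper proves Theorem \ref{TheoremK3} by elementary means (Torelli to deform to $\mathrm{Hilb}^n(S)$ with $\mathrm{Pic}(S)$ cyclic, the Eichler-type criterion of Lemma \ref{GritsenkoK3A} to normalize the class, explicit rational curves built from linear systems $g^1_k$ via Lemma \ref{RHK3}, and the dimension count of MBM loci in Lemma \ref{lemma:K3minimalcurves}) and only afterwards derives the dictionary with the Bayer--Macr\`i walls. The genuine gap is the step you treat as automatic: ``$\alpha$ is MBM if and only if its orthogonal is one of the walls $\theta(u)^{\perp}$, $u \in W$.'' This equivalence is \emph{not} a consequence of Theorem \ref{Theorem12.1} as quoted, which characterizes the ample cone of the single model $M_{\sigma}(v)$ as one chamber of the arrangement cut out by $W$. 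An MBM class, by Definition \ref{MBM1401}, is one whose orthogonal supports, up to monodromy, a face of the K\"ahler cone of \emph{some birational model}; such a class need not cut a face of $\mathrm{Nef}(M_{\sigma}(v))$ itself. To conclude that it still lies on a $W$-wall you need that every birational model is again a moduli space $M_{\sigma'}(v)$ governed by the same set $W$, and that the $W$-wall arrangement is monodromy-invariant --- i.e.\ the wall-crossing part of Bayer--Macr\`i's theory, which you neither prove nor can quote from this paper without circularity (the paper's own proof of this dictionary, Proposition \ref{BMeqAV}, uses Theorem \ref{TheoremK3}). Conversely, for sufficiency you need that every $u \in W$ cuts a hyperplane that actually supports a face of the nef cone of some birational model: the quoted statement permits redundant inequalities, so ``$u \in W$'' alone does not make the primitive class on the ray $\mathbb{Q}\,\pi_{v^{\perp}}(u) \cap H_2(X,\mathbb{Z})$ MBM. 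In the paper, this direction is exactly what the geometric construction (a minimal rational curve whose MBM locus is a rational quotient of the right codimension) establishes; your proposal has no substitute for it.

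A secondary, unresolved issue is the primitivity bookkeeping you flag as the ``main obstacle'': a wall class $u \in W$ can have non-primitive projection $\pi_{v^{\perp}}(u) = k\widehat{\alpha}$ with $k>1$, in which case both the formula $q(\widehat{\alpha}) = 2a - \tfrac{b^2}{2(n-1)}$ and the identification $\pm\delta(\alpha)=b$ acquire factors of $k$, and the inequalities $-2 \leqslant 2a$, $0 \leqslant b \leqslant n-1$ are no longer the ones satisfied by the invariants of $\widehat{\alpha}$. The paper handles precisely this by normalizing within the monodromy orbit first (Lemma \ref{lemmaAVK3}: $\gamma(\alpha) = rx - be$ with $r$ dividing $2(n-1)$ and $\gcd(r,b)=1$) and, in Proposition \ref{BMeqAV}, by the explicit scaling factor $s$ with $s\gamma(\alpha)=tx-be$. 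Your closing uniqueness argument is sound in outline --- since $\delta(\alpha)$ has order exactly $d(\alpha)$ in $\mathbb{Z}/(2n-2)\mathbb{Z}$, the pair $(a,b)$ determines $q(\alpha)$ and $\pm\delta(\alpha)$, so Lemma \ref{GritsenkoK3A} applies --- but as written it too presupposes the unestablished equivalence above, so the proposal as a whole does not yet constitute a proof.
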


In the proof of the theorem we obtain as a byproduct the following corollary:

\begin{Cor}\label{cor:TheoremK3}
Under the conditions of Theorem \ref{TheoremK3} we have:

\begin{enumerate}
\renewcommand\labelenumi{\rm (\arabic{enumi})}
\renewcommand\theenumi{\rm (\arabic{enumi})}

\item\label{1} If $b=1$ and $a=0,$ then the cohomological class  $\alpha$ is the class of half of the exceptional divisor on some deformation $Y=\mathrm{Hilb}^n(S)$ of $X$ with Picard group  
$$
\mathrm{Pic}(Y)=\mathrm{Pic}(S)\oplus \mathbb{Z}e,
$$
\noindent  where~$S$ is a $\mathrm{K3}$ surface and $e$ is a class of  half of exceptional divisor on $Y$.

\item\label{2} If $(a,b) \neq (0,1),$ then  the cohomological class $\alpha$ up to a rational multiple can be represented by
$$
2(n-1) \cdot x-be
$$
\noindent on some deformation $Y=\mathrm{Hilb}^n(S)$ of $X$ with Picard group 
$$
\mathrm{Pic}(Y)=\mathrm{Pic}(S) \oplus \mathbb{Z}e,
$$
\noindent such that $\mathrm{Pic}(S)=\mathbb{Z}x,$  $x^2=2a\geqslant -2$ and $e$ is a class of  half of the exceptional divisor on $Y.$

\end{enumerate}

\end{Cor}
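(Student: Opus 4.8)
The plan is to convert the abstract numerical data $(a,b)$ furnished by Theorem~\ref{TheoremK3} into an explicit vector in a concrete rank-two Picard lattice, and then to invoke the last assertion of that theorem — that the pair $(a,b)$ determines the monodromy orbit — in order to transport the given $\alpha$ onto this model. First I would fix a $\mathrm{K3}$ surface $S$ with cyclic Picard group $\mathrm{Pic}(S)=\mathbb{Z}x$ and $x^{2}=2a$; since $2a\geqslant -2$ by \eqref{eq:-2<2a<bK3}, the surjectivity of the period map for $\mathrm{K3}$ surfaces guarantees the existence of such an $S$ (which need not be projective when $a=-1$). Setting $Y=\mathrm{Hilb}^{n}(S)$ yields a $\mathrm{K3}$ type manifold deformation equivalent to $X$, with $H^{2}(Y,\mathbb{Z})=H^{2}(S,\mathbb{Z})\oplus\mathbb{Z}e$, where $e$ is half of the exceptional divisor, $q(e)=-2(n-1)$ and $q(x,e)=0$; in particular $\mathrm{Pic}(Y)=\mathbb{Z}x\oplus\mathbb{Z}e$.

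The arithmetic core is the computation of the three invariants attached to the candidate class $D=2(n-1)x-be$. Using that $H^{2}(S,\mathbb{Z})=U^{\oplus3}\oplus E_{8}(-1)^{\oplus2}$ is unimodular and that $x$ is primitive there, the values $q\bigl(D,\ \beta_{S}+ce\bigr)=2(n-1)\bigl(q(x,\beta_{S})+bc\bigr)$ range, as $\beta_{S}\in H^{2}(S,\mathbb{Z})$ and $c\in\mathbb{Z}$ vary, over all multiples of $2(n-1)$, so the divisibility of the primitive vector proportional to $D$ equals $2(n-1)/\gcd(2(n-1),b)$. Consequently the dual homology class of that primitive representative is $\widehat{D}=D/2(n-1)=x-\tfrac{b}{2(n-1)}e$, whence
\begin{equation*}
q(\widehat{D})=q(x)+\frac{b^{2}}{4(n-1)^{2}}\,q(e)=2a-\frac{b^{2}}{2(n-1)},
\end{equation*}
matching \eqref{eq:q(alpha)K3}, while $\delta(D)=\pm b$ because $x$ is integral and $e/2(n-1)$ generates the discriminant group $\mathbb{Z}/2(n-1)$. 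All three quantities are insensitive to rescaling $D$, which is exactly why the statement must be phrased up to a rational multiple.

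For statement \ref{1}, the case $b=1,\ a=0$, I would compute directly with the half exceptional divisor $e$ itself: it has divisibility $2(n-1)$, dual class $\widehat{e}=e/2(n-1)$ with $q(\widehat{e})=-\tfrac{1}{2(n-1)}$, and $\delta(e)=\pm1$, i.e.\ precisely the invariants of $(a,b)=(0,1)$. Hence $\alpha$ lies in the monodromy orbit of $e$, so after a monodromy transformation it becomes the class of half of the exceptional divisor on $Y$. Note that when $a=0$ the vector $2(n-1)x-e$ shares these invariants and is therefore monodromy-equivalent to $e$ through the Eichler transvection along the isotropic class $x$; this explains why $(0,1)$ is singled out and realized by the cleaner representative $e$. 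For statement \ref{2}, once the invariants of $D=2(n-1)x-be$ have been matched to those of $\alpha$, the uniqueness of the monodromy orbit in Theorem~\ref{TheoremK3} forces $\alpha$ to be, up to monodromy and a rational scalar, the class $2(n-1)x-be$ on $Y$.

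I expect the main obstacle to be the realization step rather than the lattice bookkeeping: one must ensure that every monodromy orbit with the prescribed invariants is genuinely attained on a Hilbert scheme model $\mathrm{Hilb}^{n}(S)$ carrying exactly the split Picard lattice $\mathrm{Pic}(S)\oplus\mathbb{Z}e$, and that the chosen deformation from $X$ to this $Y$ keeps $\alpha$ of type $(1,1)$. This rests on combining the surjectivity of the $\mathrm{K3}$ period map with the compatibility of the Hilbert--Chow construction with the Beauville--Bogomolov form, and on the transitivity of the $\mathrm{K3}$ type monodromy group on primitive classes with fixed $\bigl(q(\widehat{\alpha}),\delta\bigr)$ — the input already packaged in the final sentence of Theorem~\ref{TheoremK3}.
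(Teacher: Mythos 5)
Your proposal is correct and follows essentially the same route as the paper: the paper's own proof of this corollary consists precisely of matching the invariants $\bigl(q(\widehat{\,\cdot\,}),\pm\delta\bigr)$ of $\alpha$ against those of $e$ (for assertion \ref{1}) or of the normalized class $rx-be$ on a Hilbert-scheme model (for assertion \ref{2}), and then invoking Lemma~\ref{GritsenkoK3A} together with the Torelli-based realization of monodromy orbits on deformations (Corollaries~\ref{TorelliCor} and~\ref{Corollary:deformation}), which is exactly your invariant computation plus the uniqueness clause of Theorem~\ref{TheoremK3}. The only difference is organizational — the paper runs the argument forward (deform $X$ and normalize $\gamma(\alpha)$ via Lemma~\ref{lemmaAVK3}), while you construct the target model $\mathrm{Hilb}^n(S)$ first and match invariants backward — but the key lemma and the realization machinery are identical.
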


\begin{Th}\label{TheoremKummer}
Let $X$ be an IHSM of Kummer type of dimension $2n\geqslant 4$. Assume that~\mbox{$\alpha \in H^2(X,\mathbb{Z})$} is a primitive class and denote by $\widehat{\alpha} \in H_2(X,\mathbb{Z})$ the dual homological class of $\alpha.$ Denote by $q$ the Beauville--Bogomolov form on the lattice~$H^2(X,\mathbb{Z})$ and its extension form on the dual lattice~$H_2(X,\mathbb{Z})\hookrightarrow H^2(X,\mathbb{Z}) \otimes \mathbb{Q}.$ Let $d(\alpha)$ be the divisibility of $\alpha$ and~$\delta(\alpha)$ be the image of~$\alpha/d(\alpha)$ in the discriminant group of the lattice $H^2(X,\mathbb{Z}).$   Then $\alpha$ is MBM if and only if  there is an integer number
\begin{equation}\label{eq:bin0,n-1Kummer}
b \in [1,n+1]
\end{equation}
\noindent such that 
\begin{equation}\label{eq:q(alpha)Kummer}
q(\widehat{\alpha})=2a-\frac{b^2}{2(n+1)},
\end{equation}
\noindent where~$a$ is an integer number satisfying the inequalities
\begin{equation}\label{eq:-2<2a<bKummer}
 0 \leqslant 2a<\frac{b^2}{2(n+1)},
\end{equation}
\noindent and $\pm \delta(\alpha)=b.$ Moreover, the integer numbers $a$ and $b,$ satisfying  \eqref{eq:bin0,n-1Kummer}, \eqref{eq:q(alpha)Kummer} and~\eqref{eq:-2<2a<bKummer}, uniquely determine the monodromy orbit of $\alpha.$

\end{Th}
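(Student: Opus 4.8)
The plan is to run the same strategy as in the proof of Theorem~\ref{TheoremK3}, replacing the K3 lattice by the Kummer lattice $H^2(X,\mathbb{Z})\cong U^{\oplus 3}\oplus\langle-2(n+1)\rangle$ and replacing Theorem~\ref{Theorem12.1} by Theorem~\ref{Theorem12.1Kummer}. The MBM property, the value $q(\widehat{\alpha})$ and the class $\delta(\alpha)$ are all invariant under monodromy and deformation, so by the Amerik--Verbitsky theory it suffices to decide, for each primitive monodromy orbit, whether it contains a class whose orthogonal hyperplane supports a wall of the ample cone of some projective model. Every projective Kummer-type $2n$-fold of the relevant Picard rank is deformation equivalent to a moduli space $\mathrm{Kum}_{\sigma}(v)$ with $v$ primitive and $v^2=2(n+1)$, and all such $v$ lie in one monodromy orbit, so I may fix $v=(1,0,-n-1)$ and read the walls off Theorem~\ref{Theorem12.1Kummer}.

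First I would translate the Bayer--Macr\`{\i} conditions into the Beauville--Bogomolov form. A class $w\in H^*_{\text{alg}}(A,\mathbb{Z})$ in the set $W$ of Theorem~\ref{Theorem12.1Kummer} (there denoted $a$) determines the wall $\theta(\mathrm{pr}_{v^\perp}w)^{\perp}$ in $\mathrm{NS}(\mathrm{Kum}_{\sigma}(v))$, where $\mathrm{pr}_{v^\perp}w=w-\tfrac{(v,w)}{2(n+1)}v$; the associated extremal curve class $\widehat{\alpha}$ is Beauville--Bogomolov dual to $\theta(\mathrm{pr}_{v^\perp}w)$. Since $\theta$ is an isometry and the extension of $q$ to $H_2(X,\mathbb{Z})\hookrightarrow H^2(X,\mathbb{Z})\otimes\mathbb{Q}$ is an isometry by construction,
\begin{equation*}
q(\widehat{\alpha})=(\mathrm{pr}_{v^\perp}w,\mathrm{pr}_{v^\perp}w)=w^2-\frac{(v,w)^2}{2(n+1)}.
\end{equation*}
Writing $2a:=w^2$ (even, since $\mathrm{NS}(A)$ is even) and $b:=(v,w)$, the constraints $w^2\geqslant0$ and $0<(v,w)\leqslant\tfrac{v^2}{2}=n+1$ become exactly $2a\geqslant0$ and $b\in[1,n+1]$, while the requirement that $\widehat{\alpha}^{\perp}$ cut the positive cone, i.e. $q(\widehat{\alpha})<0$, becomes $2a<\tfrac{b^2}{2(n+1)}$. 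This yields \eqref{eq:q(alpha)Kummer} and \eqref{eq:-2<2a<bKummer}; together with the realizability of any MBM wall on some such model, it gives the forward implication.

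For the reverse implication I would exhibit an explicit model. Given $(a,b)$ satisfying \eqref{eq:bin0,n-1Kummer}--\eqref{eq:-2<2a<bKummer}, choose a two-dimensional torus $A$ with $\mathrm{NS}(A)=\mathbb{Z}H$ and $H^2=2a$, and form $X=\mathrm{Kum}^n(A)$ with $\mathrm{NS}(X)=\mathbb{Z}H\oplus\mathbb{Z}e$, where $e$ is half of the exceptional divisor and $q(e)=-2(n+1)$. Under the standard normalisation one identifies $H=\theta(0,H,0)$ and $e=\theta(1,0,n+1)$, both of which lie in $v^{\perp}$. Taking $w=(0,H,-b)$ I would check $w^2=2a\geqslant0$ and $(v,w)=b\in[1,n+1]$, so $w\in W$, and a direct computation gives $\mathrm{pr}_{v^\perp}w=\bigl(-\tfrac{b}{2(n+1)},H,-\tfrac{b}{2}\bigr)=\theta^{-1}\bigl(H-\tfrac{b}{2(n+1)}e\bigr)$. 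Hence the wall dual to $w$ is orthogonal to $\widehat{\alpha}=H-\tfrac{b}{2(n+1)}e$, the primitive class $\alpha$ is proportional to $2(n+1)H-be$, and the divisibility computation in the Kummer lattice gives $\pm\delta(\alpha)=b$ in the discriminant group $\mathbb{Z}/2(n+1)$. Since $q(\widehat{\alpha})<0$ by \eqref{eq:-2<2a<bKummer}, Theorem~\ref{Theorem12.1Kummer} shows $\widehat{\alpha}$ spans an extremal wall, so $\alpha$ is MBM.

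Finally, for the uniqueness statement I would invoke an Eichler-type description of monodromy orbits. The assignment $(a,b)\mapsto\bigl(q(\widehat{\alpha}),\pm\delta(\alpha)\bigr)=\bigl(2a-\tfrac{b^2}{2(n+1)},\pm b\bigr)$ is injective on the range \eqref{eq:bin0,n-1Kummer}--\eqref{eq:-2<2a<bKummer}, because $b\in[1,n+1]$ represents each nonzero element of $\mathbb{Z}/2(n+1)$ up to sign exactly once and then $a$ is recovered from $q(\widehat{\alpha})$. It therefore remains to see that a primitive class is determined up to monodromy by its square and by $\pm\delta$. This is where the Kummer case genuinely differs from the K3 case and is the step I expect to be the main obstacle: the monodromy group of a Kummer-type manifold is a proper subgroup of $O^+(H^2(X,\mathbb{Z}))$ (Markman, Mongardi), so the classical Eichler criterion does not apply verbatim, and one must verify that this subgroup still acts transitively on primitive vectors with fixed square and fixed image $\pm\delta$ in $\mathbb{Z}/2(n+1)$ --- in particular that it surjects onto $\{\pm1\}\subset\mathrm{Aut}(\mathbb{Z}/2(n+1))$ while producing no finer invariant. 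Granting this transitivity, the pair $(a,b)$ determines the monodromy orbit and the theorem follows.
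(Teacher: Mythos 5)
Your route is genuinely different from the paper's, and in fact partly inverted. The paper's proof is deliberately independent of the Bayer--Macr\`{\i}/Yoshioka machinery: it deforms $X$ inside $\mathrm{Teich}_{\alpha}$ to an actual $\mathrm{Kum}^n(A)$ via the Torelli theorem (Corollary~\ref{Corollary:deformation}), normalizes $\alpha$ by a monodromy operator to the shape $rx-be$ (Lemma~\ref{lemmaAVA}), and then produces the extremal rational curve by hand --- a $g^1_k$ on a curve $C\subset A$, Riemann--Hurwitz (Lemma~\ref{RHA}), a gonality bound, and an explicit MBM locus (a $\mathbb{P}^{k-g}$-bundle over a Debarre system) to verify minimality via the Amerik--Verbitsky rational-quotient criterion (Lemma~\ref{lemma:Kummerminimalcurves}). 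You instead read the wall data off Theorem~\ref{Theorem12.1Kummer}; this is essentially Proposition~\ref{BMeqAVabel} run backwards, whereas in the paper that proposition is a \emph{corollary} of Theorem~\ref{TheoremKummer}, not an ingredient. Your reduction is not logically circular, but it forfeits the paper's stated purpose (an elementary, self-contained description) and it silently uses two non-trivial inputs: (i) a Torelli/deformation step guaranteeing that an arbitrary MBM class, after a monodromy normalization, becomes algebraic on a model to which Yoshioka's theorem literally applies (this is the content of Corollary~\ref{Corollary:deformation} and Lemma~\ref{lemmaAVA}); and (ii) the fact that \emph{every} hyperplane $a^{\perp}$, $a\in W$, which meets the positive cone is an actual wall of the K\"{a}hler cone of some birational model rather than a redundant hyperplane of the decomposition --- the statement of Theorem~\ref{Theorem12.1Kummer} only identifies the ample chamber, so this step needs the Markman/Amerik--Verbitsky chamber theory on top of it. Both points are fillable, but as written they are asserted, not argued.

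The genuine gap is in the uniqueness clause. You correctly isolate the needed statement --- transitivity of $\mathrm{Mon}(X)$ on primitive vectors with fixed $q$ and fixed $\pm\delta$ --- correctly observe that it does not follow verbatim from Eichler's criterion because the Kummer-type monodromy group is a proper subgroup of $O^+(L)$, and then write ``granting this transitivity, \dots the theorem follows.'' That is exactly the point that cannot be granted: it is the crux of the ``moreover'' part, and the paper proves it as Lemma~\ref{GritsenkoK3A}. Concretely, one combines Theorem~\ref{Gritsenko} (Eichler-type transitivity for $\widetilde{SO^+(L)}$) with Mongardi's computation that $\mathrm{Mon}(X)$ consists of $\pi^{-1}(1)\cap SO^+(L)$ together with $\pi^{-1}(-1)\cap\bigl(O^+(L)\setminus SO^+(L)\bigr)$; the case $\delta(m)=-\delta(l)$ is then handled by composing with the reflection $\mathrm{R}_e$, which has determinant $-1$ and acts by $-1$ on the discriminant group, so that $\gamma\mathrm{R}_e$ lands in the allowed coset. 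Without this argument your proof establishes at most the numerical characterization of MBM classes, not that $(a,b)$ determines the monodromy orbit, so the proposal is incomplete at this essential step.
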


Again, as in the $\mathrm{K3}$ case, in the proof of this theorem we also obtain as a byproduct the following corollary:

\begin{Cor}\label{cor:TheoremKummer}

Under the conditions of Theorem \ref{TheoremKummer} we have:

\begin{enumerate}
\renewcommand\labelenumi{\rm (\arabic{enumi})}
\renewcommand\theenumi{\rm (\arabic{enumi})}

\item\label{1'} If $b=1$ and $a=0,$ then the cohomological class $\alpha$  is the class of  half of the exceptional divisor on some deformation $Y=\mathrm{Kum}^n(A)$ of $X$ with Picard group 
$$
\mathrm{Pic}(Y)=\mathrm{NS}(A)\oplus \mathbb{Z}e,
$$
\noindent where~$A$ is a complex two-dimensional torus and $e$ is  a class of   half of the exceptional divisor on~$Y$.

\item\label{2'} If $(a,b) \neq (0,1),$ then  the cohomological class $\alpha$ up to a rational multiple can be represented by
$$
2(n+1) \cdot x-be
$$
\noindent on some deformation $Y=\mathrm{Kum}^n(A)$ of $X$ with Picard group 
$$
\mathrm{Pic}(Y)=\mathrm{NS}(A) \oplus \mathbb{Z}e,
$$
\noindent  such that $\mathrm{NS}(A)=\mathbb{Z}x,$ $x^2=2a\geqslant 0$ and $e$ is a class of   half of the exceptional divisor on $Y$.

\end{enumerate}

\end{Cor}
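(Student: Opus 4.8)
The plan is to leverage the uniqueness clause of Theorem~\ref{TheoremKummer}: since the pair $(a,b)$ determines the monodromy orbit of a primitive MBM class, it suffices to produce, for each admissible $(a,b)$, one Kummer-type deformation $Y$ of $X$ together with an explicit class on $Y$ of the asserted shape whose invariants equal $(a,b)$. Every MBM class with those invariants is then a monodromy transform of this model, which is exactly what both assertions claim. Throughout I would use the standard description of the Beauville--Bogomolov lattice of a generalized Kummer variety: for $Y=\mathrm{Kum}^n(A)$ there is an orthogonal decomposition $H^2(Y,\mathbb{Z})=H^2(A,\mathbb{Z})\oplus\mathbb{Z}e$, where $H^2(A,\mathbb{Z})$ carries the unimodular cup-product form and $e$, the class of half of the exceptional divisor, satisfies $q(e)=-2(n+1)$. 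Consequently the discriminant group is cyclic of order $2(n+1)$, generated by the image of $e/2(n+1)$.

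First I would dispose of case~\ref{1'}. Here one simply takes $\alpha=e$ on an arbitrary $Y=\mathrm{Kum}^n(A)$. Since $q(e,H^2(A,\mathbb{Z}))=0$ and $q(e,e)=-2(n+1)$, the divisibility is $d(e)=2(n+1)$, so the dual class is $\widehat e=e/2(n+1)$ and $q(\widehat e)=q(e)/d(e)^2=-1/(2(n+1))$, matching~\eqref{eq:q(alpha)Kummer} with $a=0$; moreover $\delta(e)$ is the generator of the discriminant group, so $\pm\delta(e)=1=b$. Thus $e$ realizes $(a,b)=(0,1)$ and is by definition half of the exceptional divisor, as claimed.

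For case~\ref{2'} I would build the reference torus. Choose a complex two-dimensional torus $A$ with $\mathrm{NS}(A)=\mathbb{Z}x$ and $x^2=2a$: for $a>0$ this is a polarized abelian surface of Picard rank one with a polarization of the prescribed self-intersection, and for $a=0$ (which by~\eqref{eq:-2<2a<bKummer} forces $b\geqslant 2$) one takes a non-projective torus whose only $(1,1)$-class is isotropic; existence in both cases follows from surjectivity of the period map. On $Y=\mathrm{Kum}^n(A)$ we then have $\mathrm{Pic}(Y)=\mathbb{Z}x\oplus\mathbb{Z}e$, and I set $\beta=2(n+1)x-be$ with primitivization $\alpha_0=\beta/g$ for $g=\gcd(2(n+1),b)$. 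Writing $p=2(n+1)/g$ and $m=b/g$ (so $\gcd(p,m)=1$ and $\alpha_0=px-me$), a direct computation gives $q(\alpha_0)=2ap^2-2(n+1)m^2$ and, using the primitivity of $x$ in the unimodular lattice $H^2(A,\mathbb{Z})$, the divisibility $d(\alpha_0)=p$. Hence $q(\widehat{\alpha_0})=q(\alpha_0)/p^2=2a-b^2/(2(n+1))$, while $\alpha_0/p\equiv -b\,(e/2(n+1))$ in the discriminant group, i.e. $\pm\delta(\alpha_0)=b$. By Theorem~\ref{TheoremKummer} the class $\alpha_0$ is MBM with invariants $(a,b)$, and $\beta=g\,\alpha_0$ is precisely the asserted rational multiple $2(n+1)x-be$.

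Finally I would invoke the uniqueness part of Theorem~\ref{TheoremKummer}: the given MBM class $\alpha$ and the model class $\alpha_0$ share the invariants $(a,b)$ and therefore lie in a single monodromy orbit, so after applying a parallel-transport operator we may identify $\alpha$ with $\alpha_0$ on $Y$, which establishes~\ref{2'}. I expect the main obstacle to be the lattice bookkeeping in the third step---keeping track of the interaction between the divisibility $2(n+1)$ of $e$ and the integer $b$ through the common divisor $g$, so that the computed $q(\widehat{\alpha_0})$ and $\delta(\alpha_0)$ land exactly on the values prescribed by~\eqref{eq:q(alpha)Kummer} and~\eqref{eq:bin0,n-1Kummer}---together with the verification that the reference torus exists in the degenerate case $a=0$, where $A$ must be non-algebraic. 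Everything else is a formal consequence of the uniqueness already proved for the theorem.
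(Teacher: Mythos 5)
Your proposal is correct, and its computational core coincides with what the paper actually does; the difference is in packaging rather than in substance. The paper's own proof of this corollary is a one-line pointer back into the proof of Theorem~\ref{TheoremKummer}: assertion (1) is read off from Lemma~\ref{GritsenkoK3A} (monodromy orbits of primitive vectors are classified by $q$ and $\pm\delta$, so a class with invariants $(0,1)$ lies in the orbit of $e$), and assertion (2) from Lemma~\ref{lemma:Athenq>0} and Corollary~\ref{cor:q>2thusmbmKummer}, i.e.\ from the normalized representative $\gamma(\alpha)=rx-be$ produced in Step~2 (Lemma~\ref{lemmaAVA}) together with the Step~6 computation, which yields exactly your model class
$$
\alpha_0=\frac{2(n+1)}{\gcd(b,2(n+1))}\,x-\frac{b}{\gcd(b,2(n+1))}\,e .
$$
You instead treat the theorem as a black box: you build a reference Kummer variety, write down $\alpha_0$, verify $d(\alpha_0)$, $q(\widehat{\alpha_0})$ and $\pm\delta(\alpha_0)$ (these computations are correct and literally duplicate Step~6), and then invoke the ``if'' direction plus the uniqueness clause. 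What this buys is a proof that is a formal consequence of the \emph{statement} of Theorem~\ref{TheoremKummer}, rather than a byproduct of its \emph{proof}, which is a legitimate and arguably cleaner logical organization. The one point where your route needs an extra word is the construction of $Y=\mathrm{Kum}^n(A)$ abstractly, outside the Teichm\"{u}ller space of $X$: the uniqueness clause classifies monodromy orbits inside the fixed lattice $H^2(X,\mathbb{Z})$, so to compare the $\delta$-invariant of $\alpha$ with that of $\alpha_0\in H^2(Y,\mathbb{Z})$ you must choose a marking of $Y$ and know that it identifies the half-exceptional class $e_Y$ with a monodromy translate of $e$; this is true (markings are well defined up to $\mathrm{Mon}(X)$ by Verbitsky's theorem, and monodromy acts by $\pm1$ on the discriminant group), but it is precisely the issue the paper sidesteps by staying inside $\mathrm{Teich}_{\alpha}$ and producing $Y$ with prescribed Picard lattice via Corollary~\ref{TorelliCor} and Corollary~\ref{Corollary:deformation}. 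With that remark added, your argument is complete.
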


We also explain by direct computations how our results agree with that of A.~Bayer and E.~Macr\`{i}, and K.~Yoshioka.

\begin{Prop}\label{BMeqAV}
Let $X$ be a $\mathrm{K3}$ type manifold and $v$ be a primitive Mukai vector.  There is a one-to-one correspondence between  the set of orthogonal hyperplanes to MBM classes in the positive cone~\mbox{$\mathrm{Pos}(X),$} which were described in Theorem \ref{TheoremK3} and the set of walls in Theorem \ref{Theorem12.1}, given by
$$
a \in H^*_{alg}(X,\mathbb{Z}) \mapsto \mathrm{pr}_{v^{\perp}}(a)
$$
\noindent where $pr_{v^{\perp}}$ is a projection on $v^{\perp}.$

\end{Prop}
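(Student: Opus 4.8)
The plan is to transport the wall description of Theorem~\ref{Theorem12.1} through Yoshioka's isometry and check that it becomes, essentially verbatim, the MBM description of Theorem~\ref{TheoremK3}. Since MBM classes are invariant under deformation and monodromy, I may assume $X=M_\sigma(v)$. First I would record the basic geometry: for a primitive Mukai vector $v$ the moduli space $M_\sigma(v)$ is an IHSM of $\mathrm{K3}$ type of dimension $v^2+2$, so writing $\dim M_\sigma(v)=2n$ gives $v^2=2(n-1)$ and $\tfrac{v^2}{2}=n-1$; moreover Yoshioka's isometry $\theta\colon v^\perp\xrightarrow{\sim}\mathrm{NS}(M_\sigma(v))$ carries the Mukai form to the Beauville--Bogomolov form $q$, hence identifies the dual lattices $(v^\perp)^*$ and $\mathrm{NS}(M_\sigma(v))^*\cong H_2(M_\sigma(v),\mathbb{Z})$ together with the two discriminant groups $\mathbb{Z}/2(n-1)$. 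The map of the statement is already well defined on walls: for $\eta\in v^\perp\otimes\mathbb{R}$ and $a\in H^*_{\mathrm{alg}}(X,\mathbb{Z})$ one has $(\eta,a)=(\eta,\mathrm{pr}_{v^\perp}(a))$, since the $\langle v\rangle$-component of $a$ pairs trivially with $\eta$, so the hyperplane $a^\perp$ cut out inside $\mathrm{Pos}(M_\sigma(v))$ coincides with $\mathrm{pr}_{v^\perp}(a)^\perp$.

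Next I would set up the numerical dictionary. Put $b:=(v,a)$ and write $\mathrm{pr}_{v^\perp}(a)=a-\tfrac{b}{2(n-1)}v\in(v^\perp)^*$. A short computation gives
$$q\bigl(\mathrm{pr}_{v^\perp}(a)\bigr)=a^2-\frac{b^2}{2(n-1)}.$$
Writing the (even) Mukai square as $a^2=2a'$, this is exactly $2a'-\tfrac{b^2}{2(n-1)}$, the right-hand side of~\eqref{eq:q(alpha)K3}; under $\theta$ the class $\mathrm{pr}_{v^\perp}(a)$ is precisely the dual homology class $\widehat\alpha$ of the primitive MBM class $\alpha$ cutting out this wall, so that $q(\widehat\alpha)=2a'-\tfrac{b^2}{2(n-1)}$. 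For the discriminant data I would fix $w\in H^*_{\mathrm{alg}}(X,\mathbb{Z})$ with $(v,w)=1$ (possible since $v$ is primitive in the unimodular Mukai lattice); then $\mathrm{pr}_{v^\perp}(w)$ generates the discriminant group of $v^\perp$, and as $a-bw\in v^\perp$ is integral one gets $\mathrm{pr}_{v^\perp}(a)\equiv b\cdot\mathrm{pr}_{v^\perp}(w)\pmod{v^\perp}$. Thus $\delta(\alpha)$ is identified with $b\bmod 2(n-1)$, which is the requirement $\pm\delta(\alpha)=b$.

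Finally I would match the constraints and prove the two inclusions. Membership $a\in W$ reads $a^2\geqslant-2$ and $0\leqslant b\leqslant\tfrac{v^2}{2}=n-1$, i.e.\ $2a'\geqslant-2$ and $b\in[0,n-1]$, giving~\eqref{eq:bin0,n-1K3} and the lower bound in~\eqref{eq:-2<2a<bK3}; the remaining inequality $2a'<\tfrac{b^2}{2(n-1)}$ of~\eqref{eq:-2<2a<bK3} is equivalent to $q(\widehat\alpha)<0$, which is exactly the condition that $\mathrm{pr}_{v^\perp}(a)^\perp$ actually meets the positive cone (a hyperplane meets $\mathrm{Pos}$ iff its normal has negative square, the lattice having signature $(1,\rho-1)$). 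For the converse, given $(a',b)$ satisfying~\eqref{eq:bin0,n-1K3}--\eqref{eq:-2<2a<bK3} I would realise the rank-two lattice with Gram matrix $\left(\begin{smallmatrix}2(n-1)&b\\ b&2a'\end{smallmatrix}\right)$ as $\langle v,a\rangle$ inside the Mukai lattice, producing $a\in W$ with the prescribed invariants; this is the same rank-two computation underlying Corollary~\ref{cor:TheoremK3}, with $\langle v,a\rangle$ dual to the lattice $\langle x,e\rangle$ appearing there. Combining this with the final sentences of Theorems~\ref{TheoremK3} and~\ref{Theorem12.1}, which say that the pair $(a',b)$ determines the monodromy orbit and hence the wall, yields the asserted bijection. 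The main obstacle is the discriminant-group bookkeeping of the previous paragraph together with the converse existence step: one must verify that the redundancy in $W$ (many $a$ giving one wall) matches exactly the redundancy in the MBM parametrisation (the sign ambiguity $\pm\delta$ and the primitivity normalisation of $\widehat\alpha$), so that the induced map is a genuine bijection on hyperplanes rather than merely a surjection.
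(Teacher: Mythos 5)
Your proposal is correct and follows essentially the same route as the paper: both project $a$ onto $v^\perp$ (the paper's $z=v^2a-(v,a)v$ is just $v^2\,\mathrm{pr}_{v^\perp}(a)$), identify $b=(v,a)$ with the invariant $\pm\delta(\alpha)$, match $a^2\geqslant -2$ with $2a\geqslant -2$ via the square of the projection, and conversely manufacture an element of $W$ from the normalized MBM data. The differences are only presentational: the paper runs the computation in explicit Mukai coordinates $a=(u,\varkappa,s)$, $v=(1,0,1-n)$, $e=(1,0,n-1)$ and leans on the monodromy normalization of Lemma~\ref{lemmaAVK3}, while you argue invariantly (with the auxiliary vector $w$, $(v,w)=1$, for the discriminant bookkeeping) and are in fact more explicit than the paper about the condition $q(\mathrm{pr}_{v^\perp}(a))<0$, i.e.\ that the hyperplane actually meets $\mathrm{Pos}(X)$.
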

\begin{Prop}\label{BMeqAVabel}
Let $X$ be a Kummer type manifold and $v$ be a primitive Mukai vector. There is a one-to-one correspondence between  the set of orthogonal hyperplanes to MBM classes in the positive cone~\mbox{$\mathrm{Pos}(X),$} which were described in Theorem \ref{TheoremKummer} and the set of walls in Theorem \ref{Theorem12.1Kummer}, given by
$$
a \in H^*_{alg}(X,\mathbb{Z}) \mapsto \mathrm{pr}_{v^{\perp}}(a)
$$
\noindent where $pr_{v^{\perp}}$ is a projection on $v^{\perp}.$

\end{Prop}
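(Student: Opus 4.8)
The plan is to place both families of hyperplanes inside the single real space $(v^{\perp})_{\mathbb{R}} \cong \mathrm{NS}(X)_{\mathbb{R}}$ and to match them by an explicit lattice computation, with the orthogonal projection playing the structural role. Realize $X = \mathrm{Kum}_{\sigma}(v)$, so that $\dim X = v^2 - 2 = 2n$ forces $v^2 = 2(n+1)$, and recall that the Yoshioka isometry $\theta \colon v^{\perp} \xrightarrow{\sim} \mathrm{NS}(X)$ identifies $H^2(X,\mathbb{Z})$ with $v^{\perp}$ taken inside the full, unimodular Mukai lattice $H^{*}(A,\mathbb{Z}) \cong U^{\oplus 4}$, the algebraic part $\mathrm{NS}(X)$ corresponding to $v^{\perp} \cap H^{*}_{\mathrm{alg}}(A,\mathbb{Z})$. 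Under $\theta$ the dual lattice $H_2(X,\mathbb{Z}) \hookrightarrow H^2(X,\mathbb{Z}) \otimes \mathbb{Q}$ becomes $(v^{\perp})^{*}$, which is precisely where $\mathrm{pr}_{v^{\perp}}$ takes values since $(\mathrm{pr}_{v^{\perp}}(a), y) = (a,y) \in \mathbb{Z}$ for every $y \in v^{\perp}$; this is the reason the map of the proposition is the natural one.

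First I would check that the map is well defined on hyperplanes. For $x \in v^{\perp}$ one has $(a,x) = (\mathrm{pr}_{v^{\perp}}(a), x)$ because $(v,x)=0$, so the Yoshioka wall $a^{\perp}$, read inside $\mathrm{NS}(X)_{\mathbb{R}}$, is exactly $(\mathrm{pr}_{v^{\perp}}(a))^{\perp}$. Moreover $\mathrm{pr}_{v^{\perp}}(v-a) = -\mathrm{pr}_{v^{\perp}}(a)$ and $(v,v-a) = v^2 - (v,a)$, so $a$ and $v-a$ define the same hyperplane; this is the symmetry that lets one normalise $0 < (v,a) \leq v^2/2 = n+1$, and it will correspond to the sign ambiguity $\pm\delta(\alpha)$.

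The heart of the argument is the numerical matching. Put $b := (v,a) \in \{1,\dots,n+1\}$, and use that the Mukai form on $H^{*}_{\mathrm{alg}}(A,\mathbb{Z})$ is even to write $a^2 = 2a_0$ with $a_0 \geq 0$ an integer, the inequality coming from $a^2 \geq 0$. The projection formula gives
\begin{equation*}
\mathrm{pr}_{v^{\perp}}(a)^2 \,=\, a^2 - \frac{(v,a)^2}{v^2} \,=\, 2a_0 - \frac{b^2}{2(n+1)},
\end{equation*}
which is precisely the value $q(\widehat{\alpha})$ prescribed by \eqref{eq:q(alpha)Kummer}. The requirement that $a^{\perp}$ be a genuine wall, i.e.\ that $(\mathrm{pr}_{v^{\perp}}(a))^{\perp}$ meet the open positive cone, is equivalent to $\mathrm{pr}_{v^{\perp}}(a)^2 < 0$ because $\mathrm{NS}(X)$ has signature $(1,\rho-1)$; this produces the strict upper bound $2a_0 < b^2/(2(n+1))$ of \eqref{eq:-2<2a<bKummer}, while $a^2 \geq 0$ and $0 < (v,a) \leq n+1$ give its lower bound and the range \eqref{eq:bin0,n-1Kummer}. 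To pin down the divisibility I would use that, the ambient lattice being unimodular, the discriminant group of $v^{\perp}$ is cyclic of order $v^2 = 2(n+1)$, generated by $\mathrm{pr}_{v^{\perp}}(w)$ for any $w$ with $(v,w)=1$; since $\mathrm{pr}_{v^{\perp}}(a) - (v,a)\,\mathrm{pr}_{v^{\perp}}(w) = a - (v,a)w \in v^{\perp}$, the class of $\mathrm{pr}_{v^{\perp}}(a)$ in this group is $(v,a) \bmod 2(n+1)$. Thus $\mathrm{pr}_{v^{\perp}}(a)$ is, up to sign, the primitive dual class $\widehat{\alpha}$ with $\pm\delta(\alpha) = b$, completing the match with \eqref{eq:bin0,n-1Kummer}.

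Finally, bijectivity follows by reversing these identifications: given an MBM class as in Theorem \ref{TheoremKummer} with data $(a_0,b)$, the element $a = \widehat{\alpha} + \tfrac{b}{v^2}v$ satisfies $(v,a) = b$ and $a^2 = 2a_0$, and the prescribed value $\pm\delta(\alpha) = b$ guarantees it lifts to an integral class in $W$ with $\mathrm{pr}_{v^{\perp}}(a)$ on the ray of $\widehat{\alpha}$; the normalisation $0 < (v,a) \leq n+1$ removes the only remaining ambiguity $a \leftrightarrow v-a$, so the induced map on hyperplanes is a bijection. I expect the main obstacle to be exactly this divisibility step: one must verify that the clean discriminant formula survives in the Kummer setting, where $v^{\perp}$ is taken in the full unimodular lattice while $\theta$ is phrased through the algebraic Mukai lattice, and that the integrality of the lift $a$ is governed precisely by $\delta(\alpha)$. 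The whole scheme runs parallel to the $\mathrm{K3}$ case of Proposition \ref{BMeqAV}, the only substantive changes being $2(n-1) \rightsquigarrow 2(n+1)$, the bound $a^2 \geq -2$ giving way to $a^2 \geq 0$, and the exclusion of $b=0$ forced by the strict inequality $(v,a) > 0$.
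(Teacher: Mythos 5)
Your route is structurally parallel to the paper's --- both arguments reduce the proposition to matching the invariants $\bigl(q(\widehat{\alpha}),\pm\delta(\alpha)\bigr)$ of Theorem \ref{TheoremKummer} against the conditions $a^2\geqslant 0$, $0<(v,a)\leqslant v^2/2$ defining $W$ --- but where the paper computes in Mukai coordinates $(u,\varkappa,s)$ and invokes the monodromy normal form $\gamma(z)=tx-be$ from the proof of Theorem \ref{TheoremKummer}, you argue invariantly through the discriminant group of $v^{\perp}$ in the unimodular Mukai lattice. The gap sits exactly at the step you flagged as the main obstacle, and it is not a technicality: the claim ``$\mathrm{pr}_{v^{\perp}}(a)$ is, up to sign, the primitive dual class $\widehat{\alpha}$ with $\pm\delta(\alpha)=b$'' is false in general. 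It holds precisely when $\mathrm{pr}_{v^{\perp}}(a)$ is primitive in $(v^{\perp})^{*}$, equivalently $(a,v^{\perp})=\mathbb{Z}$, and this is not implied by $a\in W$, not even when $a$ is primitive in the Mukai lattice. Concretely, for $v=(1,0,-(n+1))$ take $a=(0,0,-k)$ with any integer $1\leqslant k\leqslant n+1$: then $a^2=0$ and $(v,a)=k$, so $a\in W$, while
$$
\mathrm{pr}_{v^{\perp}}(a)\,=\,a-\frac{k}{2(n+1)}v\,=\,-\frac{k}{2(n+1)}e\,=\,-k\,\widehat{e}.
$$
All these elements cut the same wall, the Hilbert--Chow wall $e^{\perp}$, whose primitive orthogonal class $e$ has invariants $(a,b)=(0,1)$ in Theorem \ref{TheoremKummer}; for $k\geqslant 2$ your recipe instead assigns to this wall the invariants $(0,k)$, which determine a \emph{different} monodromy orbit (for $k=n+1$, the orbit of $2x-e$ with $q(x)=0$) whose orthogonal hyperplane is a different wall. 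So for such representatives the forward direction does not show that the given wall is orthogonal to an MBM class --- it produces an MBM orbit living on another hyperplane --- and your injectivity claim (``the only remaining ambiguity is $a\leftrightarrow v-a$'') also fails, since $(0,0,-1)$ and $(0,0,-(n+1))$ are both normalized elements of $W$ cutting the same hyperplane.

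What is missing is a reduction statement: every wall of Theorem \ref{Theorem12.1Kummer} that meets $\mathrm{Pos}(X)$ admits a representative $a\in W$ with $(a,v^{\perp})=\mathbb{Z}$ (for the wall above, $(0,0,-1)$ rather than $(0,0,-(n+1))$). The natural repair inside your framework is to run the converse first --- your lift $a=\pm\widehat{\alpha}+\frac{b}{v^2}v$ of an MBM class is unproblematic, since $(v,a)=b$ and $a^2=q(\widehat{\alpha})+\frac{b^2}{v^2}=2a_0\geqslant 0$ come for free from \eqref{eq:q(alpha)Kummer} and \eqref{eq:-2<2a<bKummer}, and integrality is exactly the gluing condition governed by $\delta(\alpha)$ --- and then to prove that an arbitrary $a\in W$ with $\mathrm{pr}_{v^{\perp}}(a)^2<0$ cuts the same hyperplane as one of these reduced lifts; that is where the genuine lattice work lies, because the primitive class on the ray of $\mathrm{pr}_{v^{\perp}}(a)$ does not have invariants $\bigl(a^2/2,(v,a)\bigr)$ when $\mathrm{pr}_{v^{\perp}}(a)$ is imprimitive in the dual lattice. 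For what it is worth, the paper's own proof elides the same reduction (it identifies the normal-form invariant $b$ with $(v,a)$ for every $a\in W$), so your scheme is not worse than the published one; but as written your central identification fails on a family of elements of $W$ that includes the most basic wall, so the proof is incomplete without this step.
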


\textbf{Notation.} Let $X$ be a K\"{a}hler manifold. If we need to fix some complex structure~$I$  on $X,$  then we write $(X,I)$ to define the manifold $X$ with complex structure~$I.$ By $H^{p,q}(X,I)$ we denote $(p,q)$-th component of Hodge decomposition of~$H^{p+q}(X,\mathbb{C})$ with respect to the complex structure $I.$

\vspace{5mm}

\textbf{Acknowledgment.} The author is grateful to E.~Yu.~Amerik for suggesting this problem, for a lot of discussions, for attention to this paper and for her patience. The author would like to thank the referee for carefully reading the manuscript and for giving such constructive comments which substantially helped improving the quality of the paper.

This work was performed at the Steklov International Mathematical Center and supported by the Ministry of Science and Higher Education of the Russian Federation (agreement no. 075-15-2022-265).  The author is a winner of the all-Russia mathematical August Moebius contest of graduate and undergraduate student papers and thanks the jury and the board for the high praise of his work.  The author was partially supported by Theoretical Physics and Mathematics Advancement Foundation “BASIS” and by the HSE University Basic Research Program.

	\section{Preliminaries}\label{SectionPreliminaries}
	
In this section we recall some theory about irreducible holomorphic symplectic manifolds.

\begin{definition}
A smooth compact  K\"{a}hler manifold $X$ is called \emph{an irreducible holomorphic symplectic manifold} (or just \emph{IHSM}) if it has trivial $\pi_1(X)$ and 
$$
H^0(X,\Omega_X^2) \simeq \mathbb{C}\sigma
$$
\noindent with a non-degenerate holomorphic 2-form $\sigma$.
\end{definition}

A few examples of such manifolds are known. Two series of examples were described by A.~Beauville in \cite{Beauville}. They are the Hilbert scheme of $n$ points on a $\mathrm{K3}$ surface and the  Kummer variety which is defined as follows.

\begin{definition}
Let $A$ be a complex two-dimensional  torus. Then the \textit{Kummer variety} of dimension $2n$  is the fibre over zero of the summation map
$$
s:\mathrm{Hilb}^{n+1}(A) \to A.
$$
\noindent We denote the Kummer variety of dimension $2n$ by $\mathrm{Kum}^n(A).$
\end{definition}

 \noindent Other two examples were found by O'Grady in \cite{OG1} and \cite{OG2}. All mentioned series of examples are not pairwise birational, because they have different second Betti numbers.

\bigskip

\subsection*{Beauville--Bogomolov form and Mukai lattice.} The IHS manifolds have an interesting feature  which, in general, only surfaces have. If~$X$ is an IHSM, then there is a non-degenerate quadratic form $q$ on the second integral cohomology~$H^2(X,\mathbb{Z})$ which is called \emph{Beauville--Bogomolov form}. This form is of signature $(3,b_2(X)-3)$ and its restriction on $H^{1,1}(X,\mathbb{R})$ is of signature~\mbox{$(1,b_2(X)-3).$} This form is uniquely up to multiplication by a non-zero constant.

\begin{Th}[{\cite[Theorem 4.7]{F} and~\cite[Proposition 23.14]{GJH}}]\label{th:beaubilebogomolov}
Let $X$ be a IHSM of dimension $2n.$ Denote by~$q$ the Beauville--Bogomolov form. Then there is a constant~\mbox{$c \in \mathbb{R}_{>0}$} depending only on the manifold $X$ such that for $\alpha \in H^2(X,\mathbb{Z})$ we get 
\begin{equation}\label{eq:integral}
\int_{X}\alpha^{2n}\,=\,cq(\alpha)^n.
\end{equation}
\noindent  In particular, $q$ can be renormalized such that $q$ is a primitive integral quadratic form on $H^2(X,\mathbb{Z}).$
\end{Th}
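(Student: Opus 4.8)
The plan is to reduce the statement to a single polynomial identity on the finite‑dimensional space $V=H^2(X,\mathbb{R})$ and then extract integrality. Set $P(\alpha)=\int_X\alpha^{2n}$, a homogeneous polynomial of degree $2n$ on $V$ (and on its complexification $V_{\mathbb{C}}=H^2(X,\mathbb{C})$). The target is to show $P=c\,q^n$ for the Beauville--Bogomolov form $q$ introduced above and some $c\in\mathbb{R}_{>0}$; the renormalization statement will then follow formally. The Hodge‑theoretic input I would use is the behaviour of the symplectic class: since $X$ has complex dimension $2n$, the $(2,0)$‑class $\sigma$ satisfies $\sigma^{\,m}=0$ in $H^\bullet(X,\mathbb{C})$ for every $m\ge n+1$, while $\int_X(\sigma\bar\sigma)^n\ne 0$; and $q$ is characterized among its scalar multiples by $q(\sigma)=0$, i.e. the period point $[\sigma]$ lies on the quadric $Q=\{q=0\}\subset\mathbb{P}(V_{\mathbb{C}})$.

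The key step is to show that $P$ vanishes to order $\ge n$ along $Q$. First I would invoke the classical deformation theory of irreducible holomorphic symplectic manifolds: by unobstructedness (Bogomolov--Tian--Todorov) the Kuranishi space is smooth, and by the local Torelli theorem the period map is a local isomorphism, so the period points $[\sigma_t]$ of the small deformations $X_t$ sweep out an open, hence Zariski‑dense, subset of $Q$. Identifying all $X_t$ with $X$ as oriented manifolds (deformations are diffeomorphic) makes $P$ independent of $t$, and for any $\beta\in V_{\mathbb{C}}$ the expansion
\[
P(\sigma_t+\varepsilon\beta)=\sum_{k=0}^{2n}\binom{2n}{k}\varepsilon^{k}\int_X\beta^{k}\,\sigma_t^{\,2n-k}
\]
has vanishing coefficients for $k\le n-1$, because then $2n-k\ge n+1$ and $\sigma_t^{\,2n-k}=0$. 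Thus all partial derivatives of $P$ of order $\le n-1$ vanish at every $[\sigma_t]$, hence identically on $Q$ by density. In other words $P$ lies in the $n$‑th symbolic power of the ideal $(q)$; since $q$ is nondegenerate (so $Q$ is smooth and $q$ is irreducible, using $b_2(X)\ge 3$) symbolic and ordinary powers agree, giving $q^n\mid P$. Comparing degrees ($\deg P=2n=\deg q^n$) forces $P=c\,q^n$ with $c$ constant, and $c>0$ because $\int_X\omega^{2n}>0$ and $q(\omega)>0$ for a Kähler class $\omega$.

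Finally I would deduce integrality. As $P$ takes integer values on the lattice $H^2(X,\mathbb{Z})$, it is a polynomial with rational coefficients, so the quadric $Q=\{P=0\}=\{q=0\}$ is defined over $\mathbb{Q}$; a smooth quadric defined over $\mathbb{Q}$ is cut out by a rational quadratic form, unique up to a rational scalar, whence $q=\lambda\,q_0$ with $q_0$ rational and $\lambda\in\mathbb{R}^\times$. Clearing denominators and dividing by the gcd of the coefficients yields a primitive integral form, unique up to sign, and the sign is fixed by positivity on the Kähler cone; this is the asserted normalization, after which \eqref{eq:integral} holds with the corresponding $c$. The step I expect to be the main obstacle is the passage from ``$P$ vanishes on $Q$'' to the sharp divisibility $q^n\mid P$: the naive Nullstellensatz only yields $q\mid P$, and it is precisely the order‑$n$ vanishing coming from $\sigma^{n+1}=0$, together with the smoothness of $Q$ (so that the symbolic power $(q)^{(n)}$ equals $(q)^n$), that upgrades this to the full power. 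A secondary subtlety is ensuring the period points genuinely fill an open subset of $Q$ rather than a lower‑dimensional subvariety, which is exactly what local Torelli guarantees.
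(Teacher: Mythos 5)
This theorem is not proved in the paper at all: it is imported with citations to Fujiki \cite[Theorem 4.7]{F} and \cite[Proposition 23.14]{GJH}, so there is no internal proof to compare against, and your blind reconstruction is in fact essentially the standard argument given in those references (order-$n$ vanishing of $P(\alpha)=\int_X\alpha^{2n}$ at period points because $\sigma_t^{\,m}=0$ for $m\geqslant n+1$, Zariski density of the period points in the quadric $\{q=0\}$ via Bogomolov--Tian--Todorov unobstructedness and local Torelli, the divisibility $q^n\mid P$ from irreducibility of the nondegenerate form $q$ of rank $b_2\geqslant 3$, a degree count to get $P=c\,q^n$, and positivity of $c$ from a K\"ahler class). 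Your proof is correct; the only cosmetic blemishes are the sentence ``$q$ is characterized among its scalar multiples by $q(\sigma)=0$'' (vacuous as stated, since every multiple of $q$ kills $\sigma$ --- what you actually use is just that period points lie on the quadric) and the appeal to symbolic powers, which is heavier than needed, since for the principal prime ideal $(q)$ in a polynomial ring vanishing to order $\geqslant n$ at a generic point of $\{q=0\}$ already yields $q^n\mid P$ by localizing at the discrete valuation ring of that ideal.
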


In fact, the primitive integral Beauville-Bogomolov form is uniquely defined by~\eqref{eq:integral} and signature on $H^{1,1}(X,\mathbb{R}).$ This means that the quadratic form $q$ depends only on topology of the manifold (see~\cite[Remark 23.15]{GJH}). Thus, it is invariant under the deformations. We always assume that the Beauville-Bogomolov form is primitive integral form on $H^2(X,\mathbb{Z}).$

\begin{Remark}\label{homology}
The second cohomology group of an IHSM equipped with Beauville--Bogomolov form is a lattice. We have the embedding $H_2(X,\mathbb{Z}) \hookrightarrow H^2(X,\mathbb{Z}) \otimes \mathbb{Q}$ which is given by the lattice duality. 
\end{Remark}

Let us give the description of the Beauville--Bogomolov form for the Hilbert scheme of points on a $\mathrm{K3}$ surface and the Kummer variety. First of all, we recall the definition and some properties of the Mukai lattice, which will be extremely important below. Let 
$$
L\,=\,H^0(X,\mathbb{Z}) \oplus H^2(X,\mathbb{Z}) \oplus H^4(X,\mathbb{Z})
$$
\noindent be the lattice of even cohomologies of a surface $X$ with the quadratic form which is called Mukai pairing and given by
$$
((r_1,\alpha_1,s_1),(r_2,\alpha_2,s_2))\,=\,\alpha_1 \cdot \alpha_2 - r_1 \cdot s_2 - r_2\cdot s_1,
$$

\noindent where $(r_i,\alpha_i,s_i) \in L$ such that $r_i \in H^0(X, \mathbb{Z}),$ $\alpha_i \in H^2(X, \mathbb{Z})$ and $s_i \in H^4(X, \mathbb{Z}).$  Let
$$
H^*_{alg}(X,\mathbb{Z})\,=\,H^0(X,\mathbb{Z}) \oplus \mathrm{NS}(X) \oplus H^4(X,\mathbb{Z}).
$$
\noindent It has the following geometric meaning: for the Grothendieck group of the category of coherent sheaves $\mathcal{K}_0(X)$ we have a surjective homomorphism 
$$
v: \mathcal{K}_0(X) \twoheadrightarrow H^*_{alg}(X)
$$
\noindent which maps the class of a sheaf $\mathcal{F}$ to the Mukai vector  $v(\mathcal{F})\,=\,ch(\mathcal{F})\sqrt{td(X)},$  where~$ch(\mathcal{F})$ is the Chern character and $td(X)$ is the Todd class of tangent bundle of $X$. In the case  of a $\mathrm{K3}$ surface we get
$$
v(\mathcal{F})\,=\,(rk(\mathcal{F}),c_1(\mathcal{F}),\chi(\mathcal{F})-rk(\mathcal{F}))
$$
 \noindent and for a complex two-dimensional  torus we have
 $$
 v(\mathcal{F})\,=\,(rk(\mathcal{F}),c_1(\mathcal{F}),\chi(\mathcal{F})),
 $$
	
\noindent where $rk(\mathcal{F})$ is the rank, $c_1(\mathcal{F})$ is the first Chern class and 
$$
\chi(\mathcal{F})\,=\,\sum_i (-1)^i \dim \mathrm{Ext}^i(\mathcal{F},\mathcal{F})
$$
\noindent  is the Euler characteristic.	

The Hilbert scheme of $n$ points on the  $\mathrm{K3}$ surface $S$  is a moduli space of ideal sheaves of $n$ points on $S$ with respect to a stability condition $\sigma$. These are sheaves with the Mukai vector $v\,=\,(1,0,-n+1).$  The orthogonal complement of $v$ with respect to the Mukai quadratic form is isomorphic to the N\'{e}ron-Severi group of~$M_{\sigma}(v).$ So we have
$$
v^{\perp} \simeq \mathrm{NS}(S) \oplus \mathbb{Z} e,
$$
	
\noindent where $e\,=\,(1,0,n-1)$ and $e^2\,=\,-2(n-1).$ This means that the restriction of the Beauville--Bogomolov form for $\mathrm{Hilb}^n(S)$ on $\mathrm{NS}(S)$ coincides with the intersection form on $\mathrm{NS}(S),$ and~$e$ is orthogonal to  $\mathrm{NS}(S).$

The same description works for a complex two-dimensional  torus $A$. In this case we consider the moduli space $M_{\sigma}(v)$ for the Mukai vector~\mbox{$v\,=\,(1,0,-n-1).$} Every sheaf $\mathcal{F} \in M_{\sigma}(v)$ can be written as $\mathcal{F}=\mathcal{L} \otimes I_{T},$ where~\mbox{$\mathcal{L} \in \mathrm{Pic}^0(A)=A^{\vee}$} and~\mbox{$T \in \mathrm{Hilb}^{n+1}(A).$} The generalized Kummer variety $\mathrm{Kum}^n(A)$ is the fibre over zero of the Albanese map 
\begin{align*}
M_{\sigma}(v) & \longrightarrow A \times A^{\vee}\\
\mathcal{F} &\longmapsto \left(\mathrm{Alb}(T),\mathcal{L}\right)
\end{align*}

\noindent where $\mathrm{Alb}:\mathrm{Hilb}^{n+1}(A) \to A$ is a summation map.  We have the isomorphism
$$
v^{\perp} \simeq \mathrm{NS}(A) \oplus \mathbb{Z} e,
$$

\noindent where $e=(1,0,n+1)$ and $e^2=-2(n+1).$ This means that the restriction of the Beauville--Bogomolov form for $\mathrm{Kum}^n(A)$ on~$\mathrm{NS}(A)$ coincides with the intersection form on~$\mathrm{NS}(A)$ and~$e$ is orthogonal to  $\mathrm{NS}(A).$

The vector $e$ has the following geometric description in both $\mathrm{K3}$ and Kummer cases.

\begin{Prop}
Let $\Delta$ be the exceptional divisor of the Hilbert--Chow morphism 
$$
\mathrm{Hilb}^n(B) \to \mathrm{Sym}^n(B),
$$
\noindent where $B$ is either $\mathrm{K3}$ surface, or a complex two-dimensional torus. If $B$ is a $\mathrm{K3}$ surface and $e=(1,0,n-1),$ then the class of $\Delta$ is $2e.$  If $B$ is a complex two-dimensional torus and $e=(1,0,n+1),$ then the class of $\Delta \cap \mathrm{Kum}^{n-1}(B)$ is $2e.$
\end{Prop}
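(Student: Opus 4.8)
The plan is to deduce the identity from Beauville's description of the second cohomology of a punctual Hilbert scheme together with the formal properties of the Mukai--Yoshioka isometry $\theta$ recalled in the preliminaries. Throughout, $\delta$ denotes the primitive integral class characterized by $[\Delta]=2\delta$. For a K3 surface $S$ I would first recall from \cite{Beauville} that the Hilbert--Chow morphism produces an orthogonal (with respect to the Beauville--Bogomolov form) decomposition
$$
H^2(\mathrm{Hilb}^n(S),\mathbb{Z})\,=\,H^2(S,\mathbb{Z})\oplus\mathbb{Z}\delta,\qquad q(\delta)=-2(n-1),
$$
in which the exceptional divisor satisfies $[\Delta]=2\delta$. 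The essential geometric input is precisely the divisibility of $[\Delta]$ by $2$; this is part of Beauville's computation, and heuristically it reflects that a generic point of $\Delta$ parametrizes a length-$n$ subscheme with a single length-two point, so that $\Delta$ is generically a $\mathbb{P}^1$-bundle over the partial diagonal and its class admits a square root in integral cohomology. I would take the relation $[\Delta]=2\delta$ as the statement of \cite{Beauville}.

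Next I would identify $\delta$ with $e$ through the isometry $\theta\colon v^\perp\xrightarrow{\sim}\mathrm{NS}(M_\sigma(v))$ for $v=(1,0,-n+1)$, using only its structural properties. By construction $\theta$ carries the summand $\mathrm{NS}(S)\subset v^\perp$ isometrically onto the $H^2(S)$-part of the decomposition above; since $e$ is orthogonal to $\mathrm{NS}(S)$ and $\theta$ is an isometry, the image $\theta(e)$ is orthogonal to that part and hence lies in $\mathbb{Z}\delta$. As $\theta$ is an isomorphism of lattices it preserves primitivity, and $q(\theta(e))=e^2=-2(n-1)=q(\delta)$, so $\theta(e)=\pm\delta$. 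Fixing the orientation convention of $\theta$ (equivalently, selecting the representative $e$ of the monodromy orbit for which the effective class $[\Delta]$ corresponds to the positive multiple) yields $\theta(e)=\delta$, and therefore $[\Delta]=2\delta=2e$ under the identification of $v^\perp$ with $\mathrm{NS}(\mathrm{Hilb}^n(S))$.

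Finally, for a complex two-dimensional torus $A$ the argument runs in parallel after restriction to the fibre of the summation map defining the generalized Kummer variety. Here I would invoke the corresponding decomposition of the second cohomology of the Kummer variety, in which the restriction of the exceptional divisor is again twice a primitive class $\delta$ with $q(\delta)=e^2$, and then repeat the lattice-uniqueness argument verbatim with $v=(1,0,-n-1)$ and $e=(1,0,n+1)$. The main obstacle is bookkeeping rather than conceptual: one must verify that $\theta$ respects the two orthogonal summands and pin down the sign in $\theta(e)=\pm\delta$, and, in the Kummer case, transport Beauville's relation $[\Delta]=2\delta$ correctly across the Albanese fibre while matching the divisibility of the square. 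Once these normalizations are fixed, the lattice argument reduces the entire statement to the single comparison $\theta(e)=\delta$.
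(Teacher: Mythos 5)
Your proposal is correct and follows essentially the same route as the paper: the paper's proof is a direct citation of Beauville's decomposition $H^2(\mathrm{Hilb}^n(S),\mathbb{Z})=H^2(S,\mathbb{Z})\oplus\mathbb{Z}\delta$ with $[\Delta]=2\delta$ (and of Britze's analogue for the Kummer fibre), which is exactly the geometric input you take as given. Your additional lattice-theoretic check that $\theta(e)=\delta$ (orthogonality to $\mathrm{NS}(S)$, primitivity, equal squares, sign pinned down by effectivity of $\Delta$) only spells out what the paper leaves implicit in those references.
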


\begin{proof}
The proposition follows directly from~\cite[Remark after Proposition 6]{Beauville} for $\mathrm{K3}$ case and \cite[Chapter 2, \S1, Remark 5]{Britze} for Kummer case.
\end{proof}

\begin{definition}
 We denote by $\widehat{e}$ the class of a general fibre on the exceptional divisor of the Hilbert--Chow morphism in $H_2(X,\mathbb{Z}).$ It satisfies the equation 
$$
q(e,\widehat{e})=-1.
$$
\end{definition}

\begin{Remark}
Note, that if $X=\mathrm{Hilb}^n(B)$ (respectively, $X=\mathrm{Kum}^n(B)$) for a~$\mathrm{K3}$ surface (respectively, for a complex two-dimensional  torus $B$),  then we have the following isomorphisms:
\begin{gather*}
H^2(X,\mathbb{Z}) \simeq H^2(B,\mathbb{Z}) \oplus \mathbb{Z}e;\\
\mathrm{Pic}(X) \simeq \mathrm{NS}(B) \oplus \mathbb{Z}e;\label{Picard}\\
\mathrm{N}_1(X) \simeq \mathrm{N}_1(B) \oplus \mathbb{Z}\widehat{e},
\end{gather*}
\noindent where by $\mathrm{N}_1$ we denote the group of $1$-cycles modulo numerical equivalence. 
\end{Remark}

\bigskip

\subsection*{The space of deformations.} Now we recall some definitions and facts of deformation theory.

\begin{definition}
Let $X_1$ and $X_2$ be complex manifolds. We say that they are \textit{deformation equivalent} if there is a smooth proper holomorphic morphism $\mathcal{X} \to B$ such that $B$ is connected and there are two points $b_1,b_2 \in B$ such that $\mathcal{X}_{b_1} \simeq X_1$ and $\mathcal{X}_{b_2} \simeq X_2.$ Here the variety $B$ is called \textit{a space of deformations.}

\end{definition}

Now we give the description of the deformation space of IHSM which will be convenient for us.

\begin{definition}
Let $X$ be an IHSM, $\text{Diff}_0(X)$ be a connected component of its diffeomorphism group, $\text{Comp}$ be the space of complex structures of the K\"{a}hler type on $X.$ Then $\text{Teich}\,=\,\text{Comp}/\text{Diff}_0(X)$ is the \textit{Teichm\"{u}ller space}.
\end{definition}

Note that the Teichm\"{u}ller space is non-Hausdorff. However, there is an important result which is given in~\cite{Huybrechts1999}. 

\begin{Th}[{\cite[Theorem 4.3]{Huybrechts1999}}]\label{th:birationalnonsep}
Let $X$ and $X'$ be IHSMs which are diffeomorphic. Assume that they are inseparable in the space $\mathrm{Teich}.$ Then $X$ and $X'$ are birational.
\end{Th}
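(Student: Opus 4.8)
The plan is to extract a bimeromorphic correspondence between $X$ and $X'$ as a limiting analytic cycle, using the period map to pin down the Hodge-theoretic geometry and Bishop's compactness theorem to take the limit; in the projective case this bimeromorphic map is the asserted birational one. Throughout I identify the complex structure $I$ (giving $X$) and the complex structure $I'$ (giving $X'$) as two points of $\mathrm{Teich}$ living on a single underlying smooth manifold $M$, using the given diffeomorphism.

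First I would record that inseparable points share a period. Let $\Omega=\{[\sigma]\in\mathbb{P}(H^2(M,\mathbb{C}))\mid q(\sigma)=0,\ q(\sigma,\overline{\sigma})>0\}$ be the period domain, and let $\mathcal{P}\colon\mathrm{Teich}\to\Omega$ send a complex structure $J$ to the line $H^{2,0}(M,J)$. The local Torelli theorem makes $\mathcal{P}$ a local homeomorphism, and $\Omega$ is Hausdorff; hence if $\mathcal{P}(I)\neq\mathcal{P}(I')$ one could pull back disjoint neighbourhoods in $\Omega$ to separate $I$ and $I'$ inside $\mathrm{Teich}$. Inseparability therefore forces $\mathcal{P}(I)=\mathcal{P}(I')$.

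Next I would turn inseparability into a convergent sequence of honest complex manifolds. By the definition of non-separated points there is a sequence of complex structures $J_i$ on $M$ with $J_i\to I$ and simultaneously $J_i\to I'$ in $\mathrm{Teich}$. Set $X_i=(M,J_i)$ and consider the diagonal $\Delta_i\subset X_i\times X_i$, a $2n$-dimensional complex analytic subvariety. Viewing these diagonals inside the products $(M,J_i)\times(M,J_i)$ and letting $i\to\infty$, the product complex structures $(J_i,J_i)$ converge to $(I,I')$, so the ambient spaces converge to $X\times X'$. The fundamental classes $[\Delta_i]\in H_{4n}(M\times M,\mathbb{Z})$ are all equal to the fixed topological diagonal class, so the volumes of the $\Delta_i$ with respect to a fixed reference metric are uniformly bounded (the volume is governed, via the Fujiki relations, by this fixed homology class paired with the converging K\"ahler classes).

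The heart of the matter is the limit. By Bishop's compactness theorem a subsequence of the $\Delta_i$ converges to an analytic cycle $Z\subset X\times X'$ of dimension $2n$. I would then single out the component $Z_0$ dominating both factors and argue that the two projections $Z_0\to X$ and $Z_0\to X'$ are each generically one-to-one, hence bimeromorphic: each diagonal $\Delta_i$ pulls the holomorphic symplectic form of the second factor back to that of the first, and this identity survives in the limit, producing a rigidity that forbids $Z_0$ from acquiring positive-dimensional generic fibres over either factor. Hence $Z_0$ is the graph of a bimeromorphic map $X\dashrightarrow X'$, which is birational when $X$ and $X'$ are projective. This final step is exactly where I expect the main difficulty: ruling out degenerations of the limit cycle — reducibility, jumps in fibre dimension, or collapse onto the exceptional loci of the competing birational models — and showing that the symplectic constraint isolates a single graph-like component rather than a genuinely multivalued correspondence.
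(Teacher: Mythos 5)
The paper does not prove this statement at all: it is imported verbatim from \cite[Theorem 4.3]{Huybrechts1999}, so your attempt has to be measured against Huybrechts' own argument. Your overall strategy coincides with his: convert inseparability into a family of isomorphisms between nearby complex structures, take their graphs, bound volumes cohomologically, extract a limit cycle in $X\times X'$ by Bishop's theorem, and show that one component of the limit is the graph of a bimeromorphic map. Two of your steps, however, contain genuine gaps.

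First, the set-up of the convergent sequence is incorrect as written. The space $\mathrm{Comp}$ of complex structures is Hausdorff, so a single sequence $J_i$ cannot converge to both $I$ and $I'$, and ``$(J_i,J_i)\to(I,I')$'' is impossible unless $I=I'$. What inseparability in $\mathrm{Teich}=\mathrm{Comp}/\mathrm{Diff}_0(M)$ actually provides (sequences suffice because $\mathrm{Teich}$ is locally homeomorphic to the finite-dimensional Kuranishi space) are two sequences of isotopies $\phi_i,\psi_i\in\mathrm{Diff}_0(M)$ with $\phi_i^*J_i\to I$ and $\psi_i^*J_i\to I'$ in $\mathrm{Comp}$. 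The cycles whose limit you must take are then the graphs $\Gamma_i$ of $\psi_i^{-1}\circ\phi_i$, which are complex-analytic for the pair of structures $(\phi_i^*J_i,\psi_i^*J_i)$ converging to $(I,I')$; since $\psi_i^{-1}\circ\phi_i$ is isotopic to the identity, these graphs still represent the diagonal class, and with Kodaira--Spencer stability supplying convergent K\"ahler forms for the nearby structures, your volume bound and the Bishop step go through. This is repairable, but the gauge-fixing is exactly what makes the construction meaningful.

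Second, and this is the heart of the theorem, you \emph{assume} a component $Z_0$ dominating both factors and then invoke ``symplectic rigidity'' for generic injectivity; neither point is established, and the heuristic is misdirected. Since every component of the limit cycle $\Gamma=\sum_j m_jZ_j$ has dimension $2n$, a component dominating a factor is automatically generically finite over it --- positive-dimensional generic fibres are not the issue; the issues are whether a single component dominates \emph{both} factors and whether the degrees equal one. Moreover, for $n>1$ the $2$-form $\sigma'$ restricts nontrivially to suitable proper subvarieties of $X'$ (e.g. codimension-two symplectic ones), so the pullback identity for $\sigma'$ alone does not forbid collapse. The correct argument runs as follows: because $[\Gamma]=[\Delta]$, pushing forward by either projection gives $\sum_j m_j\deg(p|_{Z_j})=1=\sum_j m_j\deg(q|_{Z_j})$, so \emph{exactly one} component dominates $X$, with multiplicity one and degree one, and likewise for $X'$; and these two components coincide because
\[
\sum_j m_j\int_{Z_j}p^*\bar\sigma^{\,n}\wedge q^*(\sigma')^{\,n}
\;=\;\int_{\Delta}p^*\bar\sigma^{\,n}\wedge q^*(\sigma')^{\,n}
\;=\;\lambda^{n}\int_{M}\sigma^{n}\wedge\bar\sigma^{\,n}\;\neq\;0,
\]
where $[\sigma']=\lambda[\sigma]$ by the equality of periods (this is where your first observation becomes load-bearing), while the integrand vanishes identically on any component failing to dominate $X$ or $X'$: a form of bidegree $(0,2n)$ or $(2n,0)$ pulled back through a variety of dimension less than $2n$ is zero. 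Hence one component is generically one-to-one over both factors, i.e. the graph of a bimeromorphic (in the projective case, birational) map. In short, generic injectivity comes from the homology class being the diagonal class, and domination of both factors from the top powers $\sigma^n,(\sigma')^n$ --- not from the degree-two symplectic identity you propose.
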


It is convenient to consider the \textit{birational Teichm\"{u}ller space}  $\text{Teich}_b=\text{Teich}/ \sim,$ where~\mbox{$X \sim X'$} if and only if they are inseparable in $\mathrm{Teich}.$ The  birational Teichm\"{u}ller space is the Hausdorff reduction of the  Teichm\"{u}ller space. By Theorem~\ref{th:birationalnonsep} every point of $\mathrm{Teich}$ over any~$x \in \text{Teich}_b$ corresponds to some birational model of~$X.$

\bigskip

\subsection*{Cones associated to an irreducible holomorphic symplectic manifold.} Now let us recall some facts about the positive cone for IHSM.

\begin{definition}
Let $X$ be an IHSM. Both the set of $\alpha \in \mathrm{NS}(X)_{\mathbb{R}}$ for projective $X$ and the set of $\alpha \in H^{1,1}(X)_{\mathbb{R}}$ for K\"{a}hler $X,$  such that $q(\alpha)>0,$ have two connected components. The positive cone  is the component that contains ample or  K\"{a}hler classes, respectively. We denote both of them by $\mathrm{Pos}(X)$ when there is no confusion, otherwise we denote by $\mathrm{Pos}_{\text{amp}}(X)$ and $\mathrm{Pos}_{\text{K\"{a}h}}(X)$ the positive cones of ample and  K\"{a}hler classes, respectively.

\end{definition}

\begin{Remark}
Note that  $\mathrm{Pos}_{\text{amp}}(X)$ and $\mathrm{Pos}_{\text{K\"{a}h}}(X)$ are different, because in general the inclusion
$$
\mathrm{NS}(X)_{\mathbb{R}}=H^{1,1}(X,\mathbb{Z})_{\mathbb{R}} \subset H^{1,1}(X,\mathbb{R})
$$
\noindent is strict.  However, the intersection of $\mathrm{Pos}_{\text{K\"{a}h}}(X)$ with $\mathrm{NS}(X)_{\mathbb{R}}$ gives us $\mathrm{Pos}_{\text{amp}}(X)$. This is the reason why we will mainly deal with the positive cone $\mathrm{Pos}_{\text{K\"{a}h}}(X).$ 
\end{Remark}

\begin{definition}
Let $X$ be a K\"{a}hler manifold. The K\"{a}hler cone 
$$
\mathcal{K}(X) \subset H^{1,1}(X,\mathbb{R})
$$
\noindent is an open convex cone of K\"{a}hler classes. 

\end{definition}

\begin{definition}
Let $X$ be a projective manifold. The ample cone 
$$
\mathrm{Amp}(X) \subset \mathrm{NS}(X)_{\mathbb{R}}
$$
\noindent  is the open cone of finite sums $\sum_ia_iL_i,$ where $a_i \in \mathbb{R}_{>0}$ and $L_i$ are ample classes in $\mathrm{NS}(X).$  The nef cone $\mathrm{Nef}(X) \subset \mathrm{NS}(X)_{\mathbb{R}}$ is the set of classes $\mathrm{NS}(X)_{\mathbb{R}}$ such that their intersections with curves on $X$ is non-negative.
\end{definition}

The closure of the ample cone is the nef cone, i.e. we have
$$
\mathrm{Amp}(X)  = \mathrm{Int}\mathrm{Amp}(X) \subset \mathrm{Nef}(X) = \overline{\mathrm{Amp}(X)},
$$

\noindent where $\mathrm{Int}\mathrm{Amp}(X)$ is the interior of the ample cone of $X.$ We have the following result.

\begin{Th}[{\cite{Bou}}]\label{ThofBoucksom} The K\"{a}hler cone of an IHSM $X$ consists of all elements $\alpha$ in~$\mathrm{Pos}(X)$ such that $q(\alpha,C) \,>\, 0$ for all rational curves $C$.
\end{Th}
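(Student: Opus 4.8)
The plan is to prove the two inclusions separately, the nontrivial one being reduced to the Demailly--Paun characterisation of the Kähler cone together with the production of rational curves along the walls. Throughout I use that, under the lattice duality $H_2(X,\mathbb{Z})\hookrightarrow H^2(X,\mathbb{Z})\otimes\mathbb{Q}$ of Remark \ref{homology}, the pairing $q(\alpha,C)$ is nothing but the integration pairing $\int_C\alpha$ between a $(1,1)$-class and the fundamental class of a curve.

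\textbf{Easy inclusion.} First I would check that $\mathcal{K}(X)$ is contained in the cone on the right. If $\alpha$ is a Kähler class then $\int_X\alpha^{2n}>0$, so by the Fujiki relation \eqref{eq:integral} we get $q(\alpha)>0$; since $\mathrm{Pos}(X)$ is by definition the component of $\{q>0\}$ containing the Kähler classes, we have $\alpha\in\mathrm{Pos}(X)$. Moreover a Kähler form restricts to a strictly positive volume form on any complex curve, whence $q(\alpha,C)=\int_C\alpha>0$ for every curve $C$, in particular for every rational one.

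\textbf{Hard inclusion.} For the reverse inclusion, set $\mathcal{C}:=\{\alpha\in\mathrm{Pos}(X):q(\alpha,C)>0 \text{ for all rational } C\}$. Because $q$ has signature $(1,b_2(X)-3)$ on $H^{1,1}(X,\mathbb{R})$, the cone $\mathrm{Pos}(X)$ is convex, and $\mathcal{C}$, being its intersection with the open half-spaces $\{q(\,\cdot\,,C)>0\}$, is an open convex --- hence connected --- cone containing $\mathcal{K}(X)$. I claim $\mathcal{C}=\mathcal{K}(X)$. If not, then since $\mathcal{K}(X)$ is open and $\mathcal{C}$ is connected there is a class $\alpha\in\partial\mathcal{K}(X)\cap\mathcal{C}$: such an $\alpha$ lies in the closure of $\mathcal{K}(X)$ but not in $\mathcal{K}(X)$ itself, and lies in $\mathrm{Pos}(X)$. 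By the Demailly--Paun theorem the Kähler cone of a compact Kähler manifold is a connected component of
\[
\mathcal{P}=\Bigl\{\beta\in H^{1,1}(X,\mathbb{R}):\int_Y\beta^{\dim Y}>0 \text{ for every irreducible } Y\subseteq X\Bigr\},
\]
so $\alpha\notin\mathcal{P}$ and its null locus is nonempty. If I can produce a rational curve $C$ in that null locus with $\int_C\alpha=0$, then $q(\alpha,C)=0$ contradicts $\alpha\in\mathcal{C}$, completing the proof.

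\textbf{The crux and the main obstacle.} The heart of the matter is therefore to show that a class $\alpha\in\mathrm{Pos}(X)$ which is a limit of Kähler classes but is not Kähler carries a rational curve in its null locus. Since $\alpha\notin\mathcal{P}$, some irreducible $Y$ satisfies $\int_Y\alpha^{\dim Y}=0$; the required geometric input is that this null locus is covered by rational curves $C$, each of which then necessarily satisfies $\int_C\alpha=0$. In the projective case these curves are produced by Mori theory, applying the cone theorem and bend--and--break to the extremal contraction supported by the face of the nef cone on which $\alpha$ sits. In the general Kähler case one cannot appeal directly to algebraicity, and I expect this to be the genuine difficulty: one first deforms the complex structure by a hyperkähler rotation inside the twistor family so that $\alpha$ becomes algebraic for a nearby complex structure, produces the rational curve there, and then transports the conclusion back, using the deformation invariance of $q$ recorded in Theorem \ref{th:beaubilebogomolov} to keep track of the homology class. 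Controlling this deformation --- ensuring that the relevant $(1,1)$-class stays algebraic, that the rational curve persists, and that its class remains orthogonal to $\alpha$ --- is where the hyperkähler geometry is indispensable, whereas the linear-algebraic facts about $q$ and the Fujiki relation used above are routine.
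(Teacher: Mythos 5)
First, a point of comparison: the paper does not prove Theorem \ref{ThofBoucksom} at all --- it is quoted from Boucksom \cite{Bou} --- so your argument has to stand on its own as a proof of Boucksom's theorem. Your skeleton is indeed the standard one (and essentially Boucksom's): the easy inclusion, then convexity and openness of the cone $\mathcal{C}$, the observation that $\mathcal{K}(X)\subsetneq\mathcal{C}$ would force a class $\alpha\in\partial\mathcal{K}(X)\cap\mathcal{C}$, and Demailly--Paun to produce an irreducible $Y$ with $\int_Y\alpha^{\dim Y}=0$. Even here two repairs are needed: for $n$ even, the Fujiki relation \eqref{eq:integral} only gives $q(\alpha)^n>0$, i.e.\ $q(\alpha)\neq 0$, so positivity of $q$ on K\"ahler classes requires the type-decomposition argument (the mixed terms $\int\omega\,\sigma^n\bar\sigma^{n-1}$ vanish), not Fujiki alone; and you use nefness of $\alpha$ tacitly to exclude $\int_Y\alpha^{\dim Y}<0$.

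The genuine gap is the step you flag as the crux, and it cannot be deferred, because it \emph{is} the content of \cite{Bou}. Two concrete failures. (1) Your assertion that a rational curve in the null locus ``necessarily satisfies $\int_C\alpha=0$'' is false: nefness of $\alpha$ together with $\int_Y\alpha^{\dim Y}=0$ does not make $\alpha$ vanish on every rational curve of $Y$. Take $Y\simeq\mathbb{P}^1\times\mathbb{P}^1$ inside $X$ with $\alpha|_Y$ the pullback of the point class from the first factor: then $\int_Y(\alpha|_Y)^2=0$, yet the horizontal rulings, which also cover $Y$, meet $\alpha$ with intersection $1$. What must be produced is a family of $\alpha$-\emph{trivial} rational curves, and that is precisely what requires proof. (2) The proposed mechanism --- ``deform by a hyperk\"ahler rotation so that $\alpha$ becomes algebraic'' --- does not make sense as stated: $\alpha$ is an arbitrary \emph{real} $(1,1)$-class on $\partial\mathcal{K}(X)$, and no deformation of complex structure turns a transcendental real class into an algebraic one; furthermore, rational curves survive a deformation only when their cohomology class stays of type $(1,1)$, which is exactly the constraint one has to engineer, not assume. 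The actual proof (Boucksom, building on Huybrechts) works instead with the wall-and-chamber structure of $\mathrm{Pos}(X)$: density of projective deformations with prescribed Picard lattice, Mori theory and bend-and-break on those projective fibers to produce extremal rational curves, and deformation-invariance of rational curves whose class remains $(1,1)$ to propagate the walls --- machinery of the same kind the present paper invokes elsewhere (e.g.\ Proposition \ref{prop:simultaneously}). As written, your proposal is a correct reduction of the theorem to its hardest ingredient, not a proof of it.
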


According to Remark \ref{homology}, one may view homology classes as cohomology classes with rational coefficients, so that $q(\alpha,C)$ is a well-defined rational number. Theorem~\ref{ThofBoucksom} is a generalization of the description of the K\"{a}hler cone for a $\mathrm{K3}$ surface. In the two-dimensional case the positivity of the intersection with $(-2)$-curves  defines ample or K\"{a}hler classes. However, in the higher-dimensional case the situation is more complicated. By Theorem \ref{ThofBoucksom}, the walls of the K\"{a}hler cone are given by the orthogonal elements to the classes of rational curves, but now the value of the quadratic form can be different on different rational curves. 

Nevertheless, it is clear that for  $\alpha \in \mathrm{Pos}(X)$ to check that it lies in the  K\"{a}hler cone we only need to check the positivity of $q(\alpha,C)$ on rational curves $C$ with negative square. Indeed, as Beauville--Bogomolov form has signature $(1,b_2-3)$ on~$(1,1)$-classes we get that the orthogonal hyperplane to a curve with non-negative square does not intersect the positive cone.

\bigskip

\subsection*{MBM classes.} A rational curve $C$ is called \textit{extremal} if its cohomological class $[C]$ cannot be written as $[C]=Z_1+Z_2,$ where~$Z_1$ and $Z_2$ are two non-proportional effective cycles in $\mathrm{NS}(X).$  By Theorem~\ref{ThofBoucksom}, the walls of the  K\"{a}hler cone are given by the orthogonal hyperplanes to the extremal rational curves. It turns out that the chamber decomposition of the positive cone is given by the orthogonal hyperplane to the class with negative Beauville--Bogomolov square represented by a rational curve on some deformation of $X.$ We recall some definitions and results about these classes.

\begin{definition}[{\cite[Definition 1.5]{MarkmanK3}}]
Let $X$ be an IHSM. An automorphism $g$ of~$H^2(X,\mathbb{Z})$ is called a \textit{monodromy operator}, if there exists a family
$$
\pi :\mathcal{X} \to B
$$
\noindent of IHSM such that $\psi:X \xrightarrow{\sim} \mathcal{X}_{b_0}$ for some $b_0 \in B$ and $g$ belongs to the image of~$\pi_1(B,b_0)$ under the monodromy representation. In other words, the automorphism $g \in \mathrm{Aut}(H^2(X,\mathbb{Z}))$  is a monodromy operator if there is a loop $\gamma: [0,1] \to B$ such that $\gamma(0)=\gamma(1)=b_0$ and the parallel transport in the local system $R^2\pi_*\mathbb{Z}$ along $\gamma$ induces the homomorphism 
$$
\psi_* \circ g \circ \psi^*:H^2(\mathcal{X}_{b_0},\mathbb{Z}) \to H^2(\mathcal{X}_{b_0},\mathbb{Z}).
$$

 The monodromy group~\mbox{$\mathrm{Mon}(X)$} is a subgroup of $\mathrm{Aut}(H^2(X,\mathbb{Z}))$ generated by all monodromy operators. By~\cite[Theorem~7.2]{Verbit} the subgroup $\mathrm{Mon}(X)$ coincides with the natural image of the mapping class group~$\Gamma=\mathrm{Diff}(X)/\mathrm{Diff}_0(X)$ in~$\mathrm{Aut}(H^2(X,\mathbb{Z})),$ where $\mathrm{Diff}(X)$ is the group of diffeomorphisms of the manifold~$X$ and~$\mathrm{Diff}_0(X)$ is its connected component containing the identity, i.e. the group of isotopies.
\end{definition}

\begin{definition}[{\cite[Definition 1.13]{AV1401}}]\label{MBM1401}
A negative rational class 
$$
\alpha \in H^{1,1}(X,I) \cap H^2(X,\mathbb{Q})
$$
\noindent  on $(X,I)$ is called \emph{MBM} if for some $\gamma \in \mathrm{Mon}(X)$ the subspace $\gamma(\alpha)^{\perp} \subset H^{1,1}(X,I)$ contains a wall of the K\"{a}hler cone of a birational model $(X,I')$ of $(X,I).$
\end{definition}

By Remark~\ref{homology} it makes sense to speak of $(1,1)$-classes represented by curves. 

\begin{definition}
Let $X$ be an IHSM and let us fix the K\"{a}hler class $\omega.$ Let $U \subset X$ be an irreducible uniruled subvariety. A rational curve $C \subset U$ is called \textit{minimal} if~$\int_C \omega$ is minimal among all curves in $U$ passing through a general point of $U.$
\end{definition}

\begin{definition}
Let $X$ be an IHSM.  A rational curve $C \subset X$ is called \textit{MBM} if it is minimal and its cohomology class $[C] \in H^2(X,\mathbb{Z})$ has negative Beauville--Bogomolov square.
\end{definition}

There is a convenient criterion that allow to check the minimality of a rational curve on IHSM.

\begin{Th}[{\cite[Corollaries 4.5 and 4.6]{AV1401} and~\cite[Remark 1.2]{AV21}}]\label{th:minimaliff2n-2}
Let $X$ be an IHSM of dimension $2n$ and let  $C \subset X$ be a  rational curve. Then $C$ is minimal if and only if $C$ deforms in~$(2n-2)$-parameter family.
\end{Th}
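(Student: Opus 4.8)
The plan is to recast ``deforms in a $(2n-2)$-parameter family'' as the statement $h^{0}(N_{f})=2n-2$, where $f\colon\mathbb{P}^{1}\to X$ is the normalisation of $C$ and $N_{f}$ its normal sheaf, and then to read this off from the splitting type of $f^{*}TX$. Since $X$ is an IHSM we have $K_{X}=0$, so $\deg f^{*}TX=-K_{X}\cdot C=0$ and $\deg N_{f}=-2$; hence $\chi(N_{f})=\deg N_{f}+(2n-1)=2n-3$, which is the expected dimension of the deformation space, and the actual dimension of the family through $[C]$ lies between $\chi(N_{f})=2n-3$ and $h^{0}(N_{f})=2n-3+h^{1}(N_{f})$. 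Thus the theorem amounts to identifying exactly when $h^{1}(N_{f})=1$ and this value is realised by the family dimension.

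First I would use the holomorphic symplectic form $\sigma$, which gives an isomorphism $TX\xrightarrow{\sim}\Omega^{1}_{X}$ and hence $f^{*}TX\cong(f^{*}TX)^{\vee}$ on $\mathbb{P}^{1}$. Writing $f^{*}TX\cong\bigoplus_{i=1}^{2n}\mathcal{O}(a_{i})$, this self-duality forces the multiset $\{a_{i}\}$ to be invariant under $a\mapsto-a$. For an immersed $C$ the tangent bundle $TC=\mathcal{O}(2)$ is a subbundle of $f^{*}TX$, so $2$ occurs among the $a_{i}$ and, by the symmetry, $-2$ occurs as well; the summand $\mathcal{O}(-2)$ then survives in $N_{f}$ and contributes $h^{1}(N_{f})\geqslant 1$, i.e. $h^{0}(N_{f})\geqslant 2n-2$. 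So every rational curve on an IHSM deforms in a family of dimension at least $2n-2$ (for $n=1$ this recovers the rigidity of $(-2)$-curves on a $\mathrm{K3}$ surface). That this lower bound is attained --- that the deformations are unobstructed modulo the one-dimensional symplectic contribution --- is the analytic input I would import for holomorphic symplectic manifolds.

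Next I would characterise minimality through the splitting type. By a bend-and-break argument, $C$ is minimal in the covering family of the uniruled subvariety it generates precisely when $f^{*}TX$ is as balanced as possible subject to containing $TC=\mathcal{O}(2)$ and to the symplectic symmetry, namely
\begin{equation*}
f^{*}TX\cong\mathcal{O}(2)\oplus\mathcal{O}(1)^{\oplus k}\oplus\mathcal{O}^{\oplus(2n-2-2k)}\oplus\mathcal{O}(-1)^{\oplus k}\oplus\mathcal{O}(-2).
\end{equation*}
For every such type a direct count gives $h^{0}(f^{*}TX)=2n+1$, whence $h^{0}(N_{f})=h^{0}(f^{*}TX)-3=2n-2$ and $h^{1}(N_{f})=1$. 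Conversely, any summand of degree $\geqslant 3$ or a second copy of $\mathcal{O}(2)$ raises $h^{0}(N_{f})$ strictly above $2n-2$ and corresponds to the curve breaking, i.e. to non-minimality. The geometric counterpart, which makes the minimality equivalence precise, is that the subvariety $U$ swept out by a minimal covering family is coisotropic for $\sigma$ of codimension $c=2n-\dim U$, the minimal curves are tangent to the $c$-dimensional leaves of the null-foliation of $\sigma|_{U}$, and through a general point of $U$ they fill a $(c-1)$-dimensional family of tangent directions; the incidence count $\dim U+(c-1)-1=2n-2$ then reproduces the same dimension independently.

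The main obstacle is the middle step: establishing rigorously that the family dimension equals $2n-2$ rather than merely lying between $2n-3$ and $2n-2$, i.e. that the symplectic form produces exactly one extra unobstructed deformation. This requires the unobstructedness theory for rational curves on holomorphic symplectic manifolds together with the coisotropy of minimal covering families, and care is needed to handle curves that are not immersed, multiple covers, and reducible degenerations; for these one argues on a general member of the family after passing to the normalisation, where $C$ is immersed and has the expected balanced splitting type.
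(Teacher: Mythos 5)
First, a framing remark: the paper gives no proof of Theorem~\ref{th:minimaliff2n-2} at all --- it is imported verbatim from \cite[Corollaries 4.5, 4.6]{AV1401} and \cite[Remark 1.2]{AV21}, and the criterion the paper actually works with downstream is the coisotropy statement of Theorem~\ref{th:criteriaminimality} together with Remark~\ref{remark:criteriaminimality}. So your attempt has to stand on its own. Its first half is sound in outline: the self-duality $f^*TX\cong (f^*TX)^{\vee}$ induced by $\sigma$, the resulting symmetry of the splitting type, and the conclusion that every rational curve moves in at least a $(2n-2)$-dimensional family is exactly Ran's theorem as used in \cite{AV1401}, and you correctly flag the unobstructedness as an imported analytic input. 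Likewise your closing incidence count $\dim U+(c-1)-1=2n-2$, granting coisotropy of $U$ and that minimal curves through a general point form a family of dimension exactly $c-1$, is in substance the argument of \cite{AV1401} for the direction ``minimal $\Rightarrow$ exactly $2n-2$ parameters.''

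The genuine gap is in the converse direction, i.e.\ the claim that a non-minimal curve moves in strictly more than $2n-2$ parameters (equivalently, that a $(2n-2)$-parameter family forces minimality). You deduce this from ``an unbalanced splitting raises $h^0(N_f)$ strictly above $2n-2$,'' but $h^0(N_f)$ is only the tangent space to the deformation space: it bounds the family dimension from \emph{above}, never from below. Worse, the very symmetry you invoke defeats the argument: since $h^0(f^*TX)-h^1(f^*TX)=\chi(f^*TX)=2n$ always, and (for immersed $f$) $h^0(N_f)=h^0(f^*TX)-3$, $h^1(N_f)=h^1(f^*TX)$, every extra positive summand $\mathcal{O}(a)$ is matched by $\mathcal{O}(-a)$, so $h^1(N_f)$ grows by exactly the same amount as $h^0(N_f)$ and the expected dimension stays at $2n-3$; Ran's theorem gives unobstructedness only up to dimension $2n-2$, not up to $h^0(N_f)$. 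Hence an unbalanced splitting type yields no conclusion about the actual family dimension. Relatedly, the asserted dictionary ``minimal $\iff$ balanced splitting, by bend-and-break'' is exactly what would need proof, and it is delicate here: since $K_X=0$, the manifold $X$ is not uniruled, rational curves on it are never free in $X$, and the standard bend-and-break/splitting-type theory for minimal free curves applies only inside the swept locus $U$, which is a proper, generally singular subvariety; controlling the normal directions of $U$ in $X$ is precisely what the coisotropy theorem (Theorem~\ref{th:criteriaminimality}) is for. To close the gap one should argue geometrically as in \cite{AV1401}: assuming the family has dimension exactly $2n-2$, bound the family of curves through a general point of $U$ by $c-1$ and run bend-and-break inside the $c$-dimensional fibres of the rational quotient to produce a smaller curve unless $C$ is minimal. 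As written, your proposal establishes (modulo the imported inputs) only the ``only if'' half of the equivalence.
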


However we will use the following criterion  that allow to check the minimality of a rational curve on IHSM. First of all, recall the definition.

\begin{definition}
Let $Z$ be a K\"{a}hler manifold covered by rational curves. Let 
$$
r \colon Z \dashrightarrow Q
$$
\noindent be an almost holomorphic fibration such that its general fibre is rationally connected. Then the fibration $r$ is called \textit{the rational quotient } of $Z.$

\end{definition}

\begin{Th}[{\cite[Theorem 4.4]{AV1401}}]\label{th:criteriaminimality}
Let $X$ be an IHSM and $C \subset X$ be a rational curve. Let $Z$ be an irreducible component of the locus covered by the deformations of~$C$ on $X.$ Then $Z$ is a coisotropic subvariety of $X.$ The fibres of the rational quotient of the desingularization of $Z$ have dimension equal to the codimension of~$Z$ in $X.$

\end{Th}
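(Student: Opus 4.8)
The plan is to translate both assertions into statements about the holomorphic symplectic form $\sigma$ restricted to $Z$. First I would pass to a desingularization $\nu\colon \tilde{Z}\to Z$ with $\tilde{Z}$ smooth projective, and form the rational quotient $r\colon \tilde{Z}\dashrightarrow Q$. Since $X$ carries the non-degenerate form $\sigma$ and $K_X=0$, the manifold $X$ is not uniruled, so $Z\subsetneq X$ is a proper uniruled subvariety and the general fibre $F$ of $r$ is a positive-dimensional rationally connected variety. Write $\nu'\colon \tilde{Z}\to X$ for the composite of $\nu$ with the inclusion $Z\hookrightarrow X$, and set $\sigma_{\tilde{Z}}:=(\nu')^{*}\sigma$, a global holomorphic $2$-form on the smooth projective variety $\tilde{Z}$. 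The whole problem is to understand the rank and kernel of $\sigma_{\tilde{Z}}$.

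The first key step is a descent statement. Because $F$ is rationally connected we have $H^{0}(F,\Omega^{2}_{F})=0$, and more generally global holomorphic forms on the smooth projective total space of a fibration with rationally connected general fibre are pulled back from a smooth model $Q'$ of the base. Hence $\sigma_{\tilde{Z}}=r^{*}\omega$ for a holomorphic $2$-form $\omega$ on $Q'$. Consequently, at a general point $z$ the kernel $\ker(\sigma_{\tilde{Z}})_{z}$ is the $dr_{z}$-preimage of $\ker\omega_{r(z)}$; in particular it contains the tangent space $T_{z}F$ to the fibre, and $\dim\ker(\sigma_{\tilde{Z}})_{z}=\dim F+\operatorname{corank}_{r(z)}\omega$. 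At a general smooth point of $Z$ the map $d\nu'_{z}$ is injective with image $T_{z}Z$, so $\ker(\sigma_{\tilde{Z}})_{z}$ is identified with $T_{z}Z\cap (T_{z}Z)^{\perp_{\sigma}}$, the null space of $\sigma|_{Z}$.

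Now I would feed in pointwise linear algebra in the symplectic vector space $(T_{z}X,\sigma_{z})$. Since $T_{z}Z\cap(T_{z}Z)^{\perp_{\sigma}}\subseteq (T_{z}Z)^{\perp_{\sigma}}$ and $\dim (T_{z}Z)^{\perp_{\sigma}}=2n-\dim Z=\operatorname{codim}_{X}Z$, the null space of $\sigma|_{Z}$ has dimension at most $\operatorname{codim}_{X}Z$, with equality precisely when $(T_{z}Z)^{\perp_{\sigma}}\subseteq T_{z}Z$, i.e.\ when $Z$ is coisotropic. Combining this with the previous computation yields $\dim F\le \dim\ker(\sigma_{\tilde{Z}})_{z}\le \operatorname{codim}_{X}Z$, so both conclusions of the theorem reduce to the single equality $\dim F=\operatorname{codim}_{X}Z$ together with non-degeneracy of $\omega$ (the latter forcing the null leaves of $\sigma|_{Z}$ to coincide exactly with the fibres of $r$, rather than being strictly larger).

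The hard part will be the reverse inequality $\dim F\ge \operatorname{codim}_{X}Z$ and the non-degeneracy of $\omega$; once these hold everything above is formal. For the dimension count I would use deformation theory of the covering curve. Since $K_{X}=0$ we have $\deg N_{C/X}=-2$, and the isomorphism $\sigma^{\flat}\colon T_{X}\xrightarrow{\sim}\Omega^{1}_{X}$ carries the line $T_{C}$ into the conormal sheaf of $C$, producing an exact sequence $0\to T_{C}^{\perp}/T_{C}\to N_{C/X}\to \mathcal{O}_{\mathbb{P}^{1}}(-2)\to 0$ in which $E:=T_{C}^{\perp}/T_{C}$ is a rank-$(2n-2)$ symplectic, hence self-dual, bundle of degree $0$. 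This self-duality pins down the positive part of $N_{C/X}$, and thereby the dimension of the family of deformations of $C$ sweeping out $Z$ and the dimension of the rationally connected fibre $F$; the symplectic structure is exactly what forces these mobile directions to fill out the full $\operatorname{codim}_{X}Z$-dimensional null space of $\sigma|_{Z}$, giving $\dim F=\operatorname{codim}_{X}Z$ and, with the linear-algebra bound, coisotropicity. Making this last step rigorous---carefully relating $h^{0}(N_{C/\tilde{Z}})$ to $\dim F$ and ruling out any degeneracy of $\omega$ transverse to the fibre directions---is where the real work lies, since, unlike the purely formal steps, it genuinely exploits both $K_{X}=0$ and the non-degeneracy of $\sigma$ on $X$.
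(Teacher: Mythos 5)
This statement is not proved in the paper at all: it is imported verbatim from Amerik--Verbitsky \cite{AV1401} (their Theorem 4.4), so the benchmark is their proof. The first half of your proposal reproduces that proof correctly: desingularize, form the rational quotient $r\colon\widetilde{Z}\dashrightarrow Q$, use the descent of holomorphic $2$-forms along fibrations with rationally connected general fibre to write $\sigma|_{\widetilde{Z}}=r^{*}\omega$, and then apply the symplectic linear algebra $\ker\bigl(\sigma|_{T_{z}Z}\bigr)=T_{z}Z\cap(T_{z}Z)^{\perp}\subseteq(T_{z}Z)^{\perp}$ to obtain $\dim F\le\dim\ker\bigl(\sigma|_{T_{z}Z}\bigr)\le\operatorname{codim}_{X}Z$, with coisotropicity equivalent to equality. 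All of this is right, and your observation that the whole theorem reduces to the single equality $\dim F=\operatorname{codim}_{X}Z$ is accurate.

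The genuine gap is that the decisive half, $\dim F\ge\operatorname{codim}_{X}Z$, is never established; you say yourself that making it rigorous "is where the real work lies," and the normal-bundle sketch you offer would not close it. For a smooth rational curve $C$ the sequence $0\to T_{C}^{\perp}/T_{C}\to N_{C/X}\to\mathcal{O}_{\mathbb{P}^{1}}(-2)\to 0$ only gives $\chi(N_{C/X})=2n-3$, so the self-duality of $E=T_{C}^{\perp}/T_{C}$ alone does not produce enough positivity: one needs Ran's semiregularity theorem, which is exactly the input Amerik and Verbitsky use at this point, guaranteeing that \emph{every} component of the space of deformations of a rational curve in a holomorphic symplectic $2n$-fold has dimension at least $2n-2$ (this is also what underlies Theorem~\ref{th:minimaliff2n-2}); moreover $C$ need not be smooth, so this must be run on the Barlet/Douady space, not via $N_{C/X}$. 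More seriously, even granting the bound $2n-2$, a second step is missing: converting "a $(2n-2)$-dimensional family of curves sweeping out $Z$" into "MRC fibres of dimension at least $2n-\dim Z$" requires (i) that for a \emph{very general} $z\in Z$ all deformations of $C$ through $z$ lie in the fibre $F_{z}$ (the maximality and almost-holomorphicity of the rational quotient), and (ii) finiteness of the subfamily of such curves through two general points of their union, which is where bend-and-break enters; without (ii) the union of an $m$-dimensional family of curves through a point can have dimension far smaller than $m+1$ (conics through a point of $\mathbb{P}^{2}$ already show this), so the naive dimension count fails. Neither ingredient appears in your plan, so as it stands the proposal proves only the inequality $\dim F\le\operatorname{codim}_{X}Z$, not the theorem.
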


The following proposition gives the convenient description of MBM classes, which will be used in the classification of MBM classes on $\mathrm{K3}$ and Kummer types manifolds (see Proposition 2.1 and the discussion after Proposition 2.2 in~\cite{AVsurvey}).

\begin{Prop}\label{prop:convenientdefinitionofMBM}
 The class $\alpha \in H^{1,1}(X,I) \cap H^2(X,\mathbb{Q})$ is  the MBM class if and only if its dual class $\widehat{\alpha} \in H_2(X,\mathbb{Q})$  is represented, up to rescaling, by a minimal  rational curve on some deformation of $X.$
\end{Prop}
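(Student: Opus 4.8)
The plan is to read the equivalence off Boucksom's description of the K\"{a}hler cone (Theorem~\ref{ThofBoucksom}) together with the minimality criteria (Theorems~\ref{th:minimaliff2n-2} and~\ref{th:criteriaminimality}), unwinding Definition~\ref{MBM1401} in both directions; the monodromy twist is absorbed geometrically via \cite{Verbit}, and the passage between complex structures is handled by the deformation invariance of the MBM property established in \cite{AV1401}.

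For the forward implication, suppose $\alpha$ is MBM. By Definition~\ref{MBM1401} there is $\gamma \in \mathrm{Mon}(X)$ and a birational model $(X,I')$ of $(X,I)$ such that $\gamma(\alpha)^{\perp}$ contains a wall $W$ of $\mathcal{K}(X,I')$. First I would eliminate the monodromy twist: by \cite[Theorem~7.2]{Verbit} the operator $\gamma$ is induced by a diffeomorphism $\phi$ of $X$, so pulling the complex structure back along $\phi$ produces a biholomorphic copy $(X,I'')$, with $I''=\phi^{*}I'$, on which the isometry $\phi^{*}=\gamma^{-1}$ carries $W$ into a wall contained in $\alpha^{\perp}$. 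Since birational IHS manifolds are deformation equivalent, $(X,I'')$ is itself a deformation of $(X,I)$. Next, Theorem~\ref{ThofBoucksom} says that $\mathcal{K}(X,I'')$ is carved out of the positive cone by the inequalities $q(\,\cdot\,,C)>0$ over rational curves $C$; hence the codimension-one face $W$ lies on $C_{0}^{\perp}$ for some rational curve $C_{0}$, and since $W$ already spans the hyperplane $\alpha^{\perp}$ we must have $\alpha^{\perp}=C_{0}^{\perp}$, so $\widehat{\alpha}$ is proportional to $[C_{0}]$ under the duality of Remark~\ref{homology}. Finally I would upgrade $C_{0}$ to a minimal curve: a wall of the K\"{a}hler cone is supported on an extremal ray, the generator of which sweeps out a covering family of the expected dimension, so by Theorem~\ref{th:minimaliff2n-2} (together with the coisotropic/rational-quotient picture of Theorem~\ref{th:criteriaminimality}) it is minimal. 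This exhibits $\widehat{\alpha}$, up to rescaling, as the class of a minimal rational curve on the deformation $(X,I'')$.

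For the converse, suppose $\widehat{\alpha}$ is, up to rescaling, the class of a minimal rational curve $C$ on a deformation $(X,I_{0})$. As $\alpha$ is a negative class we have $q([C])<0$, so by Theorem~\ref{ThofBoucksom} the hyperplane $C^{\perp}=\alpha^{\perp}$ bounds $\mathcal{K}(X,I_{0})$; here the minimality of $C$ (through Theorem~\ref{th:criteriaminimality}) is what ensures that $C^{\perp}$ meets $\overline{\mathcal{K}(X,I_{0})}$ in a codimension-one face, i.e.\ an honest wall, rather than in a deeper face. Thus $\alpha$ is MBM on $(X,I_{0})$ in the sense of Definition~\ref{MBM1401} with $\gamma=\mathrm{id}$. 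To return to the original complex structure I would invoke the deformation and monodromy invariance of the MBM property (\cite{AV1401}): since $(X,I_{0})$ and $(X,I)$ belong to the same deformation family and $\alpha$ remains of type $(1,1)$ along a suitable path, being MBM on $(X,I_{0})$ forces $\alpha$ to be MBM on $(X,I)$.

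The hard part will be this invariance step, namely making precise that walls of the K\"{a}hler cone---equivalently, minimal rational curve classes---persist under the deformations of complex structure that keep $\alpha$ of type $(1,1)$, and that reaching an inseparable point of the Teichm\"{u}ller space (Theorem~\ref{th:birationalnonsep}) is precisely reaching a birational model. The second delicate point is the assertion that each wall is supported by a \emph{minimal} curve class: one must exclude the possibility that the face $\alpha^{\perp}\cap\overline{\mathcal{K}}$ is cut out only by non-minimal curves, and it is exactly here that the covering-family dimension count of Theorem~\ref{th:minimaliff2n-2} and the coisotropic structure of Theorem~\ref{th:criteriaminimality} are needed.
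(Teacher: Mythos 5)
The paper never proves this proposition: it is imported verbatim from the Amerik--Verbitsky survey (see the sentence preceding the statement, which points to Proposition 2.1 and the discussion after Proposition 2.2 in~\cite{AVsurvey}, with the actual proofs in~\cite{AV1401}). So your proposal has to stand on its own, and it does not: the two steps you yourself flag as ``delicate'' are not technical loose ends but are precisely the content of the proposition, and neither follows from the results you invoke. The most serious gap is in the converse direction. Theorem~\ref{ThofBoucksom} only says that $\mathcal{K}(X,I_0)$ is the locus in the positive cone where $q(\cdot,C)>0$ for \emph{all} rational curves $C$; it gives no lower bound on the dimension of $\overline{\mathcal{K}(X,I_0)}\cap C^{\perp}$. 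The orthogonal of your minimal curve can perfectly well be screened off from the K\"ahler cone by the hyperplanes of other rational curves, so that $\overline{\mathcal{K}(X,I_0)}\cap C^{\perp}$ has codimension at least two and $C^{\perp}$ supports no wall of $\mathcal{K}(X,I_0)$, nor obviously of any birational model. Theorems~\ref{th:minimaliff2n-2} and~\ref{th:criteriaminimality} are silent on this point: they characterize minimality by the dimension of the deformation family and describe the swept locus as coisotropic, but say nothing about the position of $C^{\perp}$ relative to the K\"ahler cone. The implication ``negative minimal rational curve $\Rightarrow$ its orthogonal contains a wall of some birational model'' is exactly the Amerik--Verbitsky theorem (proved in~\cite{AV1401} by deforming to suitable projective models where an extremal contraction exists, then transporting the conclusion with the simultaneity statement, Proposition~\ref{prop:simultaneously}); invoking minimality ``through Theorem~\ref{th:criteriaminimality}'' here is circular. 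Note the claim has real content: combined with Theorem~\ref{TheoremK3} it forbids classes of the wrong square and divisibility from being represented by minimal rational curves on any deformation, so it cannot be a formal consequence of Boucksom's description.

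There is a parallel gap in the forward direction. From a wall $W\subset\alpha^{\perp}$, Boucksom's theorem (plus countability of curve classes) does produce a rational curve $C_0$ with $W\subset C_0^{\perp}$, hence $[C_0]$ proportional to $\widehat{\alpha}$; that part is fine. But your minimality upgrade --- that the generator of the extremal ray ``sweeps out a covering family of the expected dimension, so by Theorem~\ref{th:minimaliff2n-2} it is minimal'' --- is unjustified: an extremal rational curve supporting a wall may deform in more than $(2n-2)$ parameters, in which case it is \emph{not} minimal. The correct repair is to replace $C_0$ by a minimal rational curve in the uniruled locus swept by its deformations and then to prove that this minimal curve still has class proportional to $[C_0]$; that proportionality requires the structure of the associated extremal contraction (relative Picard number one in the projective case, then deformation invariance), not a dimension count. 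In short, both implications require the machinery of~\cite{AV1401} that the paper's citation encapsulates; your sketch correctly locates the difficulties but does not overcome them.
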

\begin{Prop}[{\cite[Corollary 5.13]{AV1401}}]\label{prop:simultaneously} A negative class $\alpha \in H^2(X,\mathbb{Z})$ is MBM or not simultaneously in all complex structures, where it is of type (1,1).
\end{Prop}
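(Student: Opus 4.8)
The statement asserts that MBM-ness of a negative class $\alpha\in H^2(X,\mathbb Z)$ is a property of $\alpha$ together with the deformation class of $X$, not of the particular complex structure $I$ in which $\alpha$ happens to be of type $(1,1)$. The plan is therefore to consider the locus
$$
\mathrm{Teich}_\alpha=\{\,I\in\mathrm{Teich}\mid \alpha\in H^{1,1}(X,I)\,\},
$$
and to show that the predicate ``$\alpha$ is MBM in $I$'' is constant on it. I would deduce this from two ingredients: (i) the locus $\mathrm{Teich}_\alpha$, after the identifications already built into the definition of MBM, is connected; and (ii) the MBM property is locally constant on $\mathrm{Teich}_\alpha$. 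The quickest route is to invoke Proposition~\ref{prop:convenientdefinitionofMBM}, whose right-hand side --- that $\widehat\alpha$ is represented up to rescaling by a minimal rational curve on some deformation of $X$ --- makes no reference to $I$ whatsoever; but since that criterion is essentially a repackaging of the present statement, I would rather establish (i) and (ii) directly.

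For (i) I would use the period map $\mathcal P\colon\mathrm{Teich}\to\Omega\subset\mathbb P(H^2(X,\mathbb C))$. A real class $\alpha$ is of type $(1,1)$ for $I$ exactly when $q(\mathcal P(I),\alpha)=0$, i.e. when the period lies in $\alpha^\perp$. Since $\alpha$ is negative, $q(\alpha)<0$, the lattice $\alpha^\perp$ has signature $(3,b_2-4)$, so the associated period subdomain
$$
\Omega_\alpha=\{\,[l]\in\mathbb P(\alpha^\perp\otimes\mathbb C)\mid q(l)=0,\ q(l,\bar l)>0\,\}
$$
has a three-dimensional positive part and is therefore connected. By Verbitsky's global Torelli theorem \cite{Verbit} the period map restricts to an isomorphism from each connected component of the birational Teichm\"{u}ller space onto $\Omega$, so the corresponding locus in $\mathrm{Teich}_b$ is connected. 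The non-Hausdorff points lying over it are precisely the birational models (Theorem~\ref{th:birationalnonsep}), which are already absorbed into Definition~\ref{MBM1401}, and the action of $\mathrm{Mon}(X)$, which permutes the components of $\mathrm{Teich}$, is likewise built into that definition; hence it suffices to connect any two members of $\mathrm{Teich}_\alpha$ by a path in a single component.

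For (ii) I would translate MBM-ness into the existence of a curve. By Theorem~\ref{ThofBoucksom} a wall of the K\"{a}hler cone of a birational model is cut out by the class of a minimal rational curve, so, unwinding Definition~\ref{MBM1401}, $\alpha$ is MBM at $I_0$ if and only if some rescaling of $\widehat\alpha$ is represented by a minimal rational curve $C$ on a model $(X,I_0')$. By Theorems~\ref{th:minimaliff2n-2} and~\ref{th:criteriaminimality} such a $C$ moves in a $(2n-2)$-parameter family sweeping out a coisotropic subvariety. I would then run an open--closed argument along a path in $\mathrm{Teich}_\alpha$ for the universal family: openness holds because a maximally moving minimal rational curve deforms sideways together with the complex structure as long as its class $\widehat\alpha$ stays of type $(1,1)$, which it does throughout $\mathrm{Teich}_\alpha$; closedness follows from the compactness of the cycle (Barlet) space of the K\"{a}hler manifold, applied to the volume-bounded family of minimal curves, which produces a limiting effective $1$-cycle of class $\widehat\alpha$ in the limiting complex structure.

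The main obstacle is the closedness step: I must rule out that in a limiting complex structure the minimal rational curve degenerates into a reducible or non-reduced cycle, none of whose components is itself an extremal minimal rational curve of class proportional to $\widehat\alpha$. This is exactly where the geometry of Theorem~\ref{th:criteriaminimality} is essential --- the minimality of the symplectic area $\int_C\omega$ over the swept-out coisotropic locus prevents the class from breaking into strictly smaller curve classes of the same type, so a minimal rational curve of the correct class survives in the limit. Combining the connectedness from (i) with the local constancy from (ii) then shows that $\alpha$ is MBM in one admissible complex structure if and only if it is MBM in all of them.
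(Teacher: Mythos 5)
First, a point of reference: the paper does not prove Proposition~\ref{prop:simultaneously} at all — it is quoted as a black box from [AV1401, Corollary~5.13]. So your attempt has to be measured against the proof in the cited source, and in outline you reproduce its architecture correctly: global Torelli, connectedness of the period domain attached to $\alpha^{\perp}$ (signature $(3,b_2-4)$, so the positive part is three-dimensional and the domain is connected), the monodromy and birational-model quantifiers of Definition~\ref{MBM1401} absorbing the several components of $\mathrm{Teich}$ and its non-Hausdorff points, and an open--closed argument along $\mathrm{Teich}_{\alpha}$. Your openness step is exactly the technical heart of Amerik--Verbitsky's paper (their sideways-deformation theorem for minimal rational curves, resting on Ran's deformation theory); it is not among the statements quoted in the present paper, but invoking it is legitimate since it is logically prior to the corollary you are proving.

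The genuine gap is in your closedness step. The claim that ``the minimality of the symplectic area $\int_C\omega$ over the swept-out coisotropic locus prevents the class from breaking into strictly smaller curve classes of the same type'' is not a valid argument, and the phenomenon it is meant to exclude actually occurs. Minimality, in the sense of Theorem~\ref{th:criteriaminimality}, concerns curves through a general point of a fixed uniruled subvariety of a fixed $(X,I)$; it gives no control over Barlet limits as the complex structure varies. When a path in $\mathrm{Teich}_{\alpha}$ meets $\mathrm{Teich}_{\beta}$ for another negative class $\beta$ — and any path may be forced to do so, for instance when an endpoint itself is non-generic — the limit cycle in class $\widehat{\alpha}$ can be a sum $\sum a_i[C_i]$ with no component proportional to $\widehat{\alpha}$: already on K3 surfaces a $(-2)$-class represented by an irreducible rational curve at nearby generic periods degenerates at special ones to a sum of two $(-2)$-curves of non-proportional classes. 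What rescues the statement is not area minimality but a two-step structure: (a) run the open--closed argument only on the locus of \emph{generic} points of $\mathrm{Teich}_{\alpha}$, i.e. those $I$ with $\mathrm{NS}(X,I)\otimes\mathbb{Q}=\mathbb{Q}\alpha$; this locus is the complement of countably many proper analytic subsets $\mathrm{Teich}_{\alpha}\cap\mathrm{Teich}_{\beta}$ in the connected $\mathrm{Teich}_{\alpha}$, hence connected, and there every component of a limit cycle is automatically of class proportional to $\widehat{\alpha}$, so closedness is immediate; (b) at a non-generic $I_1\in\mathrm{Teich}_{\alpha}$, deduce MBM-ness from MBM-ness at generic points not by a limit of curves but through the monodromy and birational-model quantifiers built into Definition~\ref{MBM1401} — in the K3 example it is a reflection in the Weyl group, not a surviving irreducible curve, that exhibits $\gamma(\alpha)^{\perp}$ as a wall. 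Without separating these two steps, your open--closed argument fails precisely at the points where it is needed.
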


\begin{Remark}\label{remark:vanishingofcurve}
By definition, an MBM class $\alpha$ on an IHSM $X$ up to a sign is represented by a curve on a sufficiently general deformation $X'$ of $X.$ This means that it must be represented by a possibly reducible curve on all deformations.

\end{Remark}

\begin{definition}
Let $X$ be an IHSM. Let $C$ be an MBM curve and let $B$ be the space of all deformations of $C.$  We call the \textit{MBM locus} of $C$ the union of all curves parametrized by $B.$
\end{definition}

\begin{definition}
Let $X$ be an IHSM. Let $\alpha \in H^2(X,\mathbb{Z})$ be an MBM class. Then we call the \textit{MBM locus} of the class $\alpha$ the union of all MBM loci of MBM curves $C$ such that $[C]$ is proportional to $\alpha.$ 
\end{definition}

\begin{Remark}\label{remark:criteriaminimality}
From Theorems~\ref{th:minimaliff2n-2} and~\ref{th:criteriaminimality} and \cite[Remark 1.2]{AV21} we get that to prove the minimality of the rational curve $C$ in IHSM $X$ one should find MBM locus $Z$ of $C$ and prove that it is a rational quotient  such that its fibres  of the desingularization of $Z$ have dimension equal to the codimension of~$Z$ in $X$ and the base of this fibration is not unirational.

\end{Remark}

Now we recall the theorem which states that MBM loci on the deformation equivalent IHSM  are real analytic isomorphic.

\begin{Th}[{\cite[Theorem 1.8]{AV21}}]\label{th:analiticallyequiv}
Let $X$ be an IHSM of $\mathrm{K3}$ or Kummer type and let $\alpha \in H^2(X,\mathbb{Q})$ be MBM and $T$ be the MBM locus on $X$ of the class $\alpha.$ Let $X'$ be a deformation of $X$ in $\mathrm{Teich}$ such that $\alpha$ is $(1,1)$ and is the class of a minimal rational curve on both $X$ and $X'.$ Let $T'$ be an MBM locus on $X'$ of $\alpha.$ Then $T$ and~$T'$ are real analytic isomorphic.
\end{Th}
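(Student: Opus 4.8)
The plan is to connect the two complex structures by a path in the Teichm\"uller space along which $\alpha$ stays the class of a minimal rational curve, and then to recognise $T$ and $T'$ as the two end fibres of a single real-analytic family. First I would introduce the locus $\mathcal{T}_\alpha \subset \mathrm{Teich}$ of complex structures $I$ for which $\alpha \in H^{1,1}(X,I)$. Under the period map this maps onto the subdomain $\{[\sigma] : q(\sigma)=0,\ q(\sigma,\bar{\sigma})>0,\ q(\sigma,\alpha)=0\}$ of the period domain, i.e. the period domain attached to the sublattice $\alpha^{\perp}$. Since $\alpha$ is negative, $\alpha^{\perp}$ retains three positive directions, so this subdomain is connected, and by local Torelli $\mathcal{T}_\alpha$ is a real-analytic submanifold of $\mathrm{Teich}$. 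By Proposition \ref{prop:simultaneously} the class $\alpha$ is MBM at every point of $\mathcal{T}_\alpha$; by Remark \ref{remark:vanishingofcurve} at a sufficiently general such point $\alpha$ or $-\alpha$ is represented by a minimal rational curve. The open subset $\mathcal{T}_\alpha^{+}$ where $\alpha$ itself is so represented contains both $X$ and $X'$ by hypothesis, and I would join them by a path $\gamma$ inside $\mathcal{T}_\alpha^{+}$, using the connectedness just established together with the fact that, by Theorem \ref{ThofBoucksom}, the two points lie on the same side of the wall $\alpha^{\perp}$ of the K\"ahler cone.

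Next I would follow the minimal rational curves of class $\widehat{\alpha}$ along $\gamma$. At each point of $\gamma$ the class $\widehat{\alpha}$ is of type $(1,1)$ and negative, so the curve persists; by Theorem \ref{th:minimaliff2n-2} its deformations sweep out a family of the fixed dimension $2n-2$, and by Theorem \ref{th:criteriaminimality} the resulting locus $Z$ is coisotropic, its desingularisation carrying a rational-quotient fibration whose fibre dimension equals $\operatorname{codim} Z$. Because the family of complex structures over $\gamma$ is real-analytic and each $Z$ is a holomorphic, hence real-analytic, subvariety, these loci fit together into a real-analytic family $\mathcal{Z} \to \gamma$ with $\mathcal{Z}_0 = T$ and $\mathcal{Z}_1 = T'$. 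Trivialising this family --- for instance by integrating a lift of $\partial_t$ with respect to the hyperk\"ahler metric furnished by the Calabi--Yau theorem --- would produce a real-analytic isomorphism $T \simeq T'$.

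The main obstacle will be proving that $\mathcal{Z}\to\gamma$ is an honest locally trivial real-analytic fibre bundle, that is, that the MBM locus does not jump or degenerate as $I$ varies. The numerical data that is constant along $\gamma$ --- the class $\widehat{\alpha}$, its negative square, and the deformation dimension $2n-2$ of Theorem \ref{th:minimaliff2n-2} --- pins down the generic fibre, but one must exclude the breaking of the minimal curves, and the associated collapse of $Z$, over special points of $\gamma$. Here the coisotropic and rational-quotient description of Theorem \ref{th:criteriaminimality}, combined with the properness of the space of stable maps, is decisive: it forces the fibration structure of $Z$, and hence its real-analytic isomorphism type, to be transported along $\gamma$ without change, since any genuine degeneration would alter invariants that are in fact constant. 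Specialising to the $\mathrm{K3}$ and Kummer deformation types, where the MBM loci admit an explicit description, is what finally secures this rigidity and hence the theorem.
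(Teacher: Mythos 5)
This statement is not proved in the paper at all: it is quoted verbatim as an external result, namely \cite[Theorem 1.8]{AV21}, and the paper only uses it as a black box (via Remark~\ref{remark:criteriaminimality} and in Corollaries~\ref{cor:q>2thusmbmK3} and~\ref{cor:q>2thusmbmKummer}). So there is no internal proof to compare yours against; what can be assessed is whether your reconstruction would stand on its own, and it would not.

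The genuine gap is exactly at the point you flag as ``the main obstacle'': local triviality of the family $\mathcal{Z}\to\gamma$ of MBM loci. Your argument for it --- that ``any genuine degeneration would alter invariants that are in fact constant'' --- is circular, because the invariants you list (the class $\widehat{\alpha}$, its square, and the deformation dimension $2n-2$ from Theorem~\ref{th:minimaliff2n-2}) do not determine the real-analytic isomorphism type of the locus; the theorem's conclusion is precisely that this type is constant, and that cannot be extracted from constancy of numerical data. Worse, the properness of the space of stable maps works \emph{against} you, not for you: properness guarantees that limits of minimal rational curves exist, but those limits may be reducible or non-reduced cycles, i.e.\ the minimal curve may break over special points of $\gamma$ (for instance where the Picard rank jumps along $\mathcal{T}_\alpha$), and ruling this out is the actual content of the Amerik--Verbitsky theorem. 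Their proof requires machinery entirely absent from your sketch: the structure theory of the associated birational contractions, twistor/hyperk\"ahler arguments, and type-specific input for $\mathrm{K3}^{[n]}$ and Kummer manifolds --- your final sentence invokes this ``explicit description'' but does not use it. There is also a secondary gap earlier: connectedness of $\mathcal{T}_\alpha$ (which follows from the period-domain description) does not give connectedness of the open subset $\mathcal{T}_\alpha^{+}$ where $\alpha$ itself is represented by a minimal curve, and your appeal to Theorem~\ref{ThofBoucksom} to join $X$ and $X'$ inside $\mathcal{T}_\alpha^{+}$ conflates a wall condition in a fixed $H^{1,1}(X,\mathbb{R})$ with a statement about paths in the Teichm\"uller space. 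If you want this theorem, cite it; a self-contained proof is a paper-length undertaking, not a paragraph.
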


Now we recall the definition of the period map and the theorem about chamber decomposition of the positive cone.

\begin{definition}
Let $X$ be an IHSM. Denote by $\text{Per}$ the map 
 $$
\mathrm{Per}:\mathrm{Teich} \to \mathbb{P}H^2(X,\mathbb{C})
$$
\noindent such that $\text{Per}(I)=H^{2,0}(X,I) \in \mathbb{P}H^2(X,\mathbb{C})$ for a complex structure $I \in \text{Teich}.$ The image of $\text{Per}$ is 
$$
\mathbb{P}\text{er}=\{l \in \mathbb{P}H^2(X,\mathbb{C}) \mid q(l)=0,\; q(l+\bar{l})>0 \}.
$$
\end{definition}

\begin{definition}
Let $\alpha$ be an MBM class. The space $\text{Teich}_{\alpha}$ consists of complex structures $I$ such that $\alpha$ is of type (1,1) with respect to $I.$ We have 
$$
\text{Teich}_{\alpha}=\text{Per}^{-1}(\alpha^{\perp}).
$$
\end{definition}

\begin{Th}[{\cite[Theorem 1.19]{AV1401}, \cite[Theorem 1.4]{Markman}}]
Let $X$ be an IHSM and~$I$ be its complex structure. Then the points in~\mbox{$\textup{Per}^{-1}\left(\textup{Per}(I)\right)$} are in  one-to-one correspondence with the K\"{a}hler chambers, i.e. the connected components of 
$$
\mathcal{BK}_X=\mathrm{Pos}(X) \setminus \bigcup_{z \in \textup{MBM classes}} z^{\perp}.
$$ 
\end{Th}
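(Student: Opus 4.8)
The plan is to construct the correspondence directly and then verify bijectivity using Boucksom's description of the Kähler cone (Theorem \ref{ThofBoucksom}), the global Torelli theorem encoded in the structure of $\mathrm{Teich}$, and Huybrechts' inseparability result (Theorem \ref{th:birationalnonsep}). First I would fix the period point $p=\mathrm{Per}(I)$ and observe that every $J\in\mathrm{Per}^{-1}(p)$ carries the same Hodge data on $H^2(X,\mathbb{Z})$ as $I$: the subspace $H^{1,1}_{\mathbb{R}}$, the positive cone $\mathrm{Pos}(X)$, and, crucially, the set of MBM classes of type $(1,1)$ all depend only on $p$. The last point uses Proposition \ref{prop:simultaneously}: whether a negative class is MBM is independent of the complex structure in which it is of type $(1,1)$, so the wall arrangement $\bigcup_z z^{\perp}$, and hence the set of connected components of $\mathcal{BK}_X$, is intrinsic to $p$. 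This lets me define a map $\Phi$ sending $J\in\mathrm{Per}^{-1}(p)$ to its Kähler cone $\mathcal{K}(X,J)\subset\mathrm{Pos}(X)$.

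Next I would show that $\Phi$ is well defined, i.e. that each $\mathcal{K}(X,J)$ is exactly one connected component of $\mathcal{BK}_X$. By Theorem \ref{ThofBoucksom}, the cone $\mathcal{K}(X,J)$ consists of the classes $\alpha\in\mathrm{Pos}(X)$ with $q(\alpha,C)>0$ for every rational curve $C$ on $(X,J)$; since only curves of negative square contribute walls meeting $\mathrm{Pos}(X)$, and the classes of such extremal curves are precisely the MBM classes (Proposition \ref{prop:convenientdefinitionofMBM}), the cone $\mathcal{K}(X,J)$ is cut out inside $\mathrm{Pos}(X)$ by a subfamily of the hyperplanes $z^{\perp}$. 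Being open, convex and disjoint from every wall, it lies in a single chamber; and because its boundary walls are themselves MBM walls, it cannot be a proper subset of a chamber, so it equals one. Hence $\Phi$ takes values in the set of chambers.

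For injectivity of $\Phi$ I would invoke the global Torelli theorem together with Theorem \ref{th:birationalnonsep}: the period map identifies $\mathrm{Teich}_b$ with $\mathbb{P}\mathrm{er}$, so the fibre $\mathrm{Per}^{-1}(p)\subset\mathrm{Teich}$ consists of the pairwise inseparable points lying over one point of $\mathrm{Teich}_b$, which by Huybrechts' theorem are mutually birational models. Distinct such models have distinct Kähler cones inside the common positive cone, since once the period is fixed the Kähler cone recovers the point of $\mathrm{Teich}$; thus $\Phi$ is injective. For surjectivity, given a chamber $\mathcal{C}$ I would choose $\omega\in\mathcal{C}$ and deform: along a generic path of periods through $p$ keeping $\omega$ of type $(1,1)$, a very general member has no $(1,1)$ MBM class obstructing $\omega$, so $\omega$ is Kähler there, and parallel transport back to $p$ without crossing a wall keeps $\omega$ Kähler, producing $J\in\mathrm{Per}^{-1}(p)$ with $\mathcal{K}(X,J)=\mathcal{C}$.

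I expect the main obstacle to be surjectivity, namely realizing every chamber as an honest Kähler cone. This needs both the surjectivity of the period map and the delicate deformation argument that a chamber-interior class stays Kähler under transport avoiding the walls; controlling the variation of the Kähler cone in a family, so that no new negative curve class appears to cut $\omega$ off, is the technical heart, and it is exactly where the full strength of the MBM-class theory and Verbitsky's Torelli theorem enters.
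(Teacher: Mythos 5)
The statement you are proving is one the paper does not prove at all: it is quoted verbatim from the literature (\cite[Theorem 1.19]{AV1401}, \cite[Theorem 1.4]{Markman}), so there is no internal proof to compare against, and your proposal must be judged as a free-standing argument. Its architecture (a map $\Phi: J \mapsto \mathcal{K}(X,J)$ from the fibre to the set of chambers, then well-definedness, injectivity, surjectivity) is indeed the shape of the actual proofs in those references, and your well-definedness step can be made to work with the paper's ingredients --- though note that to see $\mathcal{K}(X,J)$ is disjoint from \emph{every} MBM wall, not just those coming from curves on $(X,J)$ itself, you need Remark~\ref{remark:vanishingofcurve} (an MBM class of type $(1,1)$ is represented by a possibly reducible rational curve in every such complex structure, so any Kähler class pairs nonzero with it); Boucksom's theorem alone only handles walls of curves existing on $(X,J)$.

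The genuine gaps are in the other two steps. For injectivity, your justification ``once the period is fixed the Kähler cone recovers the point of $\mathrm{Teich}$'' is exactly the statement of injectivity, restated rather than proved; the missing ingredient is Huybrechts-type rigidity: two inseparable points of $\mathrm{Teich}$ are related by a bimeromorphic map (Theorem~\ref{th:birationalnonsep}) inducing a Hodge isometry on $H^2$, and if that isometry carries a Kähler class to a Kähler class then the bimeromorphic map is biregular, forcing the two points of $\mathrm{Teich}$ to coincide. Without this rigidity statement $\Phi$ could a priori be non-injective. For surjectivity, the claim ``parallel transport back to $p$ without crossing a wall keeps $\omega$ Kähler'' is the crux and is not established: Kählerness of a fixed class is an \emph{open} condition along your lifted path (Kodaira stability), but you need it at the endpoint, i.e.\ a closedness statement, and the wall arrangement itself varies along the path (the set of $(1,1)$ MBM classes at intermediate periods is not contained in the set at $p$), so ``not crossing a wall'' at time $t$ does not formally control the limit. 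The literature closes this by combining the openness argument with the inseparability theorem applied to the limit point (or, in Markman's treatment, by the monodromy/birational transitivity on chambers); you correctly identify this as the technical heart, but identifying it is not supplying it, so as written the proposal does not constitute a proof.
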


\noindent This means that the connected components (or chambers) of $\mathrm{Pos}(X)$ are K\"{a}hler (or ample) cones of $X$ and its birational models, as well as their monodromy images (see \cite{AV} and other papers of the authors). Here we use the following theorem:

\begin{Th}[{\cite[Theorem 1.3]{Markman}}]
Let $X$ and $Y$ be IHSM which are deformation equivalent. Then they are bimeromorphic if and only if there is a parallel transport operator $f:H^2(X,\mathbb{Z}) \to H^2(Y,\mathbb{Z})$ which is an isomorphism of integral Hodge structures.
\end{Th}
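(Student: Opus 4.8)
The plan is to prove the two implications separately. The forward direction (bimeromorphic $\Rightarrow$ existence of a parallel-transport Hodge isometry) is elementary, while the reverse direction rests on the global Torelli theorem together with Huybrechts' separation result, Theorem~\ref{th:birationalnonsep}.

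For the easy direction, suppose $\phi \colon X \dashrightarrow Y$ is a bimeromorphic map. Since $X$ and $Y$ are IHSMs, the indeterminacy loci of $\phi$ and of $\phi^{-1}$ have codimension at least two, so $\phi$ induces an isomorphism $\phi_* \colon H^2(X,\mathbb{Z}) \xrightarrow{\sim} H^2(Y,\mathbb{Z})$. I would then check that $\phi_*$ is a Hodge isometry: it preserves the Beauville--Bogomolov form because $\phi$ is an isomorphism in codimension one and the form is determined by the Fujiki relation~\eqref{eq:integral}, and it carries $H^{2,0}(X)$ to $H^{2,0}(Y)$ since the symplectic form extends across the indeterminacy locus. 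To promote $\phi_*$ to a parallel transport operator I would invoke Theorem~\ref{th:birationalnonsep}: bimeromorphic IHSMs are inseparable points of $\mathrm{Teich}$, which permits the construction of a family along which $\phi_*$ is realized as parallel transport.

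For the reverse direction, let $f \colon H^2(X,\mathbb{Z}) \to H^2(Y,\mathbb{Z})$ be a parallel transport operator that is an isomorphism of Hodge structures. The strategy is to show that $X$ and $Y$ determine the same point of the birational Teichm\"uller space $\mathrm{Teich}_b$. Since $X$ and $Y$ are deformation equivalent, I regard them as complex structures $I_X, I_Y$ on a common underlying smooth manifold, giving points of $\mathrm{Teich}$. The parallel-transport hypothesis on $f$ ensures that, after using $f$ to identify markings, $I_X$ and $I_Y$ lie in a single connected component of $\mathrm{Teich}$, and the Hodge-isometry hypothesis gives $f_{\mathbb{C}}\bigl(\mathrm{Per}(I_X)\bigr)=\mathrm{Per}(I_Y)$, so the two structures have equal period. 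The global Torelli theorem — in the form of the bijection between $\mathrm{Per}^{-1}(\mathrm{Per}(I))$ and the K\"ahler chambers recorded just above — then forces $I_X$ and $I_Y$ to map to the same point of $\mathrm{Teich}_b$, i.e. to be inseparable in $\mathrm{Teich}$. Applying Theorem~\ref{th:birationalnonsep} once more yields that $X$ and $Y$ are bimeromorphic.

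The main obstacle lives entirely in the reverse direction, and concretely it is the injectivity of the period map on each connected component of $\mathrm{Teich}_b$ (Verbitsky's global Torelli theorem), which I would cite rather than reprove. Two subtleties deserve care. First, one must genuinely use that $f$ is a \emph{parallel transport} operator, not merely a Hodge isometry, precisely to guarantee that $I_X$ and $I_Y$ sit in the same connected component, where the period map is injective. Second, the passage from equality of period points to an honest bimeromorphism requires Huybrechts' inseparability theorem, because distinct inseparable points of $\mathrm{Teich}$ correspond to different birational models of a single manifold. By comparison, the forward direction is routine once the codimension-two indeterminacy is exploited.
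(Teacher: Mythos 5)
The paper never proves this statement: it is quoted directly from Markman's survey (\cite[Theorem 1.3]{Markman}) and used as a black box, so there is no internal proof to compare against. Your reconstruction follows the standard route in the literature, and your reverse direction is sound and correctly wired to the tools this paper does record: the parallel-transport hypothesis places $I_X$ and $I_Y$ in one connected component of $\mathrm{Teich}$, the Hodge-isometry hypothesis equates their period points, Verbitsky's global Torelli theorem (Theorem~\ref{Torelli}) then forces them to coincide in $\mathrm{Teich}_b$, i.e.\ to be inseparable in $\mathrm{Teich}$, and Theorem~\ref{th:birationalnonsep} converts inseparability into bimeromorphy. You are also right that the parallel-transport hypothesis is exactly what guarantees injectivity of the period map applies (same component), and that a mere Hodge isometry would not suffice.

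There is, however, a citation error in your forward direction. You invoke Theorem~\ref{th:birationalnonsep} to promote $\phi_*$ to a parallel transport operator, but that theorem asserts the implication \emph{inseparable $\Rightarrow$ bimeromorphic}, which is the opposite of what you need at that point: to realize $\phi_*$ as parallel transport you need \emph{bimeromorphic $\Rightarrow$ inseparable}, in the strong form that there exist deformations $\mathcal{X}\to\Delta$ and $\mathcal{X}'\to\Delta$ of $X$ and $Y$ which are isomorphic away from the central fibre, compatibly with $\phi$, so that $\phi_*$ is exhibited as a composition of parallel transports and pullbacks along isomorphisms. That converse is true and is also a theorem of Huybrechts from the same paper \cite{Huybrechts1999}, but it is not the statement recorded here, and without it the forward implication is unsupported: a Hodge isometry induced by a bimeromorphic map is not, on its face, a parallel transport operator. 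With that citation repaired, your argument is complete modulo the quoted deep inputs (Verbitsky's Torelli theorem and Huybrechts' two results), which is an appropriate level of detail for a statement the paper itself treats as known.
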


 So our main goal is to find numerical characterization for these MBM classes for Beauville examples. For more details about the Teichm\"{u}ller space see~\mbox{\cite[Section~3]{AV}.}

\bigskip

\subsection*{Torelli theorem.} For IHSM there is a generalization of Global Torelli Theorem for $\mathrm{K3}$ surfaces, namely 

\begin{Th}[{\cite[Theorem 1.18]{Verbit}}]\label{Torelli}
For each connected component of the birational Teichm\"{u}ller space $\mathrm{Teich}_b^0,$  the period map $\textup{Per}: \textup{Teich}_b^0 \to \mathbb{P}\textup{er}$ is a diffeomorphism.
\end{Th}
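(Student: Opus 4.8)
The plan is to identify $\mathrm{Per}$ restricted to a connected component $\mathrm{Teich}_b^0$ with a covering map onto the simply connected period domain $\mathbb{P}\mathrm{er}$, and then to invoke simple connectivity. I would organize the argument into four steps: (i) $\mathrm{Per}$ descends to $\mathrm{Teich}_b$ and is étale there; (ii) $\mathbb{P}\mathrm{er}$ is connected and simply connected; (iii) $\mathrm{Per}$ is a covering map; (iv) conclude.

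For step (i), first I would check that the period genuinely factors through the Hausdorff reduction. Two inseparable points of $\mathrm{Teich}$ correspond, by Theorem~\ref{th:birationalnonsep}, to birational IHSM; a bimeromorphic map between IHSM induces a parallel-transport Hodge isometry of $H^2$, so inseparable points have the same period and $\mathrm{Per}$ factors through $\mathrm{Teich}_b=\mathrm{Teich}/{\sim}$. Next, $\mathrm{Teich}_b$ is a Hausdorff complex manifold: smoothness of the deformation space comes from Bogomolov--Tian--Todorov unobstructedness for IHSM, and $\dim\mathrm{Teich}=\dim H^{1,1}=b_2-2$. Finally $\mathrm{Per}$ is a local diffeomorphism onto its image by local Torelli, since the differential of $\mathrm{Per}$ is identified, via contraction with the holomorphic symplectic form $H^1(X,T_X)\cong H^{1,1}(X)$, with an isomorphism onto the tangent space of $\mathbb{P}\mathrm{er}$. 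Thus $\mathrm{Per}\colon\mathrm{Teich}_b\to\mathbb{P}\mathrm{er}$ is étale.

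For step (ii), a point $l=\mathbb{C}\sigma\in\mathbb{P}\mathrm{er}$ with $q(\sigma)=0$ and $q(\sigma,\bar\sigma)>0$ determines, writing $\sigma=x+iy$, an oriented positive-definite $2$-plane $\langle x,y\rangle\subset H^2(X,\mathbb{R})$, and conversely. Hence $\mathbb{P}\mathrm{er}$ is the Grassmannian of oriented positive $2$-planes in a space of signature $(3,b_2-3)$, a homogeneous space for $\mathrm{SO}_0(3,b_2-3)$ with connected stabilizer $\mathrm{SO}(2)\times\mathrm{SO}_0(1,b_2-3)$; a short computation with the homotopy exact sequence of this fibration shows it is connected and simply connected. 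Step (iii) is the heart of the matter, and here the hyperkähler structure is essential: every point of $\mathrm{Teich}$ carries a compatible hyperkähler metric, producing a twistor $\mathbb{CP}^1\subset\mathrm{Teich}$ whose image under $\mathrm{Per}$ is a \emph{twistor line} in $\mathbb{P}\mathrm{er}$ (the set of complex structures inside a fixed positive $3$-space). The key inputs are that twistor lines lift along the étale map $\mathrm{Per}$, that $\mathbb{P}\mathrm{er}$ is covered by twistor lines, and that any two generic points of $\mathbb{P}\mathrm{er}$ lie on a chain of twistor lines; combining lifting with the Hausdorffness of $\mathrm{Teich}_b$ gives the path-lifting property and constant fibre cardinality over the (open, hence by the chaining argument all of) image, so $\mathrm{Per}$ is a covering. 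For step (iv), a connected covering of the simply connected space $\mathbb{P}\mathrm{er}$ is a homeomorphism, and being étale it is a diffeomorphism; applying this to $\mathrm{Teich}_b^0$ yields the claim.

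The main obstacle is step (iii), and within it the injectivity of $\mathrm{Per}$ on $\mathrm{Teich}_b^0$. The non-Hausdorffness of $\mathrm{Teich}$ is precisely what forces the passage to the Hausdorff reduction and what the twistor-line machinery is designed to control: one must rule out that two distinct points of $\mathrm{Teich}_b$ over the same period both lie on the lift of a twistor line, which requires the global analysis of twistor families. This is where the hyperkähler geometry is indispensable and where the bulk of the work lies; steps (i), (ii) and (iv) are essentially formal once the covering property is in hand.
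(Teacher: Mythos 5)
This statement is not proved in the paper at all: it is Verbitsky's Global Torelli theorem, imported verbatim as \cite[Theorem 1.18]{Verbit}, so there is no internal proof to compare yours against. What you have written is a faithful outline of the architecture of Verbitsky's published proof: local Torelli makes $\mathrm{Per}$ \'etale, the period domain is homotopy equivalent to $S^2$ and hence simply connected, twistor lines give the covering property, and a connected covering of a simply connected space is a homeomorphism. As an outline it identifies the right ingredients.

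As a proof, however, it has a genuine gap, and you locate it yourself: step (iii) is not an argument but a list of the statements whose proofs constitute the bulk of Verbitsky's paper. An \'etale surjective map onto a simply connected manifold need not be a covering map in the absence of properness, so everything rests on the three ``key inputs'' you name --- that twistor lines lift along $\mathrm{Per}$, that they cover $\mathbb{P}\mathrm{er}$, and that any two points are joined by a chain of twistor lines admitting compatible lifts. Establishing these (in particular handling non-generic twistor lines, whose lifts need not be unique, and proving that generic ones are sufficiently abundant to run the chaining argument) is the actual content of the theorem; asserting them is close to assuming what is to be proved. A second, smaller gap sits in step (i): Bogomolov--Tian--Todorov gives smoothness of the Kuranishi space and hence of $\mathrm{Teich}$, but it does not follow formally that the Hausdorff reduction $\mathrm{Teich}_b$ of a non-Hausdorff manifold is again a smooth manifold on which the descended period map is \'etale; the Hausdorff reduction of a non-Hausdorff manifold can fail to be locally Euclidean, so this point needs (and in Verbitsky's treatment receives) a separate argument. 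In short, your proposal is a correct roadmap of the known proof, not a proof.
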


\begin{Cor}\label{TorelliCor}
Let $(X,I)$ be an IHSM with a complex structure $I.$ Consider the lattice $L=H^2(X,\mathbb{Z})$ equipped with the Beauville--Bogomolov form. Suppose that 
$$
\alpha \in L \cap H^{1,1}(X,I)
$$
\noindent is a negative vector, i.e. $q(\alpha)<0,$ and~\mbox{$\alpha \in L' \subset L,$} where $L'$ is a primitive sublattice of $L$ and $(L'^{\perp})_{\mathbb{R}}$ contains a positive $2$-plane. Then there is a deformation $(X,I')$ of~$(X,I)$ in $\mathrm{Teich}_{\alpha}$ such that $\mathrm{Pic}(X,I')=L'.$
\end{Cor}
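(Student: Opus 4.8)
The plan is to reduce the statement to an existence question about periods and then invoke the Global Torelli Theorem (Theorem~\ref{Torelli}). Recall that for a complex structure $I'$ with period point $\mathrm{Per}(I')=[\sigma']=H^{2,0}(X,I')$, the Hodge decomposition together with the fact that $H^{2,0}$ and $H^{0,2}=\overline{H^{2,0}}$ are $q$-isotropic lines gives $H^{1,1}(X,I')=\{x : q(x,\sigma')=q(x,\overline{\sigma'})=0\}$. For a real, in particular integral, class $x$ one has $q(x,\overline{\sigma'})=\overline{q(x,\sigma')}$, so that
\[
\mathrm{Pic}(X,I')=L\cap H^{1,1}(X,I')=\{x\in L : q(x,\sigma')=0\}=L\cap (\sigma')^{\perp}.
\]
Hence it suffices to produce a period point $l\in\mathbb{P}\mathrm{er}$ with $L\cap l^{\perp}=L'$ and $l\perp\alpha$; the second condition is automatic once $l\perp L'$, and it places the resulting complex structure in $\mathrm{Teich}_{\alpha}=\mathrm{Per}^{-1}(\alpha^{\perp})$.

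Set $N=(L')^{\perp}\subset L$ and consider the subdomain
\[
\Omega=\{\,l\in\mathbb{P}(N\otimes\mathbb{C}) : q(l)=0,\ q(l,\bar l)>0\,\}.
\]
Since $q(l+\bar l)=2q(l,\bar l)$ whenever $q(l)=0$, we have $\Omega\subset\mathbb{P}\mathrm{er}$, and every $l\in\Omega$ lies in $(L')^{\perp}\otimes\mathbb{C}$, so that $L'\subseteq L\cap l^{\perp}$. The domain $\Omega$ is nonempty precisely because $N_{\mathbb{R}}=(L'^{\perp})_{\mathbb{R}}$ contains a positive $2$-plane: taking an orthogonal basis $u,v$ of such a plane with $q(u)=q(v)>0$ and $q(u,v)=0$, the class $l=[u+iv]$ is isotropic and satisfies $q(l,\bar l)>0$.

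It remains to choose $l\in\Omega$ generic enough that $L\cap l^{\perp}$ does not exceed $L'$. Decompose any $x\in L$ as $x=x'+x''$ with $x'\in L'_{\mathbb{Q}}$ and $x''\in N_{\mathbb{Q}}$; since $L'\perp N$ and $l\in N_{\mathbb{C}}$ we obtain $q(x,l)=q(x'',l)$. If $x\notin L'$, then by primitivity of $L'$ we have $x\notin L'_{\mathbb{Q}}$, hence $x''\neq0$. For each such $x$ the locus $\{l\in\Omega : q(x'',l)=0\}$ is a closed analytic subset of $\Omega$, and it is \emph{proper}: the isotropic cone underlying $\Omega$ spans $N_{\mathbb{C}}$ (this is where the positive $2$-plane is used), so $q(x'',\cdot)$ can vanish identically on $\Omega$ only if $x''$ lies in the radical of $q|_{N}$; but that radical equals $N_{\mathbb{Q}}\cap L'_{\mathbb{Q}}\subseteq L'_{\mathbb{Q}}$, which together with $x'\in L'_{\mathbb{Q}}$ would force $x\in L'_{\mathbb{Q}}\cap L=L'$, a contradiction. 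As $L\setminus L'$ is countable and $\Omega$ is a complex manifold, hence a Baire space, the complement of the union of these proper closed subsets is dense; pick $l$ in it. Then $q(x,l)\neq0$ for every $x\in L\setminus L'$, so $L\cap l^{\perp}=L'$.

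Finally I apply Global Torelli. By Theorem~\ref{Torelli} the period map restricts to a diffeomorphism from the connected component $\mathrm{Teich}_b^0$ containing the class of $I$ onto $\mathbb{P}\mathrm{er}$; since $l\in\Omega\subset\mathbb{P}\mathrm{er}$, there is a point of $\mathrm{Teich}_b^0$, and thus a complex structure $I'$ deforming $I$, with $\mathrm{Per}(I')=l$. By construction $l\perp\alpha$, so $I'\in\mathrm{Teich}_{\alpha}$, and by the computation above $\mathrm{Pic}(X,I')=L\cap l^{\perp}=L'$, as required. The main obstacle is the genericity step of the third paragraph: one must check that each ``extra'' integral class cuts out a proper subset of $\Omega$, and the decisive input there is that the hypothesis on the positive $2$-plane both makes $\Omega$ nonempty and forces its isotropic cone to span $N_{\mathbb{C}}$, ruling out any integral class outside $L'$ being orthogonal to the whole period subdomain.
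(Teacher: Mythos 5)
Your proposal is correct and takes essentially the same route as the paper: pick a generic isotropic period point in $(L'^{\perp})\otimes\mathbb{C}$ (equivalently, a generic positive $2$-plane in $(L'^{\perp})_{\mathbb{R}}$, which determines the period line $[u+iv]$) and apply the Global Torelli Theorem~\ref{Torelli}, reading off $\mathrm{Pic}(X,I')=L\cap l^{\perp}$. The only difference is one of detail: the paper simply asserts ``if such a positive $2$-plane $M$ is generic, then $L\cap H^{1,1}(X,I')=L'$,'' whereas you substantiate exactly this genericity claim via the spanning-of-the-isotropic-cone and Baire-category argument, also handling a possibly degenerate restriction $q|_{(L')^{\perp}}$.
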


\begin{proof}
Let $M$ be a positive $2$-plane in $L'^{\perp}.$ It is not hard to see that we can find a vector $l \in M \otimes \mathbb{C}$ such that $q(l)=0$ and $q(l+\bar{l})>0.$  By Theorem~\ref{Torelli} there is a complex structure $I' \in \mathrm{Teich}_{\alpha}$ such that 
$$
\mathrm{Per}(I')=l \in  \mathbb{P}H^2(X,\mathbb{C}).
$$

\noindent If such a positive $2$-plane $M$ is generic, then 
$$
L \cap H^{1,1}(X,I')=L',
$$
\noindent but as $\mathrm{Pic}(X,I')=L \cap H^{1,1}(X,I'),$ we get the desired.

\end{proof}

\begin{Cor}\label{Corollary:deformation}
Let $(X,I)$ be a $\mathrm{K3}$ (respectively,~Kummer) type manifold and let
$$
L=H^2(X,\mathbb{Z})
$$
\noindent be the lattice equipped with the Beauville--Bogomolov form. Suppose that there is a primitive~\mbox{$\alpha \in L$} such that $\alpha \in H^{1,1}(X,I)$ and~\mbox{$q(\alpha)<0.$} Then the manifold~$X$ can be deformed to $Y$ in $\textup{Teich}_{\alpha}$ such that $Y=\mathrm{Hilb}^n(S)$ (respectively,~$Y=\mathrm{Kum}^n(A)$), where~$S$ is a $\mathrm{K3}$ surface with cyclic Picard group (respectively, $A$ is a complex two-dimensional  torus with cyclic N\'{e}ron-Severi group).
\end{Cor}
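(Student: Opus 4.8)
The plan is to deduce the statement from Corollary~\ref{TorelliCor} by choosing a suitable rank-two primitive sublattice $L' \subset L$ containing $\alpha$ whose associated birational model is forced to be a Hilbert scheme (respectively a generalised Kummer variety). Recall from the discussion of the Mukai lattice that there is an orthogonal decomposition
$$
L = H^2(X,\mathbb{Z}) \simeq \Lambda \oplus \mathbb{Z}e,
$$
where $\Lambda = H^2(S,\mathbb{Z})$ is the K3 lattice and $q(e) = -2(n-1)$ (respectively $\Lambda = H^2(A,\mathbb{Z})$ is the rank-six lattice of a complex two-torus and $q(e) = -2(n+1)$), the vector $e$ being the standard half-exceptional class. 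Writing the given primitive class as $\alpha = \beta + ke$ with $\beta \in \Lambda$ and $k \in \mathbb{Z}$, I let $x$ be a generator of the saturation of $\mathbb{Z}\beta$ in $\Lambda$ and set
$$
L' = \mathbb{Z}x \oplus \mathbb{Z}e
$$
(and $L' = \mathbb{Z}e$ in the degenerate case $\beta = 0$). Then $L'$ is primitive in $L$ and contains $\alpha$. Since $q(e) < 0$, the real span of $L'$ carries at most one positive direction, so at least two of the three positive directions of $L_{\mathbb{R}}$ survive in $(L'^{\perp})_{\mathbb{R}}$; hence $(L'^{\perp})_{\mathbb{R}}$ contains a positive $2$-plane and Corollary~\ref{TorelliCor} applies, producing a deformation $(X,I')$ in $\mathrm{Teich}_{\alpha}$ with $\mathrm{Pic}(X,I') = L'$.

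It then remains to recognise $(X,I')$ as a Hilbert scheme (respectively generalised Kummer variety) over a cyclic-Picard surface. As $e \in \mathrm{Pic}(X,I')$, the class $e$ is of type $(1,1)$, so the transcendental lattice of $(X,I')$ lies in $e^{\perp} = \Lambda$, and $\Lambda$ inherits a weight-two Hodge structure of K3 (respectively torus) type whose integral $(1,1)$-classes are exactly $\mathbb{Z}x$. By surjectivity of the period map for K3 surfaces (respectively for complex two-tori) there is a K3 surface $S$ (respectively a torus $A$) carrying this Hodge structure, and then $\mathrm{NS}(S) = \mathbb{Z}x$ (respectively $\mathrm{NS}(A) = \mathbb{Z}x$) is cyclic. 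Using the identification $H^2(\mathrm{Hilb}^n(S),\mathbb{Z}) = H^2(S,\mathbb{Z}) \oplus \mathbb{Z}e$ (respectively the Kummer analogue) together with the fact that $e$ is the standard exceptional class, the manifold $\mathrm{Hilb}^n(S)$ (respectively $\mathrm{Kum}^n(A)$) has the same period as $(X,I')$. By the Torelli theorem (Theorem~\ref{Torelli}) they define the same point of $\mathrm{Teich}_b$ and are therefore bimeromorphic, hence represent inseparable points of $\mathrm{Teich}$ lying over a common deformation of $X$. Since $\alpha \in \mathrm{Pic} = \mathbb{Z}x \oplus \mathbb{Z}e$ remains of type $(1,1)$ on this model, it lies in $\mathrm{Teich}_{\alpha}$, and I take it as the required $Y$.

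The lattice bookkeeping of the first paragraph is routine. The step I expect to demand the most care is the recognition in the second paragraph: one must argue that a K3 (respectively Kummer) type manifold whose Picard lattice is precisely $\mathbb{Z}x \oplus \mathbb{Z}e$, with $e$ in the distinguished monodromy orbit of the exceptional class, is genuinely bimeromorphic to a Hilbert scheme (respectively a generalised Kummer variety), rather than merely deformation equivalent to one. This is where surjectivity of the surface period map and the IHSM Torelli theorem must be combined, and where the fact that we kept the standard $e$ fixed throughout is essential, since it is precisely this that pins down the model to be of Hilbert (respectively Kummer) type.
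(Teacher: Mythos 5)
Your proof is correct and follows essentially the same route as the paper: apply Corollary~\ref{TorelliCor} to a rank-two sublattice spanned by $\alpha$ and the half-exceptional class $e$, then recognise the resulting manifold, whose period lies in $e^{\perp}$, as inseparable from a Hilbert scheme (respectively a generalised Kummer variety) by combining surjectivity of the surface period map with the Torelli theorem, exactly as the paper does when it asserts that the periods of Hilbert schemes sweep out the hyperplanes orthogonal to $e$ and its monodromy images. Your one refinement --- feeding the \emph{saturation} $\mathbb{Z}x \oplus \mathbb{Z}e$ of $\langle \alpha, e\rangle$ into Corollary~\ref{TorelliCor} --- is actually slightly more careful than the paper's own argument, which applies that corollary to $\langle \alpha, e \rangle$ itself even though this lattice need not be primitive in $H^2(X,\mathbb{Z})$ (e.g.\ for $\alpha = 2x+e$), while primitivity is part of the hypothesis of Corollary~\ref{TorelliCor}.
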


\begin{proof}

Let $e$ be  a class of half of the exceptional divisor on some deformation $Y$ of $X$ which is the Hilbert scheme of a~$\mathrm{K3}$ surface (respectively,~Kummer manifold). Note that the lattice $\langle \alpha, e \rangle$ is not positive definite, since both $q(\alpha)$ and $q(e)$ are negative. Thus, applying Corollary~\ref{TorelliCor} to the lattice~\mbox{$\langle \alpha, e \rangle$} we obtain a deformation $Y'$ of~$X,$ such that 
$$
\mathrm{Pic}(Y')=\langle \alpha, e \rangle.
$$
\noindent Such a manifold $Y'$ has the same period point in $\mathbb{P}\mathrm{er}$ as the Hilbert scheme of points on a  $\mathrm{K3}$ surface (respectively, the Kummer variety). Indeed, the period points of  Hilbert schemes (respectively,~Kummer varieties) in $\mathbb{P}\mathrm{er}$ form the union of the orthogonal hyperplanes to $e$ and to its monodromy images.  Hence $Y'$ is unseparable from Hilbert scheme~$Y=\mathrm{Hilb}^n(S)$ for some $\mathrm{K3}$ surface~$S$ (respectively, unseparable from Kummer variety $Y=\mathrm{Kum}^n(A)$ for some complex two-dimensional  torus~$A$) in the Teichm\"{u}ller space. In other words, we can deform $Y'$ to $Y$ which is the Hilbert scheme of points on a $\mathrm{K3}$ surface (respectively,~the Kummer variety) such that $Y$ and $Y'$ have the same Picard group.

\end{proof}

\bigskip

\subsection*{Strategy of the proof of Theorems~\ref{TheoremK3} and \ref{TheoremKummer}.} Let us consider the Teichm\"{u}ller space of $\mathrm{K3}$ or Kummer type manifold. Now we briefly explain our strategy for proof of Theorems \ref{TheoremK3} and \ref{TheoremKummer}.

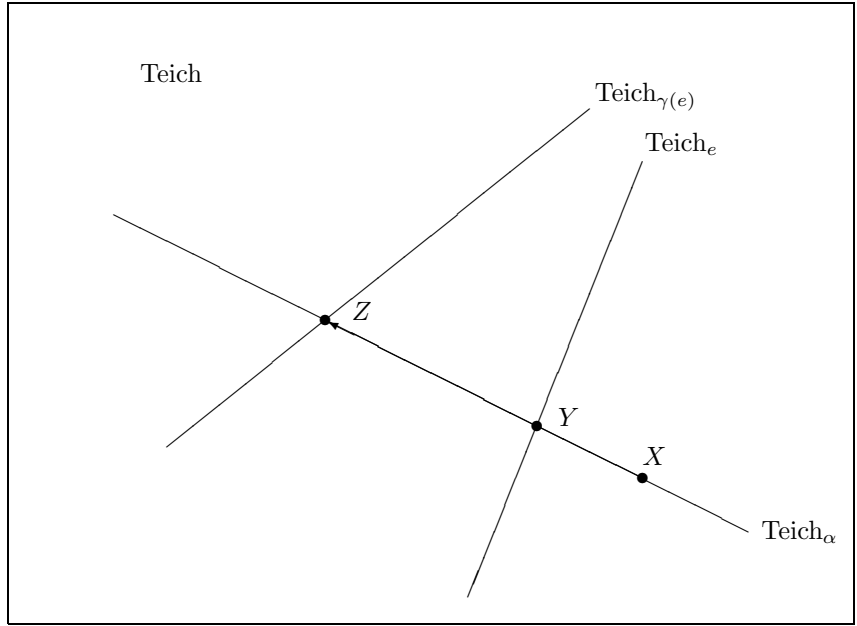
\begin{figure}
\begin{picture}(320,250)
\put(40,160){\line(2,-1){240}}
\put(240,180){\line(-2,-5){66}}
\put(220,200){\line(-5,-4){160}}
\put(240,60){\circle*{4}}
\put(200,80){\circle*{4}}
\put(120,120){\circle*{4}}
\put(240,60){\vector(-2,1){119}}
\put(285,37){$\text{Teich}_{\alpha}$}
\put(241,184){$\text{Teich}_{e}$}
\put(222,203){$\text{Teich}_{\gamma(e)}$}
\put(240,65){$X$}
\put(208,80){$Y$}
\put(130,120){$Z$}
\put(50,210){$\text{Teich}$}
\put(0,5){\line(1,0){320}}
\put(0,5){\line(0,1){235}}
\put(0,240){\line(1,0){320}}
\put(320,5){\line(0,1){235}}
\end{picture}
\caption{Teichm\"{u}ller space of IHSM $X.$}
\label{figure:teich}
\end{figure}

Let $(X,I)$ be an IHSM of $\mathrm{K3}$ (respectively, Kummer) type with a complex structure $I.$ Let 
$$
\alpha \in H^{1,1}(X,I) \cap H^2(X,\mathbb{Q})
$$
\noindent be a class of negative Beauville--Bogomolov square and $e \in H^2(X,\mathbb{Z})$ be a class of  half of the exceptional divisor. Figure~\ref{figure:teich} shows the Teichm\"{u}ller space of $X.$  The points correspond to the complex structures in $\text{Teich}.$ The point $X$ on Figure~\ref{figure:teich} corresponds to the complex structure of the original $X.$ 

In Step $1$ of the proof by Corollary~\ref{Corollary:deformation} we deform $X$ to $Y=\mathrm{Hilb}^n(S')$ for a $\mathrm{K3}$ surface $S'$ in the~\mbox{$\mathrm{K3}$} case or to $Y=\mathrm{Kum}^n(A')$ for a complex two-dimensional  torus~$A'$ in the Kummer case. In the picture the manifold $Y$ corresponds to some point which lies in the intersection of $\text{Teich}_{\alpha}$ and~\mbox{$\text{Teich}_{e}.$}

A general point of $\text{Teich}_{\alpha} \cap \text{Teich}_{e}$ does not lie on other $\text{Teich}_{\beta}$ for~\mbox{$\beta \in H^2(X,\mathbb{Z}),$} thus, we have $\text{Pic}(Y)=\langle \alpha, e\rangle.$ Hence we get that $\mathrm{Pic}(S')$ (respectively, $\mathrm{NS}(A')$) generated by  unique element.

In Step $2$ of the proof we deform $Y$ to $Z$ such that $Z$ lies in the intersection $\text{Teich}_{\alpha} \cap \text{Teich}_{\gamma(e)}$ and does not lie on other $\mathrm{Teich}_{\beta}$ for $\beta \in H^2(X,\mathbb{Z}),$ and the element $\gamma \in \text{Mon}(X)$ is chosen with certain restrictions which will be explained below. The manifold $Z$ is diffeomorphic to the variety $\gamma^{-1}(Z)$ with Picard group~\mbox{$\mathrm{Pic}(\gamma^{-1}(Z))=\langle  \gamma^{-1}(\alpha), e \rangle.$ } By  Corollary~\ref{TorelliCor} there is the Hilbert scheme of points on a $\mathrm{K3}$ surface (respectively, Kummer variety) with the same Picard lattice as~$\gamma^{-1}(Z)$ has. We will do all calculations on  the Hilbert scheme on points on $\mathrm{K3}$ surface (respectively, Kummer variety) as MBM property is invariant under deformations by Proposition~\ref{prop:simultaneously}.  

In Steps $3$ and $4$ we consider a rational curve corresponding to $\gamma^{-1}(\alpha)$ on $Z$ and give necessary and sufficient conditions on this rational curve to be MBM. 

In Step $5$ we prove that the description of MBM classes given in Theorems \ref{TheoremK3} and \ref{TheoremKummer} is full.

In Step $6$ we show that the numerical condition on $\alpha$  given in Theorems \ref{TheoremK3} and~\ref{TheoremKummer}  uniquely determines its monodromy orbit.

\section{Lattices  of $\mathrm{K3}$ and Kummer types manifolds}

In this section we collect some basic facts about lattices.

\begin{definition}\label{definition:divisib}
Let $L$ be a lattice and $L^{\vee}$ be its dual lattice, i.e.
$$
L^{\vee}=\{l^* \in L \otimes \mathbb{Q} \mid (l^*,l) \in \mathbb{Z} \; \text{for all}\; l \in L\}.
$$
\noindent Then the quotient group $L^{\vee}/L$ is called the discriminant group. For an element~\mbox{$l \in L$} denote by $d(l)$ the divisibility of $l,$ i.e. the positive integer number such that
$$
(l,L)=d(l)\mathbb{Z}.
$$
\noindent For $l \in L$ denote by $\delta(l)$  the image of $l/d(l)$ in the  discriminant group $L^{\vee}/L.$
\end{definition}

\begin{definition}
Let $L$ be a lattice. If $\alpha \in L$ is a primitive element, then the element 
$$
\widehat{\alpha}=\frac{\alpha}{d(\alpha)} \in L^{\vee}
$$
\noindent  is called the dual element to $\alpha.$
\end{definition}

\begin{Example}\label{ex:K3lattice}
The lattice of the second cohomology group for~$\mathrm{K3}$ type manifolds of dimension $2n$ is $L=U^{\oplus 3} \oplus E(-8)^{\oplus 2} \oplus \langle -2(n-1) \rangle.$ Thus, we have
$$
L^{\vee}/L \simeq \mathbb{Z}/{2(n-1)}\mathbb{Z}.
$$
\end{Example}

\begin{Example}\label{ex:Kummerlattice}
Similarly, the lattice of the second cohomology group for Kummer type manifolds of dimension~$2n$ is $L=U^{\oplus 3} \oplus \langle -2(n+1) \rangle.$ Thus, we have
$$
L^{\vee}/L \simeq \mathbb{Z}/{2(n+1)}\mathbb{Z}.
$$
\end{Example}

Recall that the lattice $(L,q)$ of the second cohomology of IHSM with Beauville--Bogomolov form is the indefinite lattice of signature $(3,b_2-3),$ where $b_2$ is the second Betti number.

Denote by $O(L^{\vee}/L)$  the subgroup of $\mathrm{Aut}(L^{\vee}/L)$ consisting of multiplication by all elements of $t \in L^{\vee}/L$ (viewed as $\mathbb{Z}/2k\mathbb{Z}$ for natural number $k$) such that~\mbox{$t^2=1.$} Let~$\mathrm{Gr}^+(L)$ be the Grassmannian parametrizing maximal positive definite subspaces in~$L_{\mathbb{R}}$ and let $\mathrm{Gr}^{+,\mathrm{or}}(L)$ be the oriented Grassmannian parametrizing oriented maximal positive definite subspaces in $L_{\mathbb{R}}.$ Note that $\mathrm{Gr}^{+,\mathrm{or}}(L)$ is a double cover of $\mathrm{Gr}^+(L).$ For the Grassmannian  $\mathrm{Gr}^+(L)$ we have the following proposition.

\begin{Prop}\label{Prop:grass}
Let $(L,q)$ be an indefinite lattice. Then~$\mathrm{Gr}^+(L),$ parametrizing   maximal positive definite subspaces in $L_{\mathbb{R}},$ is contractible.
\end{Prop}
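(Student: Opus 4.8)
The plan is to exhibit an explicit homeomorphism between $\mathrm{Gr}^+(L)$ and an open convex subset of a finite dimensional real vector space, after which contractibility is immediate. Let $(p,m)$ be the signature of $q$ on $L_{\mathbb{R}}$; since $L$ is indefinite we have $p,m\geqslant 1$, and by Sylvester's law of inertia every maximal positive definite subspace has dimension exactly $p$, so $\mathrm{Gr}^+(L)$ is an open subset of the ordinary real Grassmannian $\mathrm{Gr}(p,L_{\mathbb{R}})$ and carries the induced topology. First I would fix once and for all a base point $V_0\in\mathrm{Gr}^+(L)$ and let $W_0=V_0^{\perp}$ be its $q$-orthogonal complement, so that $W_0$ is negative definite of dimension $m$ and $L_{\mathbb{R}}=V_0\oplus W_0$ is an orthogonal direct sum.

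Next I would show that every $V\in\mathrm{Gr}^+(L)$ is the graph of a linear map $V_0\to W_0$. The key point is that $V\cap W_0=0$: a nonzero vector in this intersection would have $q$-value both strictly positive (as it lies in the positive definite $V$) and strictly negative (as it lies in the negative definite $W_0$), which is absurd. Hence the projection $V\to V_0$ along $W_0$ is injective, and by dimension count it is an isomorphism; so there is a unique $f_V\in\mathrm{Hom}(V_0,W_0)$ with $V=\{v+f_V(v)\mid v\in V_0\}$. The assignment $V\mapsto f_V$ is then a continuous bijection onto its image with continuous inverse $f\mapsto\mathrm{graph}(f)$, because this is precisely the standard affine chart on $\mathrm{Gr}(p,L_{\mathbb{R}})$ centered at $V_0$.

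It remains to identify the image. Equip $V_0$ with the positive definite form $q|_{V_0}$ and $W_0$ with the positive definite form $-q|_{W_0}$, and write $\|\cdot\|$ for the associated norms. For $v\in V_0$, using $V_0\perp W_0$, one computes
\[
q\bigl(v+f(v)\bigr)=q(v)+q\bigl(f(v)\bigr)=\|v\|^2-\|f(v)\|^2,
\]
so the graph of $f$ is positive definite if and only if $\|f(v)\|<\|v\|$ for every nonzero $v$, that is, if and only if the operator norm $\|f\|$ is strictly less than $1$ (the supremum being attained on the compact unit sphere). Thus $V\mapsto f_V$ identifies $\mathrm{Gr}^+(L)$ homeomorphically with the open unit ball $\{f\in\mathrm{Hom}(V_0,W_0)\mid\|f\|<1\}$. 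This ball is a star-shaped subset of the finite dimensional real vector space $\mathrm{Hom}(V_0,W_0)\cong\mathbb{R}^{pm}$, so the straight-line homotopy $(f,t)\mapsto tf$ (which stays inside the ball since $\|tf\|\leqslant\|f\|<1$ for $t\in[0,1]$) contracts it to the origin, corresponding to $V_0$. Therefore $\mathrm{Gr}^+(L)$ is contractible.

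The only genuinely delicate point is the bicontinuity of the parametrization $V\mapsto f_V$; everything else is linear algebra together with the elementary positivity computation above. I expect to dispose of it by simply invoking the fact that the graph parametrization over $V_0$ is the standard smooth affine chart on the Grassmannian $\mathrm{Gr}(p,L_{\mathbb{R}})$ around $V_0$, so continuity in both directions is built into the chart and requires no separate estimate.
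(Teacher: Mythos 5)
Your proof is correct and follows essentially the same route as the paper's: parametrize maximal positive definite subspaces as graphs of linear maps from a fixed positive definite subspace to its negative definite orthogonal complement, then contract by scaling the graphing map $f \mapsto tf$. The only difference is cosmetic — you additionally identify the image of the chart as the open unit ball in operator norm, whereas the paper simply observes that positivity of $q$ on the graph of $\phi$ persists for $t\phi$, $t\in[0,1]$, which already yields the contraction.
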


\begin{proof}
Let $L_{\mathbb{R}}=V_+ \oplus V_{-}$ be a decomposition of the vector space $L_{\mathbb{R}}$ in the sum of vector subspaces such that~\mbox{$q\vert_{V_+}$} is positive definite and $q\vert_{V_-}$ is negative definite. Assume that~\mbox{$V_+ \perp V_-$} with respect to $q.$ Let $U \subset L_{\mathbb{R}}$ be a maximal positive subspace. Then~$U$ can be seen as a graph of some linear map $V_+ \to V_-.$ Indeed, the map $f:U \to V_+ \times V_-$ can be defined as 
$$
U \ni u \mapsto (\mathrm{pr}_+(u), \mathrm{pr}_-(u)),
$$

\noindent where $\mathrm{pr}_+$ is a projection to the subspace $V_+$ and $\mathrm{pr}_-$ is a projection to the subspace~$V_-.$ Obviously, the image is isomorphic to $V_+,$ because the kernel of the composition $\mathrm{pr}_+ \circ f$ is zero, since $U \cap V_-=0$ and $\dim U= \dim V_+.$ 

So let $U$ be the graph of the linear map $\phi.$ Then $U$ is positive definite if and only if for any non-zero $v \in V_+$ we get 
\begin{equation}\label{eq:grassmannian}
q(v+\phi(v))>0.
\end{equation}
\noindent As \eqref{eq:grassmannian} holds for any $t\phi,$ where $t \in [0,1],$  we obtain the contraction of $\mathrm{Gr}^+(L).$

\end{proof}

Therefore, since $L$ is indefinite, by Proposition \ref{Prop:grass}  we get that $\mathrm{Gr}^+(L)$ is contractible and, thus, the Grassmannian $\mathrm{Gr}^{+,\mathrm{or}}(L)$ has two connected components. Denote by~$O^+(L)$ the subgroup of $O(L)$ acting trivially on $\pi_0\left(\mathrm{Gr}^{+,\mathrm{or}}(L)\right)$ and by~$SO^+(L)$ the subgroup of $O^+(L)$ which consists of the elements with determinant equal to $1.$ Let 
$$
\widetilde{O(L)} \,=\,\{g \in O(L) \mid  g(l^*) \equiv l^*  \mod L \quad \text{for all} \; l^* \in L^{\vee} \}
$$
\noindent and let $\widetilde{SO^+(L)}$ be the subgroup of $\widetilde{O(L)}$ which consists of the element with determinant equal to $1$ and acting trivially on $\pi_0\left(\mathrm{Gr}^{+,\mathrm{or}}(L)\right).$

Now let us recall a basic theorem about lattices, which we shall use next.

\begin{Th}[{\cite[Lemma 3.5]{Gritsenko} and \cite[Lemma 2.6]{MonRapag}}]\label{Gritsenko}
Let $L$ be a lattice containing two orthogonal isotropic planes.  Then  the $\widetilde{SO^+(L)}$-orbit of a primitive vector $l \in L$ is uniquely determined  by two invariants: $q(l)$ and~$\delta(l).$

\end{Th}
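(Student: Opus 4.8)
The statement is an instance of \emph{Eichler's criterion}, and the plan is to prove it by the method of Eichler transvections, reconstructing the argument of the two cited lemmas. One direction is immediate: $q(l)$ is preserved by every element of $O(L)$, while $\delta(l)$ is preserved by every element of $\widetilde{O(L)}$ by the very definition of that group, and $\widetilde{SO^+(L)} \subseteq \widetilde{O(L)}$. Hence $l \mapsto (q(l),\delta(l))$ is constant on $\widetilde{SO^+(L)}$-orbits, and the entire content is the converse: that two primitive vectors sharing the same $q$ and the same $\delta$ lie in one orbit. I also record a fact used throughout, namely that the divisibility $d(l)$ is recovered from $\delta(l)$, since for a primitive $l$ one has $d(l)=\operatorname{ord}(\delta(l))$ in $L^\vee/L$; so fixing $\delta(l)$ fixes $d(l)$. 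For the structural input I would read the hypothesis as providing two orthogonal hyperbolic planes, i.e. a unimodular sublattice $U_1\oplus U_2\subseteq L$; since a nondegenerate unimodular sublattice is an orthogonal direct summand, $L=U_1\oplus U_2\oplus K$ with $U_i=\langle e_i,f_i\rangle$, $(e_i,f_i)=1$, $e_i^2=f_i^2=0$, and $K=(U_1\oplus U_2)^\perp$. Unimodularity of $U_1\oplus U_2$ then gives $L^\vee/L\cong K^\vee/K$, so each discriminant class has a representative supported on $K$.

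Next I would introduce, for an isotropic $u\in L$ and $r\in u^\perp\cap L$ (with $(r,r)$ even, which holds as the relevant lattices are even), the Eichler transvection
\begin{equation*}
t(u,r)\colon x\longmapsto x-(r,x)\,u+(u,x)\,r-\tfrac{1}{2}(r,r)(u,x)\,u .
\end{equation*}
A direct check shows $t(u,r)$ is an isometry; it is unipotent, hence $\det=1$ and it lies in the identity component, so in $SO^+(L)$; and it fixes $L^\vee/L$ pointwise, because for $x\in L^\vee$ one has $(r,x),(u,x)\in\mathbb{Z}$ while $u,r\in L$, whence $t(u,r)(x)-x\in L$. Thus every $t(u,r)\in\widetilde{SO^+(L)}$, and it suffices to prove that the subgroup $E(L)$ generated by these transvections already acts transitively on the primitive vectors with fixed $q$ and $\delta$; then, since orbits are contained in fibers by invariance, the fibers must coincide with the $\widetilde{SO^+(L)}$-orbits.

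Transitivity is obtained by reducing an arbitrary primitive vector to a normal form depending only on $(q(l),\delta(l))$. The computational engine is that applying $t(e_1,r)$ with $r\in U_2\oplus K$ to $l=a\,e_1+b\,f_1+c$ (where $c\in U_2\oplus K$) sends its $U_2\oplus K$-component to $c+b\,r$, while $t(f_1,r)$ adds $a\,r$; combined with the analogous transvections built from the isotropic vectors of $U_2$, these moves let one alter the $U_2$- and $K$-parts by controlled increments. Running this reduction, I would first clear the $U_2$-component, then reduce the $K$-component to a fixed lift $c_0\in K$ of $d\cdot\delta(l)$ (so $c_0/d\equiv\delta(l)$), arriving at a vector $d\,e_1+b\,f_1+c_0$. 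Because every transvection preserves $q$, the surviving coefficient is automatically $b=(q(l)-(c_0,c_0))/(2d)$, and $d\mid b$ is inherited from the original vector's divisibility; hence the normal form $d\,e_1+b\,f_1+c_0$ is an explicit function of $(q(l),\delta(l))$ alone. Two primitive vectors with equal invariants therefore reach the \emph{same} normal form, and are connected through $E(L)\subseteq\widetilde{SO^+(L)}$.

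The main obstacle is precisely this last reduction: organizing the transvections so as to simultaneously make the $U_1$-component primitive of the correct divisibility $d$, absorb the $U_2$-component into $K$, and reduce the $K$-component to the chosen lift of $\delta(l)$ modulo the increments $b\,r$ and $a\,r$, all while never leaving the group generated by Eichler transvections. The delicate point is the bookkeeping of divisibility across these steps, together with the use of the isotropic vectors of the \emph{second} hyperbolic plane to modify exactly those coefficients (notably the $f_1$-coefficient) that the transvections attached to the first plane leave fixed; this is the content that \cite[Lemma 3.5]{Gritsenko} and \cite[Lemma 2.6]{MonRapag} carry out in detail.
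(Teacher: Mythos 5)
The paper never proves this statement: Theorem~\ref{Gritsenko} is imported as a black box (Eichler's criterion) from the two cited sources and is only ever \emph{used}, in the proof of Lemma~\ref{GritsenkoK3A}. So the comparison can only be with the proofs in those sources, and your proposal is indeed a reconstruction of that standard argument: split off $U_1\oplus U_2$, verify that Eichler transvections $t(u,r)$ lie in $\widetilde{SO^+(L)}$, and reduce a primitive vector to a normal form depending only on $(q(l),\delta(l))$. The parts you actually carry out are correct: the invariance direction; the recovery of $d(l)$ as the order of $\delta(l)$ in $L^{\vee}/L$; the splitting $L=U_1\oplus U_2\oplus K$ from unimodularity; and the verification that $t(u,r)$ is a unipotent isometry acting trivially on $L^{\vee}/L$, hence lies in $\widetilde{SO^+(L)}$. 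Two remarks on hypotheses: all of this needs $L$ even (as you note parenthetically, and as holds for the lattices of the paper), and the phrase ``two orthogonal isotropic planes'' must indeed be read, as you do, as two orthogonal \emph{hyperbolic} planes $U_1\oplus U_2$; a literally totally isotropic pair (e.g. in $U(2)^{\oplus 4}$, which contains no copy of $U$ at all) would not support the argument, so your reinterpretation is not optional but necessary, and it is what the cited lemmas assume.

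As a self-contained proof, however, the proposal has a genuine gap, and it is the one you flag yourself in the last paragraph: the transitivity of the group generated by transvections on primitive vectors with fixed $(q(l),\delta(l))$ --- equivalently, the reduction of an arbitrary primitive vector to the normal form $d\,e_1+b\,f_1+c_0$ --- is described but not executed, and is then deferred back to exactly the references you set out to reconstruct. That reduction is not a routine verification; it \emph{is} the theorem. It is where both hyperbolic planes are used in an essential way (the moves you list, adding $b\,r$ or $a\,r$ to the $U_2\oplus K$-component, degenerate when coefficients vanish or share common factors, and the second plane is what unblocks this), and where the arithmetic bookkeeping lives. Even the small assertions you pass over need arguments: for instance, $d\mid b$ in the normal form is not merely ``inherited'' --- one needs that $\delta(l)=c_0/d+L$ gives $l\equiv c_0 \bmod dL$ and $(c_0,L)\subseteq d\mathbb{Z}$, whence $q(l)-q(c_0)\in 2d^2\mathbb{Z}$ using evenness, and only then $b=\bigl(q(l)-q(c_0)\bigr)/(2d)$ is divisible by $d$. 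Until the reduction itself is carried out, what you have is a correct and well-organized outline of the standard proof, with the same logical status as the paper's citation, rather than a proof.
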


\begin{Lemma}\label{GritsenkoK3A}
Let $L=H^2(X,\mathbb{Z})$ be a lattice of second cohomologies with Beauville--Bogomolov form, where $X$ is a $\mathrm{K3}$ or Kummer type manifold. Then the monodromy orbit of a primitive vector $l \in L$ is uniquely determined  by two invariants: $q(l)$ and~$\pm\delta(l).$
\end{Lemma}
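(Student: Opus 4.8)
The plan is to deduce the statement from Theorem~\ref{Gritsenko} by combining the lattice-theoretic classification of $\widetilde{SO^+(L)}$-orbits with the known description of the monodromy group $\mathrm{Mon}(X)$ and, in particular, of its action on the discriminant group $L^{\vee}/L$. First I would check that the hypothesis of Theorem~\ref{Gritsenko} is met: by Examples~\ref{ex:K3lattice} and~\ref{ex:Kummerlattice} the lattice $L=H^2(X,\mathbb{Z})$ equals $U^{\oplus 3}\oplus E(-8)^{\oplus 2}\oplus\langle -2(n-1)\rangle$ in the $\mathrm{K3}$ case and $U^{\oplus 3}\oplus\langle -2(n+1)\rangle$ in the Kummer case, so in both cases $L$ contains $U^{\oplus 2}$, i.e. two orthogonal copies of the hyperbolic plane $U$ (the two orthogonal isotropic planes required by Theorem~\ref{Gritsenko}). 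Hence Theorem~\ref{Gritsenko} applies and the $\widetilde{SO^+(L)}$-orbit of a primitive vector $l\in L$ is determined exactly by the pair $(q(l),\delta(l))$.

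The next ingredient is the structure of $\mathrm{Mon}(X)$ together with the homomorphism $\mathrm{Mon}(X)\to O(L^{\vee}/L)$ recording its action on the discriminant group. I would record three facts, valid for both $\mathrm{K3}$ and Kummer type (the $\mathrm{K3}$ case from Markman's computation in~\cite{Markman}, the Kummer case from~\cite{MonRapag}): (i) $\widetilde{SO^+(L)}\subseteq\mathrm{Mon}(X)$; (ii) every $g\in\mathrm{Mon}(X)$ acts on the cyclic group $L^{\vee}/L$ by multiplication by $\pm 1$; and (iii) there is a monodromy operator acting by $-1$ on $L^{\vee}/L$. For (iii) one can exhibit the explicit isometry that is the identity on the unimodular summand $U^{\oplus 3}\oplus E(-8)^{\oplus 2}$ (respectively $U^{\oplus 3}$) and $-\mathrm{id}$ on the rank-one summand $\langle -2(n-1)\rangle$ (respectively $\langle -2(n+1)\rangle$): it lies in $O^+(L)$ because it fixes the maximal positive-definite subspace pointwise, it has determinant $-1$, and it acts as $-1$ on $L^{\vee}/L$, so one only has to confirm it belongs to $\mathrm{Mon}(X)$ using the explicit description of the group.

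With these facts the conclusion is purely formal. Since $q$ is monodromy-invariant and, by (ii), the class $\delta(l)$ is preserved up to sign, two vectors in one monodromy orbit share the same $q$ and the same $\pm\delta$, so $(q(l),\pm\delta(l))$ is a monodromy invariant. Conversely, suppose $q(l)=q(l')$ and $\delta(l)=\pm\delta(l')$. If $\delta(l)=\delta(l')$, then $l$ and $l'$ lie in the same $\widetilde{SO^+(L)}$-orbit by Theorem~\ref{Gritsenko}, hence in the same monodromy orbit by (i). If $\delta(l)=-\delta(l')$, I apply the operator $g$ from (iii) to $l'$: then $q(gl')=q(l')=q(l)$ and $\delta(gl')=-\delta(l')=\delta(l)$, reducing to the previous case. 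Thus $(q(l),\pm\delta(l))$ determines the monodromy orbit.

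The main obstacle is not the lattice combinatorics, which are routine, but having the precise monodromy group at hand, that is, establishing (i)--(iii). The $\mathrm{K3}$ case is immediate from $\mathrm{Mon}(X)=\{g\in O^+(L):g|_{L^{\vee}/L}=\pm 1\}$. The Kummer case is more delicate: there the monodromy group is a proper subgroup of $\{g\in O^+(L):g|_{L^{\vee}/L}=\pm 1\}$ cut out by an additional determinant/orientation character, so the real point is to verify that all of $\widetilde{SO^+(L)}$ and the explicit reflection realizing $-1$ on the discriminant still satisfy this character and hence lie in $\mathrm{Mon}(X)$.
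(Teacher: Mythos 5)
Your proposal is correct and takes essentially the same route as the paper's proof: both apply Eichler's criterion (Theorem~\ref{Gritsenko}), invoke the known descriptions of $\mathrm{Mon}(X)$ (Markman for $\mathrm{K3}$ type, Mongardi for Kummer type), and realize the sign flip on the discriminant group by the reflection $\mathrm{R}_e$ --- your explicit isometry acting as $-\mathrm{id}$ on the rank-one summand and trivially on its orthogonal complement is exactly this reflection. Your added check that in the Kummer case both $\widetilde{SO^+(L)}$ and this reflection satisfy the determinant/discriminant compatibility is precisely the point the paper handles by noting $\gamma\mathrm{R}_e \in O^+(L)\setminus SO^+(L)$ with discriminant action $-1$.
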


\begin{proof}
First of all, let us consider $\mathrm{K3}$ type case. Recall that by Example~\ref{ex:K3lattice} the lattice of the second cohomology is
$$
L=U^{\oplus 3} \oplus E(-8)^{\oplus 2} \oplus \langle -2(n-1) \rangle.
$$
\noindent Consider 
$$
\pi: O^+(L) \to O(L^{\vee}/L),
$$
\noindent  Then, according to \cite[Lemma 9.2]{Markman}, the group $\mathrm{Mon}(X)$ is equal to $\pi^{-1}(\{1,-1\}).$ Let $m,l \in L$ lie in the same monodromy orbit. This means that there is~\mbox{$\gamma \in \mathrm{Mon}(X)$} such that~\mbox{$m=\gamma(l).$}  Therefore, by the definition of $\gamma$ we get that $q(l)=q(\gamma(l))$ and~\mbox{$\delta(l)=\pm\delta(\gamma(l)).$}

In the opposite direction, assume that there are primitive elements $l$ and $m$ which satisfy
$$
q(m)=q(l) \quad \text{and} \quad \delta(m)=\pm\delta(l).
$$
\noindent Then if $\delta(m)=\delta(l),$ by Theorem~\ref{Gritsenko} there is $\gamma \in \widetilde{SO^+(L)}$ such that $m=\gamma(l)$ and we are done. Otherwise, if $\delta(m)=-\delta(l),$ then $\delta(m)=\delta\left(\mathrm{R}_e(l)\right)$ and by  Theorem~\ref{Gritsenko} there is $\gamma \in \widetilde{SO^+(L)}$ such that $m=\gamma(\mathrm{R}_e(l))$ and we are done.

 Now let us consider the Kummer case. For Kummer variety the lattice is equal to $L = U^{\oplus 3} \oplus \langle -t \rangle.$ Let $\pi: O^+(L) \to O(L^{\vee}/L).$  According to~\mbox{\cite[Theorem 2.3]{Mongardi}} the monodromy group consists of two types of elements, namely,  
$$
\pi^{-1}(1) \cap SO^+(L) \quad \text{and} \quad \pi^{-1}(-1) \cap (O^+(L) \setminus SO^+(L)).
$$

\noindent  Let $m,l \in L$ lie in the same monodromy orbit. This means that there is~\mbox{$\gamma \in \mathrm{Mon}(X)$} such that~\mbox{$m=\gamma(l).$}  Again by the definition of $\gamma,$ it is obvious that 
$$
q(l)=q(\gamma(l)) \;\; \text{and} \;\; \delta(l)=\pm\delta(\gamma(l)).
$$

In the opposite direction, assume that there are primitive elements $l$ and $m$ which satisfy
$$
q(m)=q(l) \quad \text{and} \quad \delta(m)=\pm\delta(l).
$$
\noindent Then if $\delta(m)=\delta(l),$ by Theorem~\ref{Gritsenko} there is $\gamma \in \widetilde{SO^+(L)}$ such that $m=\gamma(l)$ and we are done. Otherwise, if $\delta(m)=-\delta(l),$ then $\delta(m)=\delta\left(\mathrm{R}_e(l)\right)$ and by  Theorem~\ref{Gritsenko} there is $\gamma \in \widetilde{SO^+(L)}$ such that $m=\gamma(\mathrm{R}_e(l))$ and we are done, because 
$$
\gamma\mathrm{R}_e \in O^+(L) \setminus SO^+(L) \quad \text{and} \quad \det\left(\gamma(\mathrm{R}_e(l))\right)=1.
$$

\end{proof}

\section{Description of nef cone for $\mathrm{K3}$ type manifolds}
In this section we prove Theorem \ref{TheoremK3}. Let $X$ is a $\mathrm{K3}$ type manifold of dimension $2n.$ Denote by~\mbox{$e \in H^2(X,\mathbb{Z})$} the class of half of the exceptional divisor on a deformation of~$X$ which is the Hilbert scheme of $n$ points on some $\mathrm{K3}$ surface. We identify~$H^2(X,\mathbb{Z})$ and $H^2(Y,\mathbb{Z})$ for any deformation $Y$ of $X$ in $\mathrm{Teich},$ since topologically they are the same. Half of the exceptional divisors on a deformation of $X$ which is a Hilbert scheme of points on a $\mathrm{K3}$ surface is then $\gamma(e)$ for some~\mbox{$\gamma \in \mathrm{Mon}(X).$}    By~\mbox{$\hat{e}\,=\,\frac{e}{2(n-1)} \in H_2(X,\mathbb{Z})$} we denote the dual class of $e.$  First of all, we need the following lemma (cf.~\cite[Lemma 2.1 and Remark 2.2]{Ciliberto}).

\begin{Lemma}\label{RHK3}
Let $S$ be a $\mathrm{K3}$ surface. Let $C \subset S$ be a smooth curve of genus $g.$  Assume that there is a linear system $g^1_k$ on $C$ with $1 \leqslant  k \leqslant  n$ for some $n \in \mathbb{N}.$   Then there is a map $f \colon C \to \mathrm{Hilb}^n(S)$ such that its image  is a rational curve $R$ which is numerically equivalent to $C-(g-1+k)\hat{e}$ in $\mathrm{N}_1(\mathrm{Hilb}^n(S),\mathbb{Z}).$

\end{Lemma}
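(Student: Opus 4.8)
The plan is to produce $R$ geometrically by letting the pencil cut out by the $g^1_k$ sweep across $C$, and then to read off its numerical class from the splitting $\mathrm{N}_1(\mathrm{Hilb}^n(S))\simeq \mathrm{N}_1(S)\oplus\mathbb{Z}\widehat{e}$. This is the classical way of producing rational curves in Hilbert schemes out of pencils on curves.

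First I would turn the linear system into a morphism. We may assume the $g^1_k$ is base-point free (a base point is simply absorbed into the fixed points chosen below), so it defines a degree-$k$ map $\pi\colon C\to\mathbb{P}^1$ whose fibres are the length-$k$ divisors of the pencil. Fixing $n-k$ general points $q_1,\dots,q_{n-k}\in S\setminus C$, I set
\[
h\colon\mathbb{P}^1\to\mathrm{Hilb}^n(S),\qquad t\mapsto \pi^{-1}(t)+q_1+\dots+q_{n-k},
\]
which is a morphism since $\pi^{-1}(t)$ is a flat family of length-$k$ subschemes. Then $f=h\circ\pi\colon C\to\mathrm{Hilb}^n(S)$ has image the rational curve $R=h(\mathbb{P}^1)$; as distinct fibres of $\pi$ give distinct subschemes, $h$ is birational onto $R$, so $[R]=h_*[\mathbb{P}^1]$ and every intersection number of $R$ is a degree of a pullback along $h$.

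Next I would pin down the horizontal part of $[R]=\beta-m\,\widehat{e}$, with $\beta\in\mathrm{N}_1(S)$, by pairing with the class $L\in\mathrm{NS}(S)\subset H^2(\mathrm{Hilb}^n(S),\mathbb{Z})$ of a general divisor on $S$ (geometrically the symmetric divisor $L^{(n)}$). Choosing $D\in|L|$ avoiding the $q_i$, the number $R\cdot L$ counts the $t$ for which some point of $\pi^{-1}(t)$ lies on $D\cap C$; since each of the $C\cdot L$ points of $D\cap C$ occurs for a single value of $t$, this gives $R\cdot L=C\cdot L$. As $q(\widehat{e},L)=0$, this forces $\beta=[C]$.

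It remains to compute $m$, and this is where the real work lies. Since $\Delta=2e$ and $q(\widehat{e},e)=-1$, one has $R\cdot\Delta=2m$, so I must show $R\cdot\Delta=2(g-1+k)$. The curve $R$ meets the exceptional divisor $\Delta$ precisely at the $t$ where $\pi^{-1}(t)+\sum q_i$ fails to be reduced; the fixed points never collide, so these are exactly the branch points of $\pi$, whose ramification divisor has degree $2g-2+2k$ by Riemann--Hurwitz (counted with multiplicity). The hard part is the local intersection count at such a point: in the standard chart of $\mathrm{Hilb}^2(\mathbb{C}^2)$ near $(y,x^2)$ given by ideals $(y-ax-b,\,x^2-cx-d)$, the exceptional divisor is the discriminant locus $\{c^2+4d=0\}$, while the arc through a simple branch point is $I_t=(y,\,x^2-t)$, i.e. $(a,b,c,d)=(0,0,0,t)$, along which $c^2+4d=4t$ vanishes to order one; more generally a ramification point of index $r$ contributes the discriminant order $r-1$. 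Summing over all branch points yields $R\cdot\Delta=2g-2+2k$, hence $m=g-1+k$ and $[R]=[C]-(g-1+k)\widehat{e}$, as claimed.
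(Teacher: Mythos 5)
Your proposal is correct and follows essentially the same route as the paper's proof: the pencil together with fixed points chosen off $C$ produces the rational curve, the horizontal part of its class is $[C]$, and the coefficient of $\hat{e}$ is computed by identifying the intersection with the exceptional divisor with the ramification of $C\to\mathbb{P}^1$ and applying Riemann--Hurwitz; your local discriminant computation in the $\mathrm{Hilb}^2(\mathbb{C}^2)$ chart merely makes explicit the transversality that the paper asserts without proof. The one caveat is your parenthetical about absorbing base points into the fixed points: doing so would place fixed points on $C$, contradicting your later assertions that the $q_i$ lie off $C$ and never collide with the moving fibre (such collisions do contribute to $R\cdot\Delta$), so one should instead assume the $g^1_k$ is base-point free, as the paper implicitly does and as can be arranged in all applications since there $k>2g-2$.
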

\begin{proof}
The lemma follows from the Riemann-Hurwitz formula.  The map $f$ is induced by a linear system $g^1_k$ on $C$ and is given by
$$
C \stackrel{g^1_k}{\longrightarrow} \mathbb{P}^1 \subset  \mathrm{Sym}^k(C) \hookrightarrow \mathrm{Sym}^k(S) \hookrightarrow \mathrm{Sym}^n(S),
$$

\noindent where the last inclusion $\mathrm{Sym}^k(S) \hookrightarrow \mathrm{Sym}^n(S)$ is given by 
$$
\mathrm{Sym}^k(S) \ni x_1+\ldots +x_k \mapsto x_1+\ldots+x_k+y_{k+1}+\ldots+y_n \in \mathrm{Sym}^n(S)
$$
\noindent for $y_{k+1}, \ldots, y_n \in S$ which are distinct fixed points out of $C.$  The ramification points are exactly the intersection of the rational curve $f(C)$ with the exceptional divisor. Let us write $R \equiv C-m\hat{e}$ with  
$$
C \in N_1(S) \hookrightarrow N_1(\mathrm{Hilb}^n(S))
$$
\noindent  and $m \geqslant 0.$ We have $q(e,\hat{e})\,=\,-1,$ thus, we get 
$$
m\,=\,q(R,e)\,=\,\frac{1}{2}q(R,\Delta),
$$
\noindent  where $\Delta\,=\,2e$ is a class of the diagonal in $\mathrm{Sym}^n(S).$   At the same time, we get 
$$
\frac{1}{2}q(R,\Delta)\,=\,\frac{1}{2}2(g-1+k)\,=\,(g-1+k),
$$
\noindent where the first equality is true by Riemann-Hurwitz formula.

\end{proof}

We have the following important property which will be very useful for proving the minimality of curve in the process of proving Theorem~\ref{TheoremK3}.

\begin{Lemma}\label{lemma:K3k>2g-2}
Let $M=\mathrm{Hilb}^n(S)$ for a $\mathrm{K3}$ surface $S$ such that~\mbox{$\mathrm{Pic}(S)=\langle x \rangle$} and let~\mbox{$C \subset S$} be a smooth curve of genus $g$ which corresponds to the class $x \in H^2(S, \mathbb{Z}).$ Let $t=2(n-1).$ Let $e \in  H^{1,1}(M,\mathbb{Z})$ be a class of half of the exceptional divisor on~$M.$ Assume that $\beta=rx-be \in H^{1,1}(M,\mathbb{Z})$ is a primitive class such that $r \in \mathbb{N},$
\begin{equation}\label{eq:K3k>2g-2b}
b \in [0,\frac{r}{2}] \quad \text{is an integer number,}
\end{equation}
\noindent and 
\begin{equation}\label{eq:K3k>2g-2bq>0}
q(\beta)<0.
\end{equation}
\noindent Then $k=\frac{bt}{r}-g+1>2g-2.$

\end{Lemma}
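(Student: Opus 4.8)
The plan is to reduce the statement to a single elementary inequality governed by the Beauville--Bogomolov form on $\mathrm{Pic}(M)=\mathrm{NS}(S)\oplus\mathbb{Z}e$, and then to exploit the constraint $b\in[0,r/2]$ together with the hypothesis $q(\beta)<0$. Everything else is bookkeeping.

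First I would record the intersection data. Since $C\subset S$ is a smooth curve of genus $g$ on a $\mathrm{K3}$ surface, adjunction (with $K_S=0$) gives $x^2=C^2=2g-2$. By the description of the form recalled in Section~\ref{SectionPreliminaries}, the class $e$ is orthogonal to $\mathrm{NS}(S)$ and $e^2=-2(n-1)=-t$. Expanding by bilinearity, this yields
\[
q(\beta)=q(rx-be)=r^2x^2+b^2e^2-2rb\,q(x,e)=r^2(2g-2)-b^2t.
\]
The number $k=\tfrac{bt}{r}-g+1$ is nothing but the $\widehat{e}$-coefficient $m=g-1+k$ of the rational curve $R\equiv C-m\widehat{e}$ of Lemma~\ref{RHK3} dual to $\beta$, rewritten via $m=\tfrac{bt}{r}$; so proving the lemma amounts to establishing the inequality $k>2g-2$.

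Next I would rephrase both sides. The hypothesis $q(\beta)<0$ reads $r^2(2g-2)<b^2t$, that is,
\[
g-1<\frac{b^2t}{2r^2}.
\]
On the other hand, substituting $k=\tfrac{bt}{r}-g+1$, the desired bound $k>2g-2$ is equivalent to $\tfrac{bt}{r}>3(g-1)$, so it suffices to prove this last inequality. Multiplying the previous estimate by $3$ gives $3(g-1)<\tfrac{3b^2t}{2r^2}$, and it remains only to bound the right-hand side by $\tfrac{bt}{r}$. This is exactly where the hypothesis $b\leq r/2$ is used: since then $\tfrac{3b}{2r}\leq\tfrac34\leq 1$ and $\tfrac{bt}{r}\geq 0$, one has
\[
\frac{3b^2t}{2r^2}=\frac{3b}{2r}\cdot\frac{bt}{r}\leq\frac{bt}{r}.
\]
Chaining the two displays gives $3(g-1)<\tfrac{bt}{r}$, which is precisely the claim.

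I do not expect any serious obstacle: the argument is mechanical, and the only delicate points are the bookkeeping of strictness — guaranteed by the strict inequality $q(\beta)<0$, which survives the chain — and the degenerate ranges $g\leq 1$ or $b=0$, where one checks directly that the same chain still yields a strict inequality (for instance, when $g=1$ the hypothesis forces $b\geq 1$, so $\tfrac{bt}{r}>0=3(g-1)$). It is worth emphasizing that $b\leq r/2$ enters in exactly one place, to force the factor $\tfrac{3b}{2r}$ to be at most $1$; the bound $k>2g-2$ is then what will let the subsequent argument invoke Riemann--Roch to guarantee the existence of the pencil $g^1_k$ producing the curve $R$.
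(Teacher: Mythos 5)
Your proof is correct and rests on exactly the same two facts as the paper's: the expansion $q(\beta)=r^2(2g-2)-b^2t<0$ and the bound $b/r\le\tfrac12$; the difference is purely in how the algebra is organized. The paper first disposes of $g\le 1$ separately and then, for $g\ge 2$, passes to square roots: from $q(\beta)<0$ it gets $\sqrt{(2g-2)/t}<b/r$, from $b\le r/2$ it gets $\sqrt{t/(2g-2)}>2$, and it concludes via $k>\sqrt{(2g-2)t}-g+1=(2g-2)\bigl(\sqrt{t/(2g-2)}-\tfrac12\bigr)>2g-2$ (the paper's display misprints $\sqrt{(2g-2)t}$ as $\sqrt{(2g-2)/t}$, but this is only a typo). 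Your version instead reformulates the goal as $\tfrac{bt}{r}>3(g-1)$ and proves it by the multiplicative chain $3(g-1)<\tfrac{3b^2t}{2r^2}=\tfrac{3b}{2r}\cdot\tfrac{bt}{r}\le\tfrac{bt}{r}$, which needs no square roots and, as you note, is uniform in $g$ and $b$: the degenerate cases $g\le 1$ and $b=0$ are absorbed by the same chain, since the first inequality is strict (from $q(\beta)<0$) and the second only needs $\tfrac{3b}{2r}\le 1$ and $\tfrac{bt}{r}\ge 0$. So what your route buys is the elimination of the case analysis and slightly cleaner strictness bookkeeping; the mathematical content is the same as the paper's.
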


\begin{proof}
If $g=0$ or $1,$ we are done since $\frac{br}{t}>0.$ So assume that $g \geqslant 2.$ By \eqref{eq:K3k>2g-2bq>0} we obtain
\begin{equation}\label{eq:K3frac<b/r}
\sqrt{\frac{2g-2}{t}} < \frac{b}{r}.
\end{equation}
\noindent So by \eqref{eq:K3k>2g-2b} we get
\begin{equation}\label{eq:K3sqrt>2}
\sqrt{\frac{t}{2g-2}} > 2. 
\end{equation}
\noindent By \eqref{eq:K3frac<b/r} and \eqref{eq:K3sqrt>2} we have the following inequalities
$$
k=\frac{bt}{r}-g+1>\sqrt{\frac{2g-2}{t}}-g+1=(2g-2)\left(\sqrt{\frac{t}{2g-2}} -\frac{1}{2}  \right)>2g-2.
$$

\end{proof}

\begin{Lemma}\label{lemma:K3minimalcurves}
Let $M=\mathrm{Hilb}^n(S)$ for a $\mathrm{K3}$ surface $S$ such that~\mbox{$\mathrm{Pic}(S)=\langle x \rangle$} and let~\mbox{$C \subset S$} be a smooth curve of genus $g$ which corresponds to the class $x \in H^2(S, \mathbb{Z}).$ Let $t=2(n-1).$ Let $e \in  H^{1,1}(M,\mathbb{Z})$ be a class of half of the exceptional divisor on~$M.$ Assume that $\beta=rx-be \in H^{1,1}(M,\mathbb{Z})$ is a primitive class such that $r$ divides $t,$ $b \in [0,\frac{r}{2}]$ is an integer number, and $q(\beta)<0.$ Then there is a rational curve $R \subset M$ corresponds to the class $\beta.$ Moreover, the curve $R$ varies in the variety~\mbox{$\mathcal{Z} \subset M$} which is a rational quotient such that dimension of  its fibres  is  equal to codimension of $\mathcal{Z}$ in $M.$ 
\end{Lemma}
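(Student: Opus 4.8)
The plan is to split the statement into an explicit construction of the rational curve $R$ and a dimension analysis of the family it sweeps out; the rational-quotient structure is then read off exactly as in the criterion recalled in Remark~\ref{remark:criteriaminimality}.

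\textbf{Step 1 (construction of $R$).} First I would use $\mathrm{Pic}(S)=\langle x\rangle$ with $x^2=2g-2$ to fix a smooth irreducible curve $C\in|x|$ of genus $g$, and set $k=\tfrac{bt}{r}-g+1$. Since $r\mid t$ and $b\in\mathbb{Z}$, the number $k$ is an integer; Lemma~\ref{lemma:K3k>2g-2} gives $k>2g-2$, hence $k\geqslant1$, while $b\leqslant r/2$ forces $k\leqslant\tfrac{t}{2}-g+1=n-g\leqslant n$. So $1\leqslant k\leqslant n$, and $k>2g-2$ means that every degree-$k$ divisor on $C$ is nonspecial, whence $C$ carries a base-point-free $g^1_k$. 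Feeding this $g^1_k$ into Lemma~\ref{RHK3} produces a rational curve $R\subset M$ with $R\equiv C-(g-1+k)\hat e=C-\tfrac{bt}{r}\hat e$. Under $H_2(M,\mathbb{Z})\hookrightarrow H^2(M,\mathbb{Q})$ the class $C$ maps to $x$ and $\hat e$ to $e/t$, so $[R]=x-\tfrac{b}{r}e=\tfrac1r\beta$; thus $R$ represents $\beta$ up to the positive scalar $r$, as claimed.

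\textbf{Step 2 (the sweeping locus $\mathcal{Z}$).} Next I would let the data defining $R$ vary over all smooth $C'\in|x|=\mathbb{P}^g$, all degree-$k$ divisors, and all residual subschemes, defining $\mathcal{Z}$ as the closure of
$$
\{\,D+\xi_2\;:\;C'\in|x|,\ D\in\mathrm{Sym}^k(C'),\ \xi_2\in\mathrm{Hilb}^{n-k}(S)\,\}\subset M.
$$
A parameter count would give $\dim\mathcal{Z}=g+k+2(n-k)=2n-(k-g)$, so $\mathcal{Z}$ has codimension $k-g$ in $M$, and every curve $R$ from Step~1 lies inside it.

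\textbf{Step 3 (rational quotient, and the main obstacle).} Because $k>2g-2$, for smooth $C'$ the Abel--Jacobi map $\mathrm{Sym}^k(C')\to\mathrm{Pic}^k(C')$ makes $\mathrm{Sym}^k(C')$ a $\mathbb{P}^{k-g}$-bundle, the fibre over $L$ being the complete linear system $|L|\cong\mathbb{P}^{k-g}$, and the curves $R$ are exactly the lines in these $\mathbb{P}^{k-g}$. Relatively over $|x|$ this yields a fibration $\mathcal{Z}\dashrightarrow Q$, $D+\xi_2\mapsto(C',L,\xi_2)$, whose general fibre $|L|+\xi_2\cong\mathbb{P}^{k-g}$ is rationally connected of dimension $k-g$, equal to $\operatorname{codim}\mathcal{Z}$. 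To see that this is genuinely the rational quotient I would argue that $Q$ is not uniruled: it dominates the product of the Beauville--Mukai relative Jacobian $\{(C',L)\}$ over $|x|$, which carries a holomorphic symplectic form, with the non-uniruled variety $\mathrm{Hilb}^{n-k}(S)$; since an RC fibration with non-uniruled base is the maximal rationally connected fibration, $\mathcal{Z}\dashrightarrow Q$ is the rational quotient with fibre dimension $k-g=\operatorname{codim}\mathcal{Z}$. The constructive part is routine given Lemmas~\ref{RHK3} and~\ref{lemma:K3k>2g-2}; the hard part will be this last step, namely ruling out rational curves in $\mathcal{Z}$ joining distinct triples $(C',L,\xi_2)$ — equivalently, establishing that $Q$ is non-uniruled even though it fibres over $\mathbb{P}^g=|x|$ — so that the rational-quotient fibres are precisely the $\mathbb{P}^{k-g}$ and their dimension matches the codimension, as required by Remark~\ref{remark:criteriaminimality}.
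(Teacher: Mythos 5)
Your proposal follows essentially the same route as the paper's proof: construct $R$ from a degree-$k$ pencil via Lemma~\ref{RHK3} (with Lemma~\ref{lemma:K3k>2g-2} supplying $k>2g-2$, hence the existence of the $g^1_k$), identify the swept locus $\mathcal{Z}$ as the relative $\mathrm{Sym}^k$ over the Beauville--Mukai system times $\mathrm{Sym}^{n-k}(S)$, and exhibit the rational quotient with $\mathbb{P}^{k-g}$ fibres matching $\operatorname{codim}\mathcal{Z}$. The step you single out as the remaining hard part is settled in the paper exactly as you sketch it: the (compactified) relative Jacobian $\mathcal{J}$ is deformation equivalent to $\mathrm{Hilb}^g(S)$ by Sawon's result, so the base $\mathcal{J}\times\mathrm{Sym}^{n-k}(S)$ of your fibration is non-uniruled and the fibration is indeed the rational quotient.
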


\begin{proof}
Let us consider $\widehat{\beta}=x-\frac{b}{r}e \in H_2(X,\mathbb{Q}).$ Let $m=\frac{t}{r}$ We have 
$$
\widehat{\beta} = x-\frac{mb}{t}e \equiv C-mb\hat{e}.
$$

\noindent Let us prove that there is a linear system $g^1_k$ on $C$ such that 
\begin{equation}\label{eq:equationofkK3'}
k=mb-g+1=\frac{bt}{r}-g+1
\end{equation}
\noindent and $k \leqslant n.$  The inequality is clear, because this means that 
$$
\frac{bt}{r}-g+1 \leqslant  \frac{t}{2}+1=n
$$
\noindent or, equivalently, $\frac{b}{r} \leqslant  \frac{1}{2}+\frac{g}{t}$ which is true according to the conditions on~$b.$

 By Lemma~\ref{lemma:K3k>2g-2} we get 
\begin{equation}\label{eq:k>2g-2minimalityK3}
k>2g-2.
\end{equation}
\noindent If we prove that  the gonality of $C$ is less than or equal to $k,$ then by Lemma~\ref{RHK3} we get that the linear system $g^1_k$ gives a rational curve $R$ on $M.$ To do this we need to use the inequality for the gonality in~\cite[Chapter~2, page 261]{grif}. In other words, we  have to prove that
$$
\floor*{\frac{g+1}{2}}+1 \leqslant  k.
$$
\noindent For $g \geqslant 2$ this immediately follows from Lemma~\ref{lemma:K3k>2g-2}. For $g=0$ it is enough to prove that $k \geqslant 1.$ Form \eqref{eq:equationofkK3'} it is enough to prove that $\frac{bt}{r}+1 \geqslant 1,$ which is true. For~\mbox{$g=1$} it is enough to prove that $k \geqslant 2.$ Form \eqref{eq:equationofkK3'} it is enough to prove that~\mbox{$\frac{bt}{r} \geqslant 1,$} which is true. This means that  there is a rational curve $R$ which corresponds to the class~$\beta.$ 

Now let us find the MBM locus $\mathcal{Z}$ of the curve $R.$ Assume that the genus of the curve~$C$ is $g>0.$ By \eqref{eq:k>2g-2minimalityK3} we get that~$\mathrm{Sym}^k(C)$ is a projective bundle over the Jacobian~$J(C)$ of $C.$ Also by Hirzebruch-Riemann-Roch theorem we get that 
$$
\dim H^0(S,\mathcal{O}_S(C))=g+1
$$
\noindent and so $\mathbb{P}^g$ parametrizes curves of genus $g$ on the surface $S.$ Thus, we have the fibration over $\mathbb{P}^g$ with the Jacobians as the fibres. In other words, we get the fibration 
\begin{equation}\label{eq:beauville-mukaisystemK3}
j \colon \mathcal{J} \to \mathbb{P}^g,
\end{equation}
\noindent such that the fibre of $j$ over a point $D \in \mathbb{P}^g,$ where $D \subset S$ is a curve of genus $g,$ is equal to the Jacobian of $D.$ Let 
$$
\phi \colon \mathcal{P} \to \mathcal{J}
$$
\noindent  be a fibration such that the fibre over a point $J \in \mathcal{J}$ is $\mathbb{P}^{k-g}$ which gives us the fibration
$$
\mathrm{Sym}^k(D) \xrightarrow{\mathbb{P}^{k-g}} J(D)
$$
\noindent for a curve $D \in \mathbb{P}H^0(S,\mathcal{O}_S(C))^{\vee}$ of genus $g.$  The completion of $\mathcal{Z}$ is $\mathcal{P} \times \mathrm{Sym}^{n-k}(S).$ Note that the fibration \eqref{eq:beauville-mukaisystemK3} is called the Beauville-Mukai system and it is known that $\mathcal{J}$ is deformation equivalent to $\mathrm{Hilb}^g(S)$ (see~\cite[Section 2.3]{Sawon}). In other words, the desingularization of $\mathcal{Z}$ is deformation equivalent to $\mathbb{P}^{k-g}$-bundle over IHSM. The dimension of $\mathcal{Z}$ is $2n-2k+g+g+k-g=2n-k+g.$ The dimension of the fibre of its rational quotient is equal to $k-g$ which is the codimension of $\mathcal{Z}$ in $M.$

If the genus of $C$ is $g=0,$ then the MBM locus $\mathcal{Z}$ of $R$ is 
$$
\mathrm{Sym}^k(C) \times \mathrm{Sym}^{n-k}(S) \simeq \mathbb{P}^k \times \mathrm{Sym}^{n-k}(S).
$$
\noindent Its dimension is equal to $2n-k$ and the dimension of the fibre of its rational quotient is equal to the codimension of $\mathcal{Z}$ in $M.$ This completes the proof.

\end{proof}

Now we are ready to prove Theorem \ref{TheoremK3}. 

\begin{proof}[Proof of Theorem \ref{TheoremK3}.]

Let $\alpha$ be a primitive element in $H^2(X,\mathbb{Z})$ such that~\mbox{$q(\alpha)<0.$} First of all, assume that the class $\alpha \in H^2(X,\mathbb{Z})$  has the same square and the image in the discriminant group as $e$, then according to Lemma~\ref{GritsenkoK3A}  they are in the same monodromy orbit. The class $e$  is clearly an MBM class, because it corresponds to the general fibre $R$ of the exceptional divisor over the diagonal in~$\mathrm{Sym}^n(S)$ which is MBM locus of $R.$ This MBM locus is $\mathbb{P}^1$-bundle.  Therefore, the class $\alpha$ is also an MBM class. Thus, by Theorem \ref{Torelli} there is a deformation of~$X$ preserving~$e,$ which is  isomorphic to $\mathrm{Hilb}^n(S)$ with 
$$
\mathrm{Pic}(\mathrm{Hilb}^n(S))\,=\,\langle  e \rangle,
$$
\noindent i.e. $S$ does not have curves. We have
$$
q(\widehat{e})=-\frac{1}{2(n-1)}=q(\widehat{\alpha}).
$$
\noindent  Now assume that $\alpha$ and $e$ are not in the same monodromy orbit. We divide the proof into several steps.

\textbf{Step 1.} \emph{In this step we recall that there is a deformation $Y$ of $X$ within~$\textup{Teich}_{\alpha}$ with Picard lattice $\mathrm{Pic}(Y)=\langle \alpha, e \rangle$  such that $Y$ is isomorphic to the Hilbert scheme of $\mathrm{K3}$ surface.}  This immediately follows from  Corollary~\ref{Corollary:deformation}. So $Y=\mathrm{Hilb}^n(S')$ for a $\mathrm{K3}$ surface $S'$ with 
$$
\mathrm{Pic}(S')=\langle y \rangle=e^{\perp} \; \text{in} \; \langle \alpha,e \rangle.
$$

\textbf{Step 2.}  \emph{In this step we deform $Y$ to $Z$ in $\textup{Teich}_{\alpha}$ such that $\mathrm{Pic}(Z)=\langle \alpha, \gamma^{-1}(e) \rangle$ and $\gamma \in \mathrm{Mon}(X)$ is chosen to normalize $\gamma(\alpha)$  in a certain way.}  Let us prove the following lemma.

\begin{Lemma}\label{lemmaAVK3}
Let $Y$ be a Hilbert scheme of $n$ points $\mathrm{Hilb}^n(S')$ on a $\mathrm{K3}$ surface~$S'.$ Assume that $\mathrm{Pic}(Y)=\langle \alpha, e\rangle,$  where~\mbox{$\alpha=ay+ce \in H^{1,1}(Y,\mathbb{Z})$} is a primitive class with $q(\alpha)<0$   for~\mbox{$y \in \mathrm{Pic}(S').$}  Then there is an element~$\gamma \in \textup{Mon}(Y),$ such that~\mbox{$\gamma(\alpha)=rx-be$}  with primitive $x \in H^2(S',\mathbb{Z}),$ and with integer numbers~\mbox{$b \in [0,\frac{r}{2}]$} and~\mbox{$r=\gcd(a,t),$} where $t=2(n-1).$

\end{Lemma}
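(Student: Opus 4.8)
The plan is to reduce the statement to a comparison of the two monodromy invariants supplied by Lemma \ref{GritsenkoK3A} — the divisibility $d(\alpha)$ and the discriminant class $\delta(\alpha)$ — and then to produce a vector of the prescribed shape $rx-be$ sharing these invariants with $\alpha$. Write $\Lambda=H^2(S',\mathbb{Z})$, the even unimodular K3 lattice, so that $L=H^2(Y,\mathbb{Z})=\Lambda\oplus\mathbb{Z}e$ with $q(e)=-t$, $t=2(n-1)$, and $L^{\vee}/L\cong\mathbb{Z}/t\mathbb{Z}$ generated by the class of $e/t$ (see Example \ref{ex:K3lattice}). Since $\mathrm{Pic}(S')=\langle y\rangle$ is saturated in $\Lambda$, the generator $y$ is primitive in the unimodular lattice $\Lambda$, so there is $w\in\Lambda$ with $q(y,w)=1$. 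First I would compute the invariants of $\alpha=ay+ce$. Primitivity of $\alpha$ forces $\gcd(a,c)=1$; from $q(\alpha,w)=a$ and $q(\alpha,e)=-ct$ one reads off $d(\alpha)=\gcd(a,ct)$, and this equals $\gcd(a,t)=r$ precisely because $\gcd(a,c)=1$. Reducing $\alpha/r$ modulo $L$ then gives $\delta(\alpha)=c\,(t/r)\in\mathbb{Z}/t\mathbb{Z}$, an element of order $r$ since $\gcd(c,r)=1$.

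Next I would fix the target data. I choose the integer $b\in[0,r/2]$ with $b\equiv\pm c\pmod r$: reducing $c$ modulo $r$ into $\{0,\dots,r-1\}$ and, if the residue exceeds $r/2$, replacing it by $r$ minus that residue, one lands in $[0,r/2]$, and moreover $\gcd(b,r)=\gcd(c,r)=1$. For the self-intersection I set
\[
x^2=\frac{q(\alpha)+b^2t}{r^2},
\]
which is exactly the value making $q(rx-be)=q(\alpha)$. I would then realize $x$ as a primitive vector of this square in $\Lambda$, which is possible as soon as $x^2$ is an even integer, because a hyperbolic summand $U\subset\Lambda$ contains a primitive vector of every even square. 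Granting this, $\beta:=rx-be$ is primitive (from $\gcd(r,b)=1$ and primitivity of $x$), satisfies $q(\beta)=q(\alpha)$, has divisibility $d(\beta)=\gcd(r,bt)=r$ (using $r\mid t$ and $\gcd(r,b)=1$), and has $\delta(\beta)=-b\,(t/r)\equiv\pm\,\delta(\alpha)$ by the choice $b\equiv\pm c\pmod r$. Hence $\beta$ and $\alpha$ share the invariants $\bigl(q,\pm\delta\bigr)$, so by Lemma \ref{GritsenkoK3A} they lie in a single monodromy orbit, yielding $\gamma\in\mathrm{Mon}(Y)$ with $\gamma(\alpha)=\beta=rx-be$.

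The main obstacle is exactly the claim that $x^2$ is an \emph{even} integer. Integrality is straightforward: $r\mid a$ gives $r^2\mid a^2 q(y)$, while $b\equiv\pm c\pmod r$ gives $r\mid(b^2-c^2)$ and, together with $r\mid t$, yields $r^2\mid(b^2-c^2)t$; thus $r^2\mid q(\alpha)+b^2t$. Parity I would settle $2$-adically. Writing $a=ra'$, the term $a'^2 q(y)$ is even since $q(y)$ is even, so it remains to show $\tfrac{(b^2-c^2)t}{r^2}$ is even. If $r$ is odd this is immediate, as then $t/r$ is even. If $r$ is even, then $b$ and $c$ are odd (both coprime to $r$), and the congruence $b\equiv\pm c\pmod r$ forces $r\mid(b-c)$ or $r\mid(b+c)$; since the complementary factor is even, the $2$-adic valuation satisfies $v_2(b^2-c^2)\geqslant v_2(r)+1$, whence $v_2\!\bigl(\tfrac{(b^2-c^2)t}{r^2}\bigr)\geqslant v_2(t)-v_2(r)+1\geqslant 1$ because $r\mid t$. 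This is the delicate point where the bound $b\in[0,r/2]$ and the normalization $b\equiv\pm c\pmod r$ genuinely enter; with it in hand the evenness follows, and the remaining checks of $q(\beta)$, $d(\beta)$ and $\delta(\beta)$ are routine lattice computations.
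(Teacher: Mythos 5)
Your proof is correct, and it rests on the same two pillars as the paper's argument: Lemma~\ref{GritsenkoK3A} (the monodromy orbit of a primitive vector is determined by $q$ and $\pm\delta$) and the fact that the unimodular K3 lattice contains a primitive vector of every even square. The difference is in how the target vector is built. You do it in one step: compute $d(\alpha)=\gcd(a,t)=r$ and $\delta(\alpha)=c\,(t/r)$, normalize $b\equiv\pm c\pmod r$ into $[0,r/2]$, and solve $x^2=\bigl(q(\alpha)+b^2t\bigr)/r^2$ — which then obliges you to prove that this value is an \emph{even} integer, the $2$-adic valuation argument being the delicate point of your write-up (and it is carried out correctly, including the edge cases $r=1$ and $r$ even). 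The paper instead performs two successive monodromy moves: first $\alpha=ay+ce\mapsto\widetilde{\alpha}=ry'+ce$ with $q(y')=m^2q(y)$, $m=a/r$, and then $\widetilde{\alpha}\mapsto rx+(c-lr)e$ with $q(x)=q(y')+tl^2-2cls$, $s=t/r$; at each step the required square is a sum of manifestly even terms, so no parity analysis is needed. The two constructions yield the same vector — substituting $b=|c-lr|$ into your formula gives exactly $x^2=q(y')+tl^2-2cls$ — so the trade-off is purely expository: your version is a single, self-contained application of Lemma~\ref{GritsenkoK3A} with explicit invariant computations, at the cost of the valuation lemma; the paper's two-step version keeps every square visibly even but invokes Lemma~\ref{GritsenkoK3A} twice.
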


\begin{proof}
Let us consider the element $\alpha=ay+ce$ and let
$$
r\,=\,\gcd(a,t), \quad a\,=\,rm \quad \text{and} \quad t\,=\,rs.
$$

\noindent Note that as the lattice of the second cohomologies of a $\mathrm{K3}$ surface is unimodular,  every even number can be realized as a square of some primitive element in the lattice, so we take some primitive 
$$
y' \in H^2(S', \mathbb{Z}) \subset H^2(Y, \mathbb{Z})
$$
\noindent with $q(y')\,=\,m^2q(y).$    Let us first prove that the class $\alpha\,=\,ay+ce$ and $\widetilde{\alpha}\,=\,ry'+ce$ have the same squares and images of $\delta$ (see Definition \ref{definition:divisib}). The equality 
$$
q(\alpha)\,=\,q(\widetilde{\alpha})
$$
\noindent is obvious, because
\begin{gather*}
q(\alpha)\,=\,a^2q(y)-c^2t \;\; \text{and} \\
 q(\widetilde{\alpha})\,=\,r^2q(y')-c^2t\,=\,(rm)^2q(y)-c^2t\,=\,a^2q(y)-c^2t.
\end{gather*}

\noindent The discriminant group is equal to $\mathbb{Z}/t\mathbb{Z}.$ This means that 
$$
\delta(\alpha)\,=\,\frac{\alpha}{r} \mod L,
$$
\noindent because $d(\alpha)\,=\,\gcd(a,t)\,=\,r$ and thus, 
$$
\delta(\alpha)\,=\,cs \mod \, t.
$$ 
\noindent For $\widetilde{\alpha}$ we get 
$$
\delta(\widetilde{\alpha})\,=\,\frac{\widetilde{\alpha}}{r} \mod L,
$$
\noindent  because $d(\widetilde{\alpha})\,=\,\gcd(r,t)\,=\,r$ and thus, 
$$
\delta(\widetilde{\alpha})\,=\,cs \mod \, t.
$$
\noindent Therefore, according to Lemma \ref{GritsenkoK3A} we obtain $\widetilde{\gamma} \in \text{Mon}(Y)$ such that $\widetilde{\gamma}(\alpha)\,=\,\widetilde{\alpha}.$

Now we prove that there is  $\gamma \in \text{Mon}(Y)$  such that $\gamma(\alpha)=rx-be$  with $ 0 \leqslant b \leqslant \frac{t}{2}$ and a primitive $x \in H^2(S',\mathbb{Z}).$ First of all, we find $x \in H^2(S',\mathbb{Z})$ such that
$$
\widetilde{\alpha}\,=\,ry'+ce \quad \text{and} \quad \doublewidetilde{\alpha}\,=\,rx+(c-lr)e
$$ 
\noindent for all $l \in \mathbb{Z}$ are in the same monodromy orbit. The classes $\widetilde{\alpha}$ and $\doublewidetilde{\alpha}$ have the same image in the discriminant group, because 
$$
\delta(\widetilde{\alpha})\,=\,cs \mod \, t \;\; \text{and} \;\; \delta(\doublewidetilde{\alpha})\,=\,(c-lr)s \mod \, t.
$$
\noindent So it is enough to choose $x \in H^2(S',\mathbb{Z})$ such that  the classes $\widetilde{\alpha}$ and $\doublewidetilde{\alpha}$ have the same squares. We get
$$
q(\widetilde{\alpha})\,=\,r^2q(y')-c^2t \quad \text{and} \quad q(\doublewidetilde{\alpha})\,=\,r^2q(x)-c^2t-l^2r^2t+2clrt.
$$

\noindent Thus, $q(\widetilde{\alpha})\,=\,q(\doublewidetilde{\alpha})$ if and only if
\begin{equation}\label{shift}
q(x)\,=\,q(y')+tl^2-2cls.
\end{equation}

We can choose $b'=c-lr$ in   the segment $[-\frac{r}{2},\frac{r}{2}].$ Also we can choose primitive~\mbox{$x \in H^2(S',\mathbb{Z})$} satisfying \eqref{shift}. Let $b=|b'|.$  By Lemma \ref{GritsenkoK3A} there is~\mbox{$\gamma \in \text{Mon}(Y)$} such that $\gamma(\alpha)=rx-be.$ So we are done.

\end{proof}

So consider the point in $\mathrm{Teich}_{\alpha}$ which corresponds to the variety $Z$ such that 
$$
\mathrm{Pic}(Z)=\langle \alpha, \gamma^{-1}(e) \rangle,
$$
\noindent  where $\gamma \in \mathrm{Mon}(Y)$ is as in Lemma~\ref{lemmaAVK3}.  The variety $Z$ is bimeromorphic to the Hilbert scheme of points on some $\mathrm{K3}$ surface $S$ with $\mathrm{Pic}(S)=\langle x \rangle,$ where $x \in \mathrm{Pic}(S')$ as in Lemma~\ref{lemmaAVK3}. Note that in any monodromy orbit there is only one element which satisfies the conditions of Lemma \ref{lemmaAVK3}.

\textbf{Step 3.} \emph{In this step we give necessary condition for $\alpha$ to be MBM.}

\begin{Lemma}\label{lemma:K3thenq>-2}
Let $Y$ be an IHSM of $\mathrm{K3}$ type of dimension $2n.$ Let $\alpha \in H^{1,1}(Y,\mathbb{Z})$ be a primitive class which is MBM. Assume that $Y$ is deformation equivalent to the variety $Z$ such that $\alpha \in H^{1,1}(Z,\mathbb{Z})$ and there is $\gamma \in \mathrm{Mon}(Y)$ such that $\gamma(Z)$ is bimeromorphic to~\mbox{$\mathrm{Hilb}^n(S)$} for some $\mathrm{K3}$ surface $S.$ Assume that $\gamma(\alpha)=rx-be,$ where $\mathrm{Pic}(S)=\langle x \rangle,$  $b \in [0,\frac{r}{2}]$ is an integer number and $r$ divides $t=2(n-1).$ Then~$q(x) \geqslant -2.$

\end{Lemma}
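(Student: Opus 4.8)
The plan is to argue by contradiction, exploiting the fact that an MBM class must be represented by an effective curve on every deformation. Since $H^2(S,\mathbb{Z})$ is an even lattice and $\mathrm{Pic}(S)=\langle x\rangle$, the self-intersection $q(x)=x^2$ is an even integer, so it is enough to rule out $q(x)\leqslant -4$. Assume therefore that $q(x)\leqslant -4$. The first step is to observe that then $S$ carries no curves at all: any irreducible curve $C\subset S$ would have class $[C]=kx$ for some $k\in\mathbb{Z}\setminus\{0\}$, whence $C^2=k^2q(x)\leqslant q(x)\leqslant -4$, contradicting the bound $C^2=2p_a(C)-2\geqslant -2$ valid for an irreducible curve on a $\mathrm{K3}$ surface. (In particular $S$ has no class of positive square and is non-projective, which is why we work in the Kähler rather than the projective setting.)

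Next I would describe the effective $1$-cycles on $\mathrm{Hilb}^n(S)$. Since $S$ has no curves, neither does $S^n$ nor its quotient $\mathrm{Sym}^n(S)$; consequently every curve $R\subset\mathrm{Hilb}^n(S)$ is contracted by the Hilbert--Chow morphism $\mathrm{Hilb}^n(S)\to\mathrm{Sym}^n(S)$, lies in one of its fibres, and therefore has numerical class a positive multiple of the fibre class $\widehat{e}$. Thus every effective $1$-cycle on $\mathrm{Hilb}^n(S)$ is numerically proportional to $\widehat{e}$.

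Finally I would invoke the MBM hypothesis to reach a contradiction. Because $\alpha$ is MBM and this property is invariant under monodromy and deformation (Proposition~\ref{prop:simultaneously}), the class $\gamma(\alpha)=rx-be$ is MBM on $\mathrm{Hilb}^n(S)$, so by Remark~\ref{remark:vanishingofcurve} its dual $\widehat{\gamma(\alpha)}$ is, up to sign, represented by a possibly reducible effective curve there. However, in the decomposition $\mathrm{N}_1(\mathrm{Hilb}^n(S))=\mathrm{N}_1(S)\oplus\mathbb{Z}\widehat{e}$ the class $\widehat{\gamma(\alpha)}$ has a nonzero component along $\mathrm{N}_1(S)$, since $r\geqslant 1$; hence it is not proportional to $\widehat{e}$. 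This contradicts the previous paragraph, and the contradiction forces $q(x)\geqslant -2$.

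The step I expect to require the most care is the second one: verifying rigorously that the absence of curves on $S$ forces every curve of $\mathrm{Hilb}^n(S)$ into a Hilbert--Chow fibre (so that the effective cone of curves degenerates to the single ray $\mathbb{R}_{\geqslant 0}\widehat{e}$), and ensuring that the representability of the MBM class by an effective curve is legitimately transported to the specific, non-projective model $\mathrm{Hilb}^n(S)$ rather than only to a generic deformation.
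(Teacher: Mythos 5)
Your proposal is correct and takes essentially the same approach as the paper: the paper's proof likewise observes that $q(x)<-2$ forces $S$ to have no curves, so the only rational curves on $\mathrm{Hilb}^n(S)$ are of class $\widehat{e}$, which by Remark~\ref{remark:vanishingofcurve} contradicts the MBM property of $\gamma(\alpha)=rx-be$ since its dual class is not proportional to $\widehat{e}$. Your write-up merely fills in details the paper leaves implicit (adjunction on the $\mathrm{K3}$ surface, the Hilbert--Chow contraction argument, and the nonzero component of $\widehat{\gamma(\alpha)}$ along $H_2(S)$).
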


\begin{proof}
 If~\mbox{$q(x)<-2,$} then the  $\mathrm{K3}$ surface $S$ does not have any curves. Therefore, by Remark~\ref{remark:vanishingofcurve} the class $\alpha$ is not an MBM class, because the only rational curves on the Hilbert scheme are of class $\hat{e}.$ If $q(x) \geqslant -2,$ we use the uniform representation of monodromy orbit from Step~2, namely, for every $\alpha \in H^2(X,\mathbb{Z})$ there is $\gamma \in \text{Mon}(X)$ such that 
$$
\gamma(\alpha)=rx-be,
$$
\noindent where $r$ divides~$t,$ the integer number $b \in [0,\frac{r}{2}]$ and $\gcd(r,b)=1.$  Thus, we can deform~$X$ to $Z$ such that  $\gamma(Z)$ is bimeromorphic to $\mathrm{Hilb}^n(S)$ with  
$$
\mathrm{Pic}(S)=\langle x \rangle.
$$

\end{proof}

\textbf{Step 4.} \emph{In this step we prove that the necessary condition on $\alpha$ to be MBM given in Step 3 is also sufficient.}

\begin{Lemma}\label{lemma:K3q>-2}
Let $Y$ be an IHSM of $\mathrm{K3}$ type of dimension $2n.$ Let $\alpha \in H^{1,1}(Y,\mathbb{Z})$ be a primitive class such that $q(\alpha)<0.$ Assume that~$Y$ is deformation equivalent to the variety $Z$ such that $\alpha \in H^{1,1}(Z,\mathbb{Z})$ and there is~\mbox{$\gamma \in \mathrm{Mon}(Y)$} such that $\gamma(Z)$ is bimeromorphic to~$\mathrm{Hilb}^n(S)$ for some $\mathrm{K3}$ surface~$S.$ Assume that $\gamma(\alpha)=rx-be,$ where~\mbox{$\mathrm{Pic}(S)=\langle x \rangle,$ $b \in [0,\frac{r}{2}]$} is an integer number and $r$ divides $t=2(n-1).$ If~\mbox{$q(x) \geqslant -2,$} then the class of $\widehat{\gamma(\alpha)} \in H_2(Z,\mathbb{Q})$ on $Z$ is represented by a smooth rational curve.

\end{Lemma}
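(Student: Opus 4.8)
The plan is to recognize this statement as the sufficiency counterpart of Lemma~\ref{lemma:K3minimalcurves}: that lemma produces the desired rational curve \emph{provided} a smooth curve $C\subset S$ of genus $g$ in the class $x$ is available, and the whole point of the hypothesis $q(x)\geqslant-2$ is to supply exactly such a $C$. Since representability of a class by a rational curve on a good birational model is preserved under the deformation and monodromy identifications arranged in Steps~1--2---MBM-ness being deformation-invariant by Proposition~\ref{prop:simultaneously}, and a bimeromorphism of IHSMs being an isomorphism in codimension one---it suffices to construct a smooth rational curve on $M:=\mathrm{Hilb}^n(S)$ whose class is dual to $\gamma(\alpha)=rx-be$; this curve then transports back to the class dual to $\alpha$ on $Z$. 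Note that $\gamma(\alpha)$ is primitive and $q(\gamma(\alpha))=q(\alpha)<0$ since $\gamma$ is an isometry.

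First I would produce $C$. As $\mathrm{Pic}(S)=\langle x\rangle$ has rank one, replacing $x$ by $-x$ if necessary (which alters the MBM class only by a sign, cf.\ Remark~\ref{remark:vanishingofcurve}) we may take $x$ in the positive cone, and Riemann--Roch on the $\mathrm{K3}$ surface $S$ shows that $q(x)\geqslant-2$ forces $x$ to be represented by a smooth irreducible curve $C$ of genus $g=\tfrac12 q(x)+1$: a smooth rational $(-2)$-curve when $q(x)=-2$ ($g=0$), an elliptic fibre when $q(x)=0$ ($g=1$), and a general---hence smooth and irreducible---member of the base-point-free system $|x|$ when $q(x)>0$ ($g\geqslant2$).

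With $C$ in hand, every hypothesis of Lemma~\ref{lemma:K3minimalcurves} holds for $\beta=\gamma(\alpha)=rx-be$ (it is primitive, $r\mid t$, $b\in[0,\tfrac{r}{2}]$, and $q(\beta)<0$). Its proof then yields, via the Riemann--Hurwitz construction of Lemma~\ref{RHK3} applied to a pencil $g^1_k$ on $C$ with $k=\tfrac{bt}{r}-g+1$, a rational curve $R\subset M$ in the class $\widehat{\gamma(\alpha)}\equiv C-(g-1+k)\widehat{e}$; here $1\leqslant k\leqslant n$ and $k$ is at least the gonality of $C$, as verified in the proof of Lemma~\ref{lemma:K3minimalcurves} using Lemma~\ref{lemma:K3k>2g-2}. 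It remains only to see that $R$ may be taken smooth.

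I expect the smoothness to be the main obstacle. The curve $R$ is the image of the $\mathbb{P}^1$ parametrizing the pencil under $\mathbb{P}^1\hookrightarrow\mathrm{Sym}^k(C)\hookrightarrow\mathrm{Sym}^n(S)$, lifted across the Hilbert--Chow morphism, so $R$ is automatically rational and smoothness amounts to the induced morphism $\mathbb{P}^1\to\mathrm{Hilb}^n(S)$ being a closed immersion. Choosing the pencil general, so that $\phi\colon C\to\mathbb{P}^1$ has only simple, distinct branch points while the $n-k$ auxiliary points stay off $C$ and apart, the morphism is injective; at a branch point the two colliding reduced points determine a length-two subscheme whose elementary symmetric coordinates depend on the base parameter $t$ to first order, so the morphism is also an immersion there. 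Hence $R\cong\mathbb{P}^1$ is smooth. Verifying the existence of such a general pencil of the prescribed degree $k$ (immediate for $g\leqslant1$, and for $g\geqslant2$ guaranteed by the gonality inequality of Lemma~\ref{lemma:K3k>2g-2}) completes the argument.
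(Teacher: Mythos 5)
Your proposal is correct and takes essentially the same route as the paper: the paper's own proof consists precisely of computing $\widehat{\gamma(\alpha)}=x-\frac{b}{r}e$ and invoking Lemma~\ref{lemma:K3minimalcurves} for $\beta=\gamma(\alpha)=rx-be$. Your two additional steps --- producing the smooth curve $C$ in the class $x$ via Riemann--Roch (which the paper silently assumes, since it is a hypothesis of Lemma~\ref{lemma:K3minimalcurves}) and checking that the resulting rational curve $R$ is smooth (which Lemma~\ref{lemma:K3minimalcurves} does not assert) --- are legitimate refinements filling in details the paper leaves implicit, not a different argument.
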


\begin{proof}
Consider the dual class $\widehat{\gamma(\alpha)} \in H_2(Z,\mathbb{Q})$ of $\gamma(\alpha) \in H^2(Z,\mathbb{Z}).$ Note that 
$$
\widehat{\gamma(\alpha)}=\frac{\gamma(\alpha)}{d(\gamma(\alpha))}=x-\frac{b}{r}e=x-\frac{bt}{r}\widehat{e}.
$$
\noindent Assume that $ \gamma(Z)$ is bimeromorphic to $\mathrm{Hilb}^n(S)$ with $\mathrm{Pic}(S)=\langle x \rangle$ and $q(x) \geqslant -2$ such that
\begin{equation}\label{eq:b<r/2}
\gamma(\alpha)=rx-be, \;\; r \;\; \text{divides} \;\; t, \;\;  b \in [0,\frac{r}{2}] \;\; \text{is integer and} \;\; \gcd(r,b)=1.
\end{equation}

\noindent So by Lemma~\ref{lemma:K3minimalcurves} we get the desired, i.e. we obtain a smooth rational curve on~$Z.$

\end{proof}

\begin{Cor}\label{cor:q>2thusmbmK3}
Let $Y$ be an IHSM of $\mathrm{K3}$ type of dimension $2n.$ Let $\alpha \in H^{1,1}(Y,\mathbb{Z})$ be a primitive class. Assume that~$Y$ is deformation equivalent to the variety $Z$ such that $\alpha \in H^{1,1}(Z,\mathbb{Z})$  and there is~\mbox{$\gamma \in \mathrm{Mon}(Y)$} such that $\gamma(Z)$ is bimeromorphic to~$\mathrm{Hilb}^n(S)$ for some $\mathrm{K3}$ surface~$S$ with $\mathrm{Pic}(S)=\langle x \rangle.$  Assume that $\gamma(\alpha)=rx-be,$ where  $b \in [0,\frac{r}{2}]$ is an integer number and $r$ divides $t.$ If $q(x) \geqslant -2,$ then the class~$\alpha$ is MBM.

\end{Cor}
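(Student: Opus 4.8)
The plan is to read the corollary off from Lemma~\ref{lemma:K3q>-2} together with the minimality criterion of Remark~\ref{remark:criteriaminimality}, and then to transport the conclusion back to $Y$ using the monodromy- and deformation-invariance of the MBM property. First I would apply Lemma~\ref{lemma:K3q>-2} (recall that $q(\alpha)<0$ throughout the proof of Theorem~\ref{TheoremK3}): under the present hypotheses it exhibits the dual class $\widehat{\gamma(\alpha)}=x-\tfrac{b}{r}e$ as the class of a smooth rational curve $R$ on $\mathrm{Hilb}^n(S)$, which is bimeromorphic to $\gamma(Z)$. What I really need from the underlying Lemma~\ref{lemma:K3minimalcurves}, however, is more than the curve itself: it identifies the MBM locus $\mathcal{Z}$ of $R$ and shows that $\mathcal{Z}$ is a rational quotient whose fibre dimension equals its codimension in $\mathrm{Hilb}^n(S)$.

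To promote $R$ to a \emph{minimal} rational curve I would verify the remaining hypothesis of Remark~\ref{remark:criteriaminimality}, namely that the base of this rational quotient is not unirational, treating the two genus regimes of Lemma~\ref{lemma:K3minimalcurves} separately. When $g>0$ the base involves the relative Jacobian $\mathcal{J}$, which is deformation equivalent to $\mathrm{Hilb}^g(S)$ and hence is an IHSM carrying a holomorphic symplectic form, so it is not unirational. When $g=0$, i.e.\ $q(x)=-2$, the base is $\mathrm{Sym}^{n-k}(S)$, which carries a nonzero holomorphic $2$-form pulled back from $S$ and is again not unirational. Invoking Theorems~\ref{th:minimaliff2n-2} and~\ref{th:criteriaminimality} as packaged in Remark~\ref{remark:criteriaminimality}, I conclude that $R$ is minimal.

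It then remains to deduce MBM-ness. Since $R$ is a minimal rational curve whose class $\widehat{\gamma(\alpha)}$ has negative Beauville--Bogomolov square, and since $\mathrm{Hilb}^n(S)$ is a deformation of $Y$, Proposition~\ref{prop:convenientdefinitionofMBM} shows that $\gamma(\alpha)$ is MBM. The MBM property is invariant under $\mathrm{Mon}(Y)$ by its very definition (Definition~\ref{MBM1401}), so $\alpha$ is MBM; and by Proposition~\ref{prop:simultaneously} this holds simultaneously on every model on which $\alpha$ is of type $(1,1)$, in particular on $Y$ itself.

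I expect the only genuine work to be the minimality step, since that is the one input not already produced verbatim in Lemma~\ref{lemma:K3minimalcurves}. The non-unirationality check splits into the two cases above; one must also watch the extreme subcase $g=0$, $k=n$, where $\mathrm{Sym}^{n-k}(S)$ collapses to a point and the packaged criterion does not apply directly. There I would instead appeal to Theorem~\ref{th:minimaliff2n-2}, observing that the lines in the projective-space MBM locus already sweep out a $(2n-2)$-parameter family. The passage from $\gamma(\alpha)$ on $\mathrm{Hilb}^n(S)$ back to $\alpha$ on $Y$ is then formal, resting on the invariance statements recalled above.
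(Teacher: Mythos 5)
Your proof follows the paper's own argument essentially verbatim: Lemma~\ref{lemma:K3q>-2} produces the smooth rational curve representing $\widehat{\gamma(\alpha)}$, Lemma~\ref{lemma:K3minimalcurves} together with Remark~\ref{remark:criteriaminimality} (and Theorem~\ref{th:analiticallyequiv}) yields minimality, and deformation/monodromy invariance of the MBM property transports the conclusion back to $Y$. If anything you are more careful than the published one-line proof, which never explicitly verifies the non-unirationality of the base of the rational quotient demanded by Remark~\ref{remark:criteriaminimality}, nor the degenerate subcase $g=0$, $k=n$ where that base collapses to a point; your checks (holomorphic $2$-forms on $\mathcal{J}$ and on $\mathrm{Sym}^{n-k}(S)$, and the $(2n-2)$-parameter family of lines in the Lagrangian $\mathbb{P}^n$ via Theorem~\ref{th:minimaliff2n-2}) are correct and close those small gaps.
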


\begin{proof}
By Lemma~\ref{lemma:K3q>-2} we get that the class of $\widehat{\gamma(\alpha)} \in H_2(Z,\mathbb{Q})$ on $Z$ is represented by a smooth rational curve. Since it is of negative square by definition of MBM curve it is enough to prove that the curve $R$ which represents $\widehat{\gamma(\alpha)}$ is minimal. The minimality follows from Remark~\ref{remark:criteriaminimality} and  Lemma~\ref{lemma:K3minimalcurves}, since by Theorem~\ref{th:analiticallyequiv} it is enough to find MBM locus of $R$ on $\mathrm{Hilb}^n(S).$

\end{proof}

\textbf{Step 5.} \emph{In this step we prove that $\alpha \in H^2(X,\mathbb{Z})$ is MBM  if and only if its rational multiple satisfies the assumption of Theorem~\ref{TheoremK3}.}  Recall that an element~$\alpha$  is MBM if and only if the manifold $X$ can be deformed so that on the deformed variety the class $\widehat{\alpha} \in H_2(X, \mathbb{Q})$ is represented by an extremal rational curve. By Lemma~\ref{lemmaAVK3} the element $\alpha$ lies in the same monodromy orbit as the element 
$$
\alpha'=rx-b'e,
$$
\noindent where $r$ divides $t,$  $b' \in [0,r]$ and $\gcd(r,b)=1.$ Let $t=rs,$ where $s \in \mathbb{N}.$ Note that by Lemma~\ref{GritsenkoK3A} we have $q(\alpha)=q(\alpha')$ and~\mbox{$\delta(\alpha)=\pm \delta(\alpha').$} By direct calculation we get that $\delta(\alpha')=-b's.$ Therefore, we obtain
$$
-\delta(\alpha) = -b's \in [0,n-1].
$$
\noindent As 
$$
q(\widehat{\alpha})=q(x-\frac{b'}{r}e)=2a-\frac{b'^2t}{r^2}=2a-\frac{b'^2s^2}{t}
$$
\noindent for $q(x)=2a,$ where $q(x)$ is even because the lattice of the second cohomologies of $\mathrm{K3}$ surface is even. By Step $3$ and $4$ we get that $\alpha$ is MBM if and only if $q(x) \geqslant -2.$ This means that
$$
-2 \leqslant 2a <\frac{b^2}{2(n-1)}.
$$

\textbf{Step 6.} \emph{In this step we prove that the numerical conditions on MBM class~$\alpha$ defined by~\eqref{eq:bin0,n-1K3},~\eqref{eq:q(alpha)K3} and~\eqref{eq:-2<2a<bK3} uniquely determine its monodromy orbit.}  Some calculations will be similar to the calculations which we did in the proof of Lemma~\ref{lemmaAVK3}. Let us fix integer numbers $a$ and $b$ which satisfy~\eqref{eq:bin0,n-1K3},~\eqref{eq:q(alpha)K3} and~\eqref{eq:-2<2a<bK3}.  Let us find all primitive $\alpha \in H^2(X,\mathbb{Z})$ such that
\begin{equation}\label{eq:deltaq(hata)K3}
\delta(\alpha)=\pm b \quad \text{and} \quad q(\widehat{\alpha})=2a-\frac{b^2}{2(n-1)}.
\end{equation}

 Consider $\alpha=fx+de$ on some deformation $Y$ of $X,$ such that $Y$ is bimeromorphic to the Hilbert scheme of points on a $\mathrm{K3}$ surface $S,$ where $f,d \in \mathbb{Z},$ $e \in H^2(X, \mathbb{Z})$ is the class of half of the exceptional divisor on $Y$ and $x \in H^2(S,\mathbb{Z}).$   Recall (by Definition~\ref{definition:divisib}) that 
$$
d(\alpha)=\gcd(f,2(n-1)).
$$
\noindent Let $f=f' d(\alpha).$  Since $\delta(\alpha)=\pm b,$ by the definition of $\delta(\alpha)$ we get that 
$$
d=\pm \frac{bd(\alpha)}{2(n-1)} + cd(\alpha)
$$
\noindent for some integer $c.$ Note that by our assumptions $d$ is also integer. We have
\begin{equation}\label{eq:alpa=b+cdK3}
\alpha=f'd(\alpha)x+\left(\pm \frac{bd(\alpha)}{2(n-1)} + cd(\alpha)\right)e.
\end{equation}
\noindent  So by \eqref{eq:deltaq(hata)K3}  a primitive $\alpha \in H^2(X,\mathbb{Z})$ satisfies \eqref{eq:deltaq(hata)K3} if and only if it is of the form~\eqref{eq:alpa=b+cdK3} such that 
$$
f'^2q(x)-2(n-1)c^2 \mp 2bc=2a.
$$ 
\noindent By direct computations it is not hard to see that any $\alpha$ of the form \eqref{eq:alpa=b+cdK3} lies in the same monodromy orbit as
$$
\alpha'=d(\alpha)y-\frac{bd(\alpha)}{2(n-1)}e,
$$
\noindent where $y \in H^2(X,\mathbb{Z})$ such that $q(y,e)=0$ and $q(y)=2a.$  Note that $\alpha'$ satisfies~\eqref{eq:deltaq(hata)K3}.  If $b=0,$ then $d(\alpha)=1,$ so $\alpha'=z$ such that $q(z)=-2.$ By Lemma~\ref{GritsenkoK3A} all such elements lie in the same monodromy orbit. Let $b \neq 0.$ It is enough to prove that all elements of the form 
$$
\alpha'=mz-\frac{bm}{2(n-1)}e,
$$
\noindent where  
$$
\frac{bm}{2(n-1)} \in \mathbb{N}, \quad \gcd\left(m,\frac{bm}{2(n-1)}\right)=1  \quad \text{and} \quad m \;\; \text{divides} \;\; 2(n-1),
$$
\noindent  which satisfy~\eqref{eq:deltaq(hata)K3}, lie in the same monodromy orbit. Indeed, let
$$
r=\gcd(b,2(n-1)).
$$
\noindent Let $b=b' \cdot r$ and $2(n-1)=v \cdot r.$ Then since 
$$
\frac{bm}{2(n-1)}=\frac{b' \cdot m}{v}
$$
\noindent is integer number, we get that $v$ divides $m.$ By   the condition $ \gcd(m,\frac{bm}{2(n-1)})=1$ we get that 
$$
m=v=\frac{2(n-1)}{r}.
$$

\noindent This means that all primitive $\alpha$ which satisfy \eqref{eq:deltaq(hata)K3} lie in the same monodromy orbit as the class 
$$
\alpha'=\frac{2(n-1)}{\gcd(b,2(n-1))}z-\frac{b}{\gcd(b,2(n-1))}e,
$$
\noindent where $q(z)=2a$  on some deformation $Y$ of $X,$ such that $Y$ is bimeromorphic to the Hilbert scheme of points on a $\mathrm{K3}$ surface $S,$ where  $e \in H^2(X, \mathbb{Z})$ is the class of half of the exceptional divisor on $Y$ and $z \in H^2(S,\mathbb{Z}).$

\end{proof}

\begin{proof}[Proof of Corollary~\ref{cor:TheoremK3}]
Assertion \ref{1} follows from  Lemma~\ref{GritsenkoK3A}. Assertion \ref{2} follows from Lemma~\ref{lemma:K3thenq>-2} and Corollary~\ref{cor:q>2thusmbmK3}. Note that if $(a,b)=(0,1),$ then by   Lemma~\ref{GritsenkoK3A} the elements $e$ and $\alpha$ lie in the same monodromy orbits.

\end{proof}

\begin{Cor}
Let $X$ be an IHSM of $\mathrm{K3}$ type of dimension $2n.$ Let $\alpha \in H^2(X,\mathbb{Z})$ be an MBM class. Assume that $\alpha$ up to a rational multiple can be represented by
$$
2(n-1) \cdot x-be
$$
\noindent with integer number $b \in [0,n-1]$ on some deformation $Y=\mathrm{Hilb}^n(S)$ of $X$ with Picard group $\mathrm{Pic}(Y)=\mathrm{Pic}(S) \oplus \mathbb{Z}e,$  such that $S$ is a $\mathrm{K3}$ surface with the Picard group  $\mathrm{Pic}(S)=\mathbb{Z}x,$  $x^2 \geqslant -2$ and $e$ is a class of half of the exceptional divisor on~$Y.$ Then
$$
g=\frac{x^2}{2}+1 \leqslant  \lceil \frac{n+3}{4} \rceil-1.
$$
\end{Cor}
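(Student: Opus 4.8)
The plan is to reduce the statement entirely to the numerical inequality \eqref{eq:-2<2a<bK3} already established in Theorem~\ref{TheoremK3}, since that inequality is precisely a bound on the square $x^2 = 2a$ in terms of the divisibility parameter $b$, and $g$ is by definition a simple function of $x^2$.

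First I would record the bookkeeping identifying $g$ with the parameter $a$. By the second assertion of Corollary~\ref{cor:TheoremK3}, the representative $2(n-1)\cdot x - be$ is chosen so that $\mathrm{Pic}(S) = \mathbb{Z}x$ with $x^2 = 2a$, where $a$ is the integer appearing in \eqref{eq:q(alpha)K3}; this square is even because the $\mathrm{K3}$ lattice is even, consistently with the adjunction formula $x^2 = 2g-2$ for a smooth curve in the class $x$. Hence $g = \frac{x^2}{2} + 1 = a + 1$, and the hypothesis $x^2 \geq -2$ just records $a \geq -1$, so $g \geq 0$.

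Next I would feed in the MBM constraints. Theorem~\ref{TheoremK3} gives the strict bound $2a < \frac{b^2}{2(n-1)}$ from \eqref{eq:-2<2a<bK3}, together with the range $b \in [0,n-1]$ from \eqref{eq:bin0,n-1K3}. Since $b \leq n-1$ we have $\frac{b^2}{2(n-1)} \leq \frac{(n-1)^2}{2(n-1)} = \frac{n-1}{2}$, and therefore $2a < \frac{n-1}{2}$, that is $a < \frac{n-1}{4}$. Substituting $g = a+1$ gives $g < \frac{n-1}{4} + 1 = \frac{n+3}{4}$.

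Finally I would pass from this strict real inequality to the stated integer bound: as $g$ is an integer and $\frac{n+3}{4} \leq \lceil \frac{n+3}{4} \rceil$, the strict inequality $g < \frac{n+3}{4}$ forces $g \leq \lceil \frac{n+3}{4} \rceil - 1$. The computation is short and there is no real obstacle beyond this last rounding step, where care is needed precisely in the boundary case $b = n-1$ together with the case $n \equiv 1 \pmod 4$ (so that $\frac{n+3}{4}$ is itself an integer): there $\lceil \frac{n+3}{4} \rceil - 1 = \frac{n-1}{4}$, and it is the \emph{strictness} of \eqref{eq:-2<2a<bK3} that makes the bound come out correctly rather than off by one.
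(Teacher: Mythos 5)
Your proof is correct and essentially the same as the paper's: the key inequality $x^2<\frac{b^2}{2(n-1)}$ that you extract from the strict bound \eqref{eq:-2<2a<bK3} of Theorem~\ref{TheoremK3} is exactly what the paper obtains directly from MBM-negativity, via $q\left(2(n-1)x-be\right)=4(n-1)^2x^2-2(n-1)b^2<0.$ From that point both arguments coincide, using $b\leqslant n-1$ to get the strict bound $g<\frac{n+3}{4}$ and then the same integrality/rounding step to reach $g\leqslant\lceil\frac{n+3}{4}\rceil-1.$
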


\begin{proof}
Since $\alpha$ is MBM we get that $q(\alpha)<0.$ So we have
$$
q(2(n-1) \cdot x-be)=4(n-1)^2x^2-2(n-1)b^2<0.
$$
\noindent Therefore, we get $g<\frac{b^2}{4(n-1)}+1 \leqslant \frac{n+3}{4}.$ Thus, we obtain the inequality 
$$
g \leqslant  \lceil \frac{n+3}{4} \rceil - 1 .
$$

\end{proof}

\begin{Cor}[\textbf{of the proof}]\label{iffK3}
Let $X$ be an IHSM of $\mathrm{K3}$ type of dimension~$2n.$ Let $\alpha \in H^2(X,\mathbb{Z})$ be a primitive class and $\widehat{\alpha}\,=\,x-\frac{b}{2(n-1)}e$ be its dual with integer number~\mbox{$b \in [0,n-1]$} on some deformation $Y$ of $X,$ such that~\mbox{$Y \simeq \mathrm{Hilb}^n(S),$} where~$S$ is a $\mathrm{K3}$ surface such that  $\mathrm{Pic}(S)=\mathbb{Z}x.
$ Then $\alpha$ is MBM if and only if
$$
q(x) \geqslant -2 \quad  \text{and} \quad q(\alpha)\,<\,0.
$$
\end{Cor}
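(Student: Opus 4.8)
The plan is to read Corollary~\ref{iffK3} as a geometric restatement of Theorem~\ref{TheoremK3} and to prove it by matching the given data $(x,b,Y)$ to the numerical invariants $(a,b)$ of that theorem. The starting computation is that, since $x\in\mathrm{Pic}(S)$ is orthogonal to $e$ and $q(\widehat e)=-\tfrac{1}{2(n-1)}$, the dual class $\widehat\alpha=x-\tfrac{b}{2(n-1)}e$ satisfies
\begin{equation*}
q(\widehat\alpha)=q(x)-\frac{b^{2}}{2(n-1)}.
\end{equation*}
Because $H^{2}(S,\mathbb{Z})$ is an even lattice, $q(x)$ is even, so writing $q(x)=2a$ reproduces exactly~\eqref{eq:q(alpha)K3}. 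Moreover $q(\alpha)<0$ is equivalent to $q(\widehat\alpha)<0$, the two differing by the positive factor $d(\alpha)^{2}$; this is the right-hand inequality $2a<\tfrac{b^{2}}{2(n-1)}$ in~\eqref{eq:-2<2a<bK3}, while $q(x)\ge -2$ is the left-hand inequality $-2\le 2a$.

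First I would confirm that the given $b$ is forced to be the invariant $\pm\delta(\alpha)$ demanded by the theorem. Reducing $\widehat\alpha=\alpha/d(\alpha)$ modulo $H^{2}(X,\mathbb{Z})$ kills the integral summand $x$ and leaves $-\tfrac{b}{2(n-1)}e$, whose image in the discriminant group $\mathbb{Z}/2(n-1)\mathbb{Z}$ is $\mp b$; since $b\in[0,n-1]=[0,\tfrac{2(n-1)}{2}]$ is precisely the canonical representative, we get $\pm\delta(\alpha)=b$, matching~\eqref{eq:bin0,n-1K3} (here primitivity of $x$ in the unimodular lattice guarantees that $\widehat\alpha$ really is $\alpha/d(\alpha)$). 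With $a$ defined by $q(x)=2a$, the hypotheses of the corollary together with the pair $q(x)\ge -2$, $q(\alpha)<0$ are then literally the numerical condition~\eqref{eq:bin0,n-1K3}--\eqref{eq:-2<2a<bK3}, and the equivalence with being MBM is Theorem~\ref{TheoremK3}.

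If instead one argues from the intermediate steps rather than quoting the theorem as a whole, the two implications are Step~3 and Step~4: necessity of $q(\alpha)<0$ is immediate from the definition of an MBM class, necessity of $q(x)\ge -2$ is Lemma~\ref{lemma:K3thenq>-2}, and sufficiency is Corollary~\ref{cor:q>2thusmbmK3}. The only point requiring care --- and the one I expect to be the main, if minor, obstacle --- is that those results are phrased for the normalized representative $\gamma(\alpha)=rx'-b'e$ with $r$ dividing $t=2(n-1)$ and $b'\in[0,\tfrac r2]$, whereas the corollary fixes the representative $\widehat\alpha=x-\tfrac{b}{2(n-1)}e$ of unit $x$-coefficient. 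One reconciles the two by noting that $q(x)$ is unaffected by the choice: applying Lemma~\ref{lemmaAVK3} to pass to $\gamma(\alpha)$ and using that $\gamma$ preserves both $q(\widehat\alpha)$ and $\pm\delta(\alpha)$ forces $b=b'\cdot\tfrac{t}{r}$, hence $q(x')=q(\widehat\alpha)+\tfrac{b^{2}}{t}=q(x)$, so that Lemma~\ref{lemma:K3thenq>-2} and Corollary~\ref{cor:q>2thusmbmK3} apply verbatim to the Picard generator of the corollary's model.
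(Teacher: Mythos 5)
Your proposal is correct and follows the paper's intended derivation: the corollary is precisely the content of Steps 3 and 4 of the proof of Theorem~\ref{TheoremK3} (Lemma~\ref{lemma:K3thenq>-2} for necessity of $q(x)\geqslant -2$, Corollary~\ref{cor:q>2thusmbmK3} for sufficiency, with $q(\alpha)<0$ forced by the definition of MBM), and your normalization check is exactly why those statements apply, since $\alpha=d(\alpha)\widehat{\alpha}=rx-b'e$ with $r=2(n-1)/\gcd(b,2(n-1))$ and $b'=rb/2(n-1)\in[0,\tfrac r2]$ is already in the required normalized form. Your dictionary with the numerical conditions of Theorem~\ref{TheoremK3} ($q(x)=2a$, $b$ the representative of $\pm\delta(\alpha)$ in $[0,n-1]$, and $q(\alpha)<0\Leftrightarrow 2a<\tfrac{b^2}{2(n-1)}$) is the same matching the paper carries out in Step 5, so the two routes coincide.
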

\begin{Cor}[{\cite[cf. Theorem 2]{Bakker}}]
Let $X$ be an IHSM of $\mathrm{K3}$ type of dimension~$2n.$ Let $\alpha \in H^2(X,\mathbb{Z})$ be a primitive class and $\widehat{\alpha} \in H^2(X, \mathbb{Q})$ be its dual class. Let $\alpha$ be MBM. Then we get
$$
q(\widehat{\alpha}) \geqslant -\frac{n+3}{2}.
$$
\noindent Moreover, the equality holds if and only if $\alpha$ is a class of the line in the Lagrangian space $\mathrm{Sym}^n(C),$ where $C$ is a smooth rational curve on $\mathrm{K3}$ surface. In this case the MBM locus of the class $\alpha$  on $X$ is bimeromorphic to  $\mathrm{Sym}^n(C).$
\end{Cor}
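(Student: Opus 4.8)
The plan is to extract both the inequality and the equality case directly from Theorem~\ref{TheoremK3}, and then to read off the geometry in the extremal case from the explicit curve construction of Lemmas~\ref{RHK3} and~\ref{lemma:K3minimalcurves}.

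First I would apply Theorem~\ref{TheoremK3}: since $\alpha$ is MBM, there are integers $a$ and $b$ with $b\in[0,n-1]$ such that
$$
q(\widehat{\alpha})=2a-\frac{b^2}{2(n-1)},\qquad -2\leqslant 2a<\frac{b^2}{2(n-1)}.
$$
The bound is then purely arithmetic: from $2a\geqslant-2$ and $0\leqslant b\leqslant n-1$ we get $\frac{b^2}{2(n-1)}\leqslant\frac{n-1}{2}$, whence
$$
q(\widehat{\alpha})=2a-\frac{b^2}{2(n-1)}\geqslant -2-\frac{n-1}{2}=-\frac{n+3}{2}.
$$
Since both summands $2a+2$ and $\frac{n-1}{2}-\frac{b^2}{2(n-1)}$ are nonnegative and their sum is $q(\widehat{\alpha})+\frac{n+3}{2}$, equality holds if and only if both vanish, i.e. $2a=-2$ and $b=n-1$.

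Next I would translate the extremal values $a=-1$, $b=n-1$ into geometry. As $(a,b)\neq(0,1)$, the second assertion of Corollary~\ref{cor:TheoremK3} lets me deform $X$ to $Y=\mathrm{Hilb}^n(S)$ with $\mathrm{Pic}(S)=\mathbb{Z}x$ and $x^2=2a=-2$, so that $\widehat{\alpha}=x-\frac{b}{2(n-1)}e=x-\tfrac12 e\equiv C-(n-1)\widehat{e}$ (using $e=2(n-1)\widehat{e}$). Since $x^2=-2$ and $\mathrm{Pic}(S)=\mathbb{Z}x$, Riemann--Roch shows that $x$, after replacing it by $-x$ if necessary, is represented by a smooth irreducible rational curve $C\subset S$ of genus $g=\frac{x^2}{2}+1=0$. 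Comparing $C-(n-1)\widehat{e}$ with the expression $C-(g-1+k)\widehat{e}$ of Lemma~\ref{RHK3} forces $k=n$, so $\widehat{\alpha}$ is represented by the image $R$ of $C$ under the map attached to a $g^1_n$, which is precisely a line in $\mathrm{Sym}^n(C)=\mathrm{Sym}^n(\mathbb{P}^1)=\mathbb{P}^n$. Substituting $g=0$, $k=n$ into the MBM-locus computation of Lemma~\ref{lemma:K3minimalcurves} gives the locus $\mathrm{Sym}^k(C)\times\mathrm{Sym}^{n-k}(S)=\mathrm{Sym}^n(C)\times\mathrm{Sym}^{0}(S)=\mathbb{P}^n$, the Lagrangian projective space; hence by Theorem~\ref{th:analiticallyequiv} the MBM locus of $\alpha$ on $X$ is bimeromorphic to $\mathrm{Sym}^n(C)$.

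For the converse I would simply compute. If $\alpha$ is the class of a line in $\mathrm{Sym}^n(C)$ for a smooth rational curve $C$, then Lemma~\ref{RHK3} with $g=0$, $k=n$ gives $\widehat{\alpha}\equiv C-(n-1)\widehat{e}$, and using $q(C)=-2$, $q(C,\widehat{e})=0$ (as $e\perp\mathrm{NS}(S)$) and $q(\widehat{e})=-\frac{1}{2(n-1)}$ one finds
$$
q(\widehat{\alpha})=q(C)+(n-1)^2q(\widehat{e})=-2-\frac{n-1}{2}=-\frac{n+3}{2}.
$$
I expect the only genuinely delicate point to be the identification of the extremal curve $R$ with a line in the Lagrangian $\mathbb{P}^n$ and of its MBM locus with $\mathrm{Sym}^n(C)$; this is exactly where one must match the normalization $k=n$ produced by $b=n-1$ with the projective-bundle description of Lemma~\ref{lemma:K3minimalcurves}. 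The arithmetic bound itself is immediate once Theorem~\ref{TheoremK3} is available.
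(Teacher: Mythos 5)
Your proposal is correct and takes essentially the same route as the paper: the inequality is read off arithmetically from Theorem~\ref{TheoremK3}, equality is pinned down as $2a=-2$, $b=n-1$, and the extremal class and its MBM locus are then identified via Lemma~\ref{RHK3}, Lemma~\ref{lemma:K3minimalcurves} and Theorem~\ref{th:analiticallyequiv}. If anything, your treatment of the equality case (explicit use of Corollary~\ref{cor:TheoremK3} and the substitution $g=0$, $k=n$ into the MBM-locus computation, plus the direct converse calculation) is more detailed than the paper's, which partly defers to a reference of Hassett--Tschinkel.
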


\begin{proof}

By Theorem~\ref{TheoremK3} we get that either $q(\widehat{\alpha})=-\frac{1}{2(n-1)},$ or $q(\widehat{\alpha})\,=\,2a-\frac{b^2}{2(n-1)},$  where $b \in [0,n-1]$ is an integer number and  $a$ is an integer number which satisfies the inequalities
$$
 -2 \leqslant 2a<\frac{b^2}{2(n-1)}.
$$

\noindent We get
$$
q(\widehat{\alpha})\,=2a-\frac{b^2}{2(n-1)} \geqslant -2-\frac{(n-1)}{2}\,=\,-\frac{n+3}{2}.
$$

\noindent For the second part we refer to~\cite[Example 4.11]{Tschinkel}, as the equality holds if and only if $q(x)\,=\,-2$ and $b\,=\,n-1.$ Actually, this follows from Lemma~\ref{RHK3}. Indeed,  we have
$$
\widehat{\alpha}=C-(n-1)\widehat{e}
$$
\noindent for some rational curve $C$ on a $\mathrm{K3}$ surface. Let $R$ is a rational curve in the class~$\widehat{\alpha}.$ This means that $C$ maps to the rational curve $R$ by the map of degree $n$ and the intersection of $R$  with the general fibre of the exceptional divisor of the Hilbert--Chow morphism is equal to $n-1.$  Linear systems~$g^1_n$ on the rational curve~$C$ are parametrized by~$\mathbb{P}^n$ and this gives us the third part. Therefore, by Theorem~\ref{th:analiticallyequiv} in this extremal case for the $\mathrm{K3}$ type manifolds the MBM locus is bimeromorphic to $\mathbb{P}^n.$

\end{proof}

A similar description of  MBM classes for both $\mathrm{K3}$ and Kummer cases was given in the paper~\cite{Knutsen}. However, the mentioned paper relies on the Brill-Noether theory, whereas our  approach is elementary.

Finally we prove Proposition  \ref{BMeqAV}.

\begin{proof}[Proof of Proposition \ref{BMeqAV}]

According to Bayer-Macr\`{i} theory we get that the walls are the hyperplanes in $\mathrm{Pos}(X)$ orthogonal to $a$ with $a^2 \geqslant -2$ ($a^2$ in a sense of Mukai scalar product). The vector $a\,=\,(u,\varkappa,s)$ belongs to 
$$
H^0(S,\mathbb{Z}) \oplus \mathrm{NS}(S) \oplus H^4(S,\mathbb{Z}).
$$
\noindent for a $\mathrm{K3}$ surface $S.$ To pass to our situation we need to project this vector on the subspace $v^{\perp},$ i.e. to consider~\mbox{$z\,=\,v^2a-(v,a)v.$} Obviously, this vector is orthogonal to $v.$ After some simple calculations we have 
$$
z\,=\,v^2\varkappa-(a,e)e,
$$
\noindent where without loss of generality we put $v\,=\,(1,0,1-n)$ and $e\,=\,(1,0,n-1).$ So the walls in $\mathrm{Pos}(X)$ are hyperplanes orthogonal to $z\,=\,v^2\varkappa-(a,e)e.$

By Theorem \ref{TheoremK3} and its proof $z$ is MBM if and only if we can deform $X$ to $Z$ such that there is $\gamma \in \mathrm{Mon}(X)$ such that $\gamma(Z)$ is bimeromorphic to $\mathrm{Hilb}^n(S)$ with a $\mathrm{K3}$ surface $S.$ Moreover, the following properties hold: the  Picard group is 
$$
\mathrm{Pic}(Z)\comm{_{\mathbb{Q}}}=\langle z, \gamma^{-1}(e) \rangle
$$
\noindent with $\gamma(z)=tx-be,$ where $b \in [0,\frac{t}{2}]$ is an integer,~\mbox{$q(x) \geqslant -2$} such that 
\begin{equation}\label{eq:BMq=}
q(z)=q(\gamma(z))
\end{equation}
\noindent  and $\gamma \in \text{Mon}(Z).$ As $(a,e)=-s-\frac{tu}{2},$ we have $b=\frac{tu}{2}-s,$ because by conditions of Theorem \ref{Theorem12.1} we have 
$$
0 \leqslant  \frac{tu}{2}-s=-s-\frac{tu}{2}+tu \leqslant  \frac{t}{2}.
$$
\noindent From \eqref{eq:BMq=} we get
$$
t^2q(\varkappa)-\left( \frac{tu^2}{4}+s^2+tus \right) t\,=\,t^2q(x)-\left( \frac{tu^2}{4}+s^2-tus \right) t.
$$

\noindent So we obtain $q(x)=\varkappa^2-2us=a^2 \geqslant -2.$ Thus, $z$ is an MBM class.

In the opposite direction, assume that $\alpha$ is an MBM class. Thus, there is a deformation $Z$ of $X$ such that  $\gamma(Z)$ is bimeromorphic to $\mathrm{Hilb}^n(S)$ for some $\mathrm{K3}$ surface $S,$ where 
$$
s\gamma(\alpha)=tx-be
$$
\noindent  for some positive integer $s$ which divides $t.$ We have $b \in [0,\frac{t}{2}].$ Thus, let us take 
$$
a=\left(u,\varkappa,\frac{tu}{2}-b\right)
$$
\noindent  for some integer number $u$ and $\varkappa^2=x^2+2u\left(\frac{tu}{2}-b\right).$ So we get that $a$ satisfies the conditions of Theorem \ref{Theorem12.1}. Indeed, 
$$
a^2=\varkappa^2-2u\left(\frac{tu}{2}-b\right)=x^2 \geqslant -2
$$
\noindent and
$$
(a,v)=\frac{tu}{2}-\frac{tu}{2}+b=b
$$
\noindent Thus, we get $0 \leqslant  (a,v) \leqslant  \frac{t}{2}.$ Moreover, we have that 
$$
z=t\varkappa-(a,e)e=t\varkappa-be
$$
\noindent is a projection of $a$ on $v^{\perp}.$

\end{proof}

\section{Description of nef cone for Kummer type manifolds}

In this section we prove Theorem \ref{TheoremKummer}. Let $X$ be a Kummer type manifold of dimension $2n.$ Let $e \in H^2(X,\mathbb{Z})$ be the class of half of the exceptional divisor on a deformation of $X$ which is the Kummer variety of dimension $2n$ for some complex two-dimensional  torus. We identify $H^2(X,\mathbb{Z})$ and $H^2(Y,\mathbb{Z})$ for any deformation~$Y$ of $X$ in $\mathrm{Teich},$ since topologically they are the same. Half of the exceptional divisor on a deformation of $X$ which is  Kummer varieties of dimension $2n$ for some complex two-dimensional  torus is then $\gamma(e),$ where $\gamma \in \mathrm{Mon}(X).$    By 
$$
\hat{e}\,=\,\frac{e}{2(n+1)} \in H_2(X,\mathbb{Z})
$$
\noindent we denote its dual class. We have an analogue of Lemma \ref{RHK3}:

\begin{Lemma}\label{RHA}

Let $A$ be a complex two-dimensional  torus. Let $C \subset A$ be a smooth curve of genus $g.$  Assume that there is a linear system $g^1_k$ on $C$ with $2 \leqslant k \leqslant n+1.$  Then there is a map $f \colon C \to \mathrm{Kum}^n(A)$ such that its image is a rational curve $R$ which is numerically equivalent to $C-(g-1+k)\hat{e}$ in~$\mathrm{N}_1(\mathrm{Kum}^n(A),\mathbb{Z}).$

\end{Lemma}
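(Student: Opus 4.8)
The plan is to follow the proof of Lemma~\ref{RHK3} almost verbatim, introducing one genuinely new ingredient to guarantee that the resulting rational curve lands inside the Kummer variety rather than merely inside $\mathrm{Hilb}^{n+1}(A)$. As before, the linear system $g^1_k$ produces a degree-$k$ morphism $C \to \mathbb{P}^1$ and hence an embedding $\mathbb{P}^1 \hookrightarrow \mathrm{Sym}^k(C)$ as a pencil of effective divisors; composing with $\mathrm{Sym}^k(C) \hookrightarrow \mathrm{Sym}^k(A)$ and then adding $n+1-k$ fixed points via $\mathrm{Sym}^k(A) \hookrightarrow \mathrm{Sym}^{n+1}(A)$ gives a rational curve $R$ whose generic member consists of $n+1$ distinct points, so that it lifts to $\mathrm{Hilb}^{n+1}(A)$. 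Note that the hypothesis $k \geqslant 2$ is forced: a complex torus carries no rational curves, so $C$ has genus $g \geqslant 1$ and therefore gonality at least $2$; the upper bound $k \leqslant n+1$ is exactly what is needed for the $n+1-k$ fixed points to exist.

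The new point is to arrange that $R$ lies in $\mathrm{Kum}^n(A)$, i.e.\ in the fibre over $0$ of the summation map $s \colon \mathrm{Hilb}^{n+1}(A) \to A$. The key observation is that all divisors in the pencil, being linearly equivalent on $C$, have the same image under the summation map $\mathrm{Sym}^k(C) \to A$. Indeed, this map factors as $\mathrm{Sym}^k(C) \to \mathrm{Pic}^k(C) \to A$, where the first arrow is $D \mapsto \mathcal{O}_C(D)$ and the second is induced by the inclusion $i \colon C \hookrightarrow A$ (viewing $\mathrm{Pic}^k(C)$ as a torsor under $\mathrm{Alb}(C)$ and using the induced homomorphism of tori $i_* \colon \mathrm{Alb}(C) \to A$); since $D \mapsto \mathcal{O}_C(D)$ is constant on a linear system, so is the total sum in $A$. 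Denote this common value by $a_0 \in A$. After translating $C$ by a suitable element of $A$ (which changes neither the genus nor the cohomology class $[C]$, as translations act trivially on $H^*(A)$) we may assume $a_0 = 0$, and then we choose the $n+1-k$ fixed points so that they too sum to $0$; hence $s(R) = 0$ identically and $R \subset \mathrm{Kum}^n(A)$.

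With $R$ realised inside $\mathrm{Kum}^n(A)$, the computation of its numerical class is identical to the $\mathrm{K3}$ case. Writing $R \equiv C - m\widehat{e}$ with $m \geqslant 0$ and using $q(e,\widehat{e}) = -1$ together with the identity $\Delta = 2e$ for the restricted diagonal $\Delta \cap \mathrm{Kum}^{n-1}(A)$, we obtain $m = q(R,e) = \tfrac12 q(R,\Delta)$. The intersection of $R$ with the exceptional divisor is exactly the ramification locus of the degree-$k$ cover $C \to \mathbb{P}^1$, so the Riemann--Hurwitz formula gives $\tfrac12 q(R,\Delta) = \tfrac12 \cdot 2(g-1+k) = g-1+k$, whence $R \equiv C - (g-1+k)\widehat{e}$ as claimed.

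The main obstacle, and the only essential difference from Lemma~\ref{RHK3}, is the second paragraph: one must check that the entire pencil lands in a single fibre of the summation map. Once the factorisation through the Jacobian is in place this is immediate, but it is precisely this constancy-of-sum phenomenon that has no counterpart in the $\mathrm{K3}$ setting and that, together with the absence of rational curves on $A$, dictates the range $2 \leqslant k \leqslant n+1$.
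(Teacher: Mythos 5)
Your proposal is correct, and its overall skeleton (pencil $\mathbb{P}^1 \subset \mathrm{Sym}^k(C)$, addition of $n+1-k$ fixed points, lift to the Hilbert scheme, Riemann--Hurwitz computation of the class) coincides with the paper's, which simply reduces to the proof of Lemma~\ref{RHK3}. Where you genuinely diverge is in the one step that constitutes the actual content of this lemma: showing that the whole pencil lands in a single fibre of the summation map. The paper argues via complex geometry: the summation restricted to the pencil is a holomorphic map $s \colon \mathbb{P}^1 \to A$, and since a complex torus contains no rational curves, $s$ must be constant; a translation of $C$ then moves the constant to $0$. You instead argue via Abel--Jacobi: the summation $\mathrm{Sym}^k(C) \to A$ factors as $\mathrm{Sym}^k(C) \to \mathrm{Pic}^k(C) \to A$ through the Albanese homomorphism $i_* \colon \mathrm{Alb}(C) \to A$, so linearly equivalent divisors have equal sums. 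Both arguments are sound. The paper's is shorter and needs nothing beyond the nonexistence of rational curves on a torus (a fact you also invoke, but only to justify the hypothesis $k \geqslant 2$); yours is slightly heavier but exposes the underlying mechanism and is more robust --- constancy of the sum on linear systems holds for any morphism from a curve to an abelian variety, with no appeal to the target being free of rational curves. A minor difference in bookkeeping: the paper kills the constant purely by translating $C$ (which works even when $k=n+1$, since translating by $\tau$ shifts the sum by $k\tau$ and $A$ is divisible), whereas you translate and additionally normalize the fixed points to sum to zero; your version is equally valid, the empty sum handling the case $k=n+1$.
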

\begin{proof}
The proof is similar to the proof of Lemma \ref{RHK3}. We only need to prove that one can translate $C$ by some point $x \in A$ such that the summation function on every element $x_1+x_2+...+x_{n+1}$ of the image  of $f$ is equal to zero.  There are maps
$$
C \xrightarrow{f} \mathbb{P}^1 \xrightarrow{s} A,
$$

\noindent where $f$ is induced by the linear system $g^1_k$ and $s$ is a summation. The function~$s$ can map $\mathbb{P}^1$ either in $\mathbb{P}^1,$ or in the point. But there is no rational curve in $A.$ So~\mbox{$s:\mathbb{P}^1 \to \text{pt}.$} Translating the curve $C,$ we get $\text{pt}\,=\,0.$

\end{proof}

We have the following important property which will be very useful for proving the minimality of curve in the process of proving Theorem~\ref{TheoremKummer}.

\begin{Lemma}\label{lemma:Kummerk>2g-2}
Let $M=\mathrm{Kum}^n(S)$ for a complex two-dimensional  torus $A$ such that~\mbox{$\mathrm{NS}(A)=\langle x \rangle$} and let~\mbox{$C \subset S$} be a smooth curve of genus $g \geqslant 1$ which corresponds to the class $x \in H^2(A, \mathbb{Z}).$ Let $t=2(n+1).$ Let $e \in  H^{1,1}(M,\mathbb{Z})$ be a class of half of the exceptional divisor on~$M.$ Assume that $\beta=rx-be \in H^{1,1}(M,\mathbb{Z})$ is a primitive class such that $r \in \mathbb{N},$
\begin{equation}\label{eq:Kummerk>2g-2b}
b \in [1,\frac{r}{2}] \quad \text{is an integer number,}
\end{equation}
\noindent and 
\begin{equation}\label{eq:Kummerk>2g-2bq>0}
q(\beta)<0.
\end{equation}
\noindent Then $k=\frac{bt}{r}-g+1>2g-2.$

\end{Lemma}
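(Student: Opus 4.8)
The plan is to follow verbatim the strategy of Lemma~\ref{lemma:K3k>2g-2}, since the Kummer case differs from the $\mathrm{K3}$ case only in the value of the parameter $t=2(n+1)$ and in the genus hypothesis $g\geqslant 1$ (which lets us discard the delicate boundary behaviour that occurred at $b=0$, $g=1$ in the $\mathrm{K3}$ situation). First I would expand the Beauville--Bogomolov square of $\beta=rx-be$. Because $e$ is orthogonal to $\mathrm{NS}(A)$ with $q(e)=-t$, while the restriction of $q$ to $\mathrm{NS}(A)$ agrees with the intersection form on the torus, one gets $q(\beta)=r^2q(x)-b^2t$. Since $C\subset A$ is a smooth curve of genus $g$ on a complex two-dimensional torus, the canonical class $K_A$ is trivial, so adjunction gives $q(x)=x^2=2g-2$. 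Thus the hypothesis \eqref{eq:Kummerk>2g-2bq>0} reads $r^2(2g-2)<b^2t$.

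Next I would dispose of the base case $g=1$. Here $q(x)=0$, so $q(\beta)=-b^2t<0$ holds automatically and in fact forces $b\geqslant 1$, which is consistent with \eqref{eq:Kummerk>2g-2b}; then
\[
k=\frac{bt}{r}-g+1=\frac{bt}{r}>0=2g-2,
\]
using $b\geqslant 1$ and $r,t>0$. This is exactly the step where the restriction $b\in[1,\tfrac r2]$ (rather than $[0,\tfrac r2]$, as in the $\mathrm{K3}$ statement) is used.

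For $g\geqslant 2$ I would reproduce the two-step elementary estimate of the $\mathrm{K3}$ case. From $r^2(2g-2)<b^2t$ one obtains
\[
\sqrt{\frac{2g-2}{t}}<\frac{b}{r},
\]
and combining this with the assumption $\frac{b}{r}\leqslant\frac12$ from \eqref{eq:Kummerk>2g-2b} yields $\sqrt{\frac{2g-2}{t}}<\frac12$, i.e.\ $\sqrt{\frac{t}{2g-2}}>2$. Feeding the first inequality into the expression for $k$ then gives
\[
k=\frac{bt}{r}-g+1>\sqrt{t(2g-2)}-g+1=(2g-2)\left(\sqrt{\frac{t}{2g-2}}-\frac12\right)>(2g-2)\cdot\frac32>2g-2,
\]
where the penultimate step uses $\sqrt{\frac{t}{2g-2}}>2$ and the final one uses $2g-2>0$.

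I do not expect any genuine obstacle: the whole argument is a short chain of elementary inequalities, identical in form to Lemma~\ref{lemma:K3k>2g-2}. The only two points deserving care are the identity $q(x)=2g-2$ on the torus, which rests solely on $K_A=0$ via adjunction, and the correct handling of the $g=1$ boundary case, which is precisely why the hypotheses here exclude $b=0$ and $g=0$.
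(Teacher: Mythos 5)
Your proof is correct and takes essentially the same route as the paper's: it handles $g=1$ directly, and for $g\geqslant 2$ derives $\sqrt{\frac{2g-2}{t}}<\frac{b}{r}$ from $q(\beta)<0$, combines this with $\frac{b}{r}\leqslant\frac{1}{2}$ to get $\sqrt{\frac{t}{2g-2}}>2$, and chains the inequalities exactly as in Lemma~\ref{lemma:K3k>2g-2}. As a bonus, your display $k>\sqrt{t(2g-2)}-g+1=(2g-2)\left(\sqrt{\frac{t}{2g-2}}-\frac{1}{2}\right)$ states correctly the intermediate quantity that the paper's own display misprints as $\sqrt{\frac{2g-2}{t}}-g+1$.
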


\begin{proof}
If $g=1,$  we are done since $\frac{br}{t}>0.$ So assume that $g \geqslant 2.$ By \eqref{eq:Kummerk>2g-2bq>0} we obtain
\begin{equation}\label{eq:Kummerfrac<b/r}
\sqrt{\frac{2g-2}{t}} < \frac{b}{r}.
\end{equation}
\noindent So by \eqref{eq:Kummerk>2g-2b} we get
\begin{equation}\label{eq:Kummersqrt>2}
\sqrt{\frac{t}{2g-2}} > 2. 
\end{equation}
\noindent By \eqref{eq:Kummerfrac<b/r} and \eqref{eq:Kummersqrt>2} we have the following inequalities
$$
k=\frac{bt}{r}-g+1>\sqrt{\frac{2g-2}{t}}-g+1=(2g-2)\left(\sqrt{\frac{t}{2g-2}} -\frac{1}{2}  \right)>2g-2.
$$

\end{proof}

\begin{Lemma}\label{lemma:Kummerminimalcurves}
Let $M=\mathrm{Kum}^n(A)$ for a complex two-dimensional  torus $A$ such that~\mbox{$\mathrm{NS}(A)=\langle x \rangle$} and let~\mbox{$C \subset S$} be a smooth curve of genus $g \geqslant 1$ which corresponds to the class $x \in H^2(A, \mathbb{Z}).$ Let $t=2(n+1).$ Let $e \in  H^{1,1}(M,\mathbb{Z})$ be a class of half of the exceptional divisor on~$M.$ Assume that $\beta=rx-be \in H^{1,1}(M,\mathbb{Z})$ is a primitive class such that $r \in \mathbb{N},$ $b \in [1,\frac{r}{2}]$ is an integer number, and $q(\beta)<0.$ Then there is a rational curve $R \subset M$ corresponds to the class $\beta.$ Moreover, the curve $R$ varies in the variety $\mathcal{Z} \subset M$ which is a rational quotient such that dimension of  its fibres  is  equal to codimension of $\mathcal{Z}$ in $M.$ 
\end{Lemma}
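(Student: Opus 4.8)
The plan is to imitate the proof of Lemma~\ref{lemma:K3minimalcurves} step by step, replacing the Hilbert scheme by the generalized Kummer variety and the Beauville--Mukai system on the $\mathrm{K3}$ surface by its counterpart on the torus $A$. First I would pass to the dual class
$$
\widehat{\beta}=x-\frac{b}{r}e\equiv C-\frac{bt}{r}\,\widehat{e}\in\mathrm{N}_1(M,\mathbb{Q}),
$$
put $k=\tfrac{bt}{r}-g+1$, and verify the two-sided bound $2\leqslant k\leqslant n+1$ demanded by Lemma~\ref{RHA}. The upper bound follows from $b\leqslant r/2$, which forces $\tfrac{bt}{r}\leqslant\tfrac{t}{2}=n+1$, together with $g\geqslant 1$; the lower bound $k\geqslant 2$ follows for $g\geqslant 2$ from the inequality $k>2g-2$ of Lemma~\ref{lemma:Kummerk>2g-2}, and for $g=1$ from $\tfrac{bt}{r}\geqslant 2$. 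Granting this, the gonality bound $\lfloor\tfrac{g+1}{2}\rfloor+1\leqslant k$ (again a consequence of $k>2g-2$ for $g\geqslant 2$, and of $\mathrm{gon}(C)=2$ for an elliptic curve when $g=1$) shows that the gonality of $C$ is at most $k$, so a pencil $g^1_k$ exists and Lemma~\ref{RHA} produces a rational curve $R\subset M$ in the class $\beta$.

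The substantive part is to exhibit the locus $\mathcal{Z}$ swept out by the deformations of $R$ as a rational quotient of the correct type. A general member of $\mathcal{Z}$ consists of $k$ points cut on a genus-$g$ curve $D$ in the class $x$ by a divisor of a degree-$k$ line bundle $L$, together with $n+1-k$ fixed points of $A$, all constrained to sum to zero. Because $k>2g-2$, the complete linear system $|L|=\mathbb{P}^{k-g}$ is the fibre of the rational quotient, and moving a divisor inside $|L|$ leaves the sum of the $k$ points unchanged, since the summation $\mathrm{Sym}^k(D)\to A$ factors through the abelian variety $\mathrm{Pic}^k(D)$, which contains no rational curves; this is what keeps $R$ inside $\mathrm{Kum}^n(A)$. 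The relevant base is the relative compactified Jacobian $\mathcal{J}$ of the $g$-dimensional family of genus-$g$ curves in the class $x$, the abelian analogue of the Beauville--Mukai system (cf.~\cite[Section 2.3]{Sawon}); it is holomorphic symplectic of dimension $2g$ and of abelian type. I would then realize the completion of $\mathcal{Z}$ as the fibre over $0\in A$ of the summation map $\mathcal{P}\times\mathrm{Sym}^{n+1-k}(A)\to A$, where $\mathcal{P}\to\mathcal{J}$ is the $\mathbb{P}^{k-g}$-bundle with fibre $|L|$. This replacement of a product by a fibre of the summation map is precisely where the abelian case departs from the $\mathrm{K3}$ case. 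The dimension count then yields $\dim\mathcal{Z}=2n-k+g$, so that the codimension of $\mathcal{Z}$ in $M$ equals $k-g$, matching the dimension of the rational-quotient fibre $\mathbb{P}^{k-g}$.

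The main obstacle is to verify the hypothesis of the minimality criterion of Remark~\ref{remark:criteriaminimality} (combined with Theorem~\ref{th:analiticallyequiv}), namely that the base of the rational quotient is not uniruled, hence not unirational. Here the abelian nature of the construction is decisive: the base fibres over $\mathrm{Sym}^{n+1-k}(A)$ with fibres the Albanese fibres of $\mathcal{J}$, which are generalized Kummer varieties carrying a holomorphic symplectic form; as both $\mathrm{Sym}^{n+1-k}(A)$ and these fibres are non-uniruled, so is the base. This establishes the minimality of $R$ and hence the lemma.

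Finally I would flag one genuine edge case. For $g=1$ the bound $k\geqslant 2$ fails exactly when $\tfrac{bt}{r}=1$, and primitivity of $\beta$ then forces $b=1$, $r=t$, i.e. $\beta=tx-e$ with $q(x)=0$. A direct computation gives $q(\widehat{\beta})=-\tfrac{1}{2(n+1)}$ and $\delta(\beta)=\pm1$, so $\beta$ lies in the monodromy orbit of the class $e$ of half of the exceptional divisor; this orbit is treated separately as the base case in the proof of Theorem~\ref{TheoremKummer}, and so may be excluded here.
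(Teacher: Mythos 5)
Your proposal is correct and follows essentially the same route as the paper's own proof: pass to the dual class, set $k=\frac{bt}{r}-g+1$, combine Lemma~\ref{lemma:Kummerk>2g-2} with the Griffiths--Harris gonality bound to produce a pencil $g^1_k$, apply Lemma~\ref{RHA} to get the rational curve $R$, and then exhibit the locus swept out by its deformations as a $\mathbb{P}^{k-g}$-bundle over a zero-sum piece of the relative Jacobian of curves in the class $x$, with the identical dimension count $\dim\mathcal{Z}=2n-k+g$, so that the fibre dimension $k-g$ equals the codimension.

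Three differences are worth recording. (i) Your realization of the completion of $\mathcal{Z}$ as the fibre over $0\in A$ of the summation map on $\mathcal{P}\times\mathrm{Sym}^{n+1-k}(A)$ is in fact more precise than the paper's product $\mathcal{P}_{\mathcal{A}}\times A/H\times\mathrm{Sym}^{n+1-k}(A)$: the zero-sum condition couples the sum of the $k$ points on the curve with the sum of the $n+1-k$ residual points, so the paper's literal product agrees with the completion of $\mathcal{Z}$ only up to a finite correspondence; this affects neither the dimension count nor the rational quotient structure. (ii) In your non-uniruledness step (which the paper defers to Corollary~\ref{cor:q>2thusmbmKummer} via Remark~\ref{remark:criteriaminimality}), note that the fibres of the summation map $\mathcal{J}\to A$ are not themselves generalized Kummer varieties once the determinant direction is left free: they have dimension $2g-2$ and fibre over the two-dimensional dual torus with generalized Kummer fibres; they remain non-uniruled, so your conclusion stands. (iii) Most substantively, your edge case exposes a genuine slip in the paper's own proof: for $g=1$ the paper deduces $k\geqslant 2$ from $\frac{bt}{r}\geqslant 1$, but $k=\frac{bt}{r}$, and the case $\frac{bt}{r}=1$ (which forces $b=1$, $r=t$, $q(x)=0$) satisfies every hypothesis of the lemma yet admits no irreducible rational curve of class $\beta=tx-e$: by the incidence analysis underlying Lemma~\ref{RHA}, such a curve would have to arise from a $g^1_k$ on an elliptic curve with $k=q(\widehat{\beta},e)=1$, which does not exist. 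Since $q(\widehat{\beta})=-\frac{1}{2(n+1)}$ and $\delta(\beta)=\pm 1$, Lemma~\ref{GritsenkoK3A} places $\beta$ in the monodromy orbit of $e$, precisely the case treated separately at the start of the proof of Theorem~\ref{TheoremKummer}, so excluding it, as you do, is the right repair. (Like the $\mathrm{K3}$ analogue, Lemma~\ref{lemma:K3minimalcurves}, the statement should also carry the hypothesis $r\mid t$, which both your argument and the paper's use through $m=t/r$.)
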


\begin{proof}
Let us consider $\widehat{\beta}=x-\frac{b}{r}e \in H_2(X,\mathbb{Q}).$ Let $m=\frac{t}{r}$ We have 
$$
\widehat{\beta} = x-\frac{mb}{t}e \equiv C-mb\hat{e}.
$$

\noindent Let us prove that there is a linear system $g^1_k$ on $C$ such that 
\begin{equation}\label{eq:equationofkA'}
k=mb-g+1=\frac{bt}{r}-g+1.
\end{equation}
\noindent By Lemma~\ref{lemma:Kummerk>2g-2} we get 
\begin{equation}\label{eq:k>2g-2minimalityKummer}
k>2g-2.
\end{equation}

If we prove that  the gonality of $C$ is less than or equal to $k,$ then by Lemma~\ref{RHA} we get that the linear system $g^1_k$ gives a rational curve $R$ on $M.$ To do this we need to use the inequality for the gonality in~\cite[Chapter~2, page 261]{grif}. In other words, we  have to prove that
$$
\floor*{\frac{g+1}{2}}+1 \leqslant  k.
$$
\noindent For $g \geqslant 2$ this immediately follows from Lemma~\ref{lemma:Kummerk>2g-2}. For~\mbox{$g=1$} it is enough to prove that $k \geqslant 2.$ Form \eqref{eq:equationofkA'} it is enough to prove that~\mbox{$\frac{bt}{r} \geqslant 1,$} which is true. This means that there is  a rational curve $R$ which corresponds to the class $\beta.$

Now let us find the MBM locus $\mathcal{Z}$ of the curve $R.$ Assume that the genus of the curve~$C$ is $g>1.$  By \eqref{eq:k>2g-2minimalityKummer} we get that~$\mathrm{Sym}^k(C)$ is a projective bundle over the Jacobian~$J(C)$ of $C.$ Also by Hirzebruch-Riemann-Roch theorem we get that 
$$
\dim H^0(A,\mathcal{O}_A(C))=g-1.
$$
\noindent By~\cite[Proposition \rom{1}.8.1]{Milne} only finitely many translations of $C$ by the elements of~$A$ are linear equivalent to $C.$ Denote this subgroup of $A$ by $H.$ This means that~\mbox{$A/H \times \mathbb{P}^{g-2}$} parametrizes the curves of genus $g$ on the surface $A.$

We have the fibration over $\mathbb{P}^{g-1}$ with the Jacobians as the fibres. In other words, we get the fibration 
$$
j \colon \mathcal{J} \to \mathbb{P}^{g-1},
$$
\noindent  such that the fibre of $j$ over a point $D \in \mathbb{P}^{g-2},$ where $D \subset A$ is a curve of genus $g,$ is equal to the Jacobian of $D.$ Let 
\begin{equation}\label{eq:Kummerminimalsymprojbundle}
\phi \colon \mathcal{P} \to \mathcal{J}
\end{equation}
\noindent  be a fibration such that the fibre over a point $J \in \mathcal{J}$ is $\mathbb{P}^{k-g}$ which gives us the fibration
$$
\mathrm{Sym}^k(D) \xrightarrow{\mathbb{P}^{k-g}} J(D)
$$
\noindent for a curve $D \in \mathbb{P}H^0(A,\mathcal{O}_A(C))^{\vee}$ of genus $g.$ Let $\mathcal{A}$ be a fibre of $\mathcal{J}$ over zero of the summation map to $A$ and let $\mathcal{P}_{\mathcal{A}}$ be a restriction of the fibration  \eqref{eq:Kummerminimalsymprojbundle} on $\mathcal{A}.$ The  completion of~$\mathcal{Z}$ is thus $\mathcal{P}_{\mathcal{A}} \times A/H \times  \mathrm{Sym}^{n+1-k}(A).$  Note that $\mathcal{A}$ is called the Debarre system and it is known that $\mathcal{A}$ is deformation equivalent to $\mathrm{Kum}^{g-2}(\widehat{A})$ (see~\cite[Theorem 3.4]{Debarre}). In other words, the desingularization of $\mathcal{Z}$ is deformation equivalent to $\mathbb{P}^{k-g}$-bundle over the product of IHSM and a two-dimensional complex torus. The dimension of $\mathcal{Z}$ is equal to 
$$
2n+2-2k+2+g+g-2-2+k-g=2n-k+g.
$$
\noindent  The dimension of the fibre of its rational quotient  is equal to $k-g$ which is the codimension of the MBM locus in $M.$

Assume that $g=1.$ Then from the exact sequence 
$$
0 \to \mathcal{O}_A \to \mathcal{O}_A(C) \to \mathcal{O}_C(C) \to 0
$$
\noindent we get that $\dim H^0(A,\mathcal{O}_A(C))$ is either $1,$ or $2.$ If it is equal to $2,$ we get that $A$ is an elliptic fibre over $\mathbb{P}^1,$ which is impossible. So we get that  
$$
\dim H^0(A,\mathcal{O}_A(C))=1.
$$
\noindent This means that the completion of~$\mathcal{Z}$ is a product of a fibre over zero of the summation map $\mathrm{Sym}^k(C) \to C$ and $\mathrm{Sym}^{n+1-k}(A).$ In other words, the completion of~$\mathcal{Z}$ is isomorphic to 
$$
\mathbb{P}^{k-1} \times \mathrm{Sym}^{n+1-k}(A).
$$
\noindent Note that in this case $C$ is a subgroup in $A.$ So one does not need to take into account the translations of $C,$ since these parameters are in the component $\mathrm{Sym}^{n+1-k}(A)$ in this case.     The dimension of $\mathcal{Z}$ is equal  to $2n-k+1$ and the dimension of the fibre of its rational quotient is equal to the codimension of $\mathcal{Z}$ in $M.$ This completes the proof.

\end{proof}

Now we are ready to prove Theorem \ref{TheoremKummer}.

\begin{proof}[Proof of Theorem \ref{TheoremKummer}.]
Let $\alpha$ be a primitive element in $H^2(X,\mathbb{Z})$ such that~\mbox{$q(\alpha)<0.$} The proof of this theorem is similar to the proof of Theorem~\ref{TheoremK3}. If MBM classes~$\alpha$ and~$e$ are in the same monodromy orbit, then there is a deformation of $X$ preserving~$e$, which is isomorphic to $\mathrm{Kum}^n(A)$ with $\mathrm{Pic}(\mathrm{Kum}^n(A))\,=\,\langle e \rangle,$ i.e.~$A$ does not have curves. Note that $e$ is MBM, because it corresponds to a fibre $R$ of the ruling over the diagonal on $\mathrm{Sym}^{n+1}(A),$ intersecting with $\mathrm{Kum}^n(A).$  This ruling is MBM locus of $R.$  Thus, by Theorem \ref{Torelli} there is a deformation of~$X$ preserving~$e,$ which is  isomorphic to $\mathrm{Kum}^n(A)$ with 
$$
\mathrm{Pic}(\mathrm{Kum}^n(A))\,=\,\langle  e \rangle,
$$
\noindent i.e. $A$ does not have curves. We have
$$
q(\widehat{e})=-\frac{1}{2(n+1)}=q(\widehat{\alpha}).
$$

\noindent So assume that $\alpha$ and~$e$ are not in the same monodromy orbit.  We divide the proof into several steps.

\textbf{Step 1.} \emph{In this step we recall that there is a deformation $Y$ of $X$ within~$\textup{Teich}_{\alpha}$ with Picard lattice $\mathrm{Pic}(Y)=\langle \alpha, e \rangle$  such that $Y$ is a Kummer variety.}  This immediately follows from  Corollary~\ref{Corollary:deformation}. So $Y=\mathrm{Kum}^n(A')$ for a complex two-dimensional torus $A'$ with 
$$
\mathrm{NS}(A')=\langle y \rangle=e^{\perp} \; \text{in} \; \langle \alpha,e \rangle.
$$

\textbf{Step 2.} \emph{In this step we deform $Y$ to $Z$ in $\textup{Teich}_{\alpha}$ such that 
$$
\mathrm{Pic}(Z)=\langle \alpha, \gamma^{-1}(e) \rangle
$$
\noindent and $\gamma \in \mathrm{Mon}(X)$  is chosen to normalize $\gamma(\alpha)$ in a certain way.} So let us prove the following lemma.

\begin{Lemma}\label{lemmaAVA}

Let $Y$ be the Kummer variety $\mathrm{Kum}^n(A')$ of a complex two-dimensional torus. Assume that $\mathrm{Pic}(Y)=\langle \alpha, e\rangle,$  where   
$$
\alpha=ay+ce \in H^{1,1}(Y,\mathbb{Z})
$$
\noindent is a primitive class with~\mbox{$q(\alpha)<0$}   for~\mbox{$y \in \mathrm{NS}(A').$}  Then there is~$\gamma \in \textup{Mon}(Y),$ such that $\gamma(\alpha)=rx-be$  with primitive $x \in H^2(A',\mathbb{Z}),$ and with integer numbers $b \in [1,\frac{r}{2}]$ and~\mbox{$r=\gcd(a,t),$}  where~\mbox{$t=2(n+1).$}

\end{Lemma}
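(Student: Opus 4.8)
The plan is to follow almost verbatim the strategy of Lemma~\ref{lemmaAVK3}, replacing the even unimodular K3 lattice by the second cohomology of the torus. The key structural input is that, by Example~\ref{ex:Kummerlattice}, we have $H^2(A',\mathbb{Z})\cong U^{\oplus 3}$, which is even and unimodular; in particular every even integer is realised as the square of a primitive vector of $H^2(A',\mathbb{Z})$, and every primitive vector has divisibility $1$ there. First I would set $r=\gcd(a,t)$ and write $a=rm$, $t=rs$. Choosing a primitive $y'\in H^2(A',\mathbb{Z})$ with $q(y')=m^2q(y)$, I would introduce $\widetilde{\alpha}=ry'+ce$ and check that $\alpha$ and $\widetilde{\alpha}$ share the two monodromy invariants. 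For the square this is immediate, since $q(\alpha)=a^2q(y)-c^2t=(rm)^2q(y)-c^2t=r^2q(y')-c^2t=q(\widetilde{\alpha})$.

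For the discriminant invariant I would use that $\alpha$ is primitive, i.e. $\gcd(a,c)=1$. Pairing $\alpha=ay+ce$ against $H^2(A',\mathbb{Z})\oplus\mathbb{Z}e$ and using $q(y,H^2(A',\mathbb{Z}))=\mathbb{Z}$ (as $y$ is primitive in the unimodular lattice) together with $q(e,e)=-t$ gives $q(\alpha,L)=a\mathbb{Z}+ct\mathbb{Z}$, so $d(\alpha)=\gcd(a,ct)=\gcd(a,t)=r$, the last equality using $\gcd(a,c)=1$. In the discriminant group $L^{\vee}/L\cong\mathbb{Z}/t\mathbb{Z}$ this yields $\delta(\alpha)=cs\bmod t$; the identical computation gives $d(\widetilde{\alpha})=r$ and $\delta(\widetilde{\alpha})=cs\bmod t$. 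Since $\alpha$ and $\widetilde{\alpha}$ have equal square and equal $\delta$, Lemma~\ref{GritsenkoK3A} provides $\widetilde{\gamma}\in\mathrm{Mon}(Y)$ with $\widetilde{\gamma}(\alpha)=\widetilde{\alpha}$.

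Next I would normalise the $e$-coefficient into the interval $[-\tfrac{r}{2},\tfrac{r}{2}]$. For $l\in\mathbb{Z}$ consider $\doublewidetilde{\alpha}=rx+(c-lr)e$. Because $(c-lr)s\equiv cs\bmod t$, the class $\doublewidetilde{\alpha}$ has the same $\delta$ as $\widetilde{\alpha}$ for every $l$, while comparing squares shows $q(\widetilde{\alpha})=q(\doublewidetilde{\alpha})$ precisely when $q(x)=q(y')+tl^2-2cls$. As this target value is even, the unimodularity of $U^{\oplus 3}$ lets me pick a primitive $x\in H^2(A',\mathbb{Z})$ realising it. Choosing $l$ so that $b'=c-lr\in[-\tfrac{r}{2},\tfrac{r}{2}]$ and setting $b=|b'|$, a further application of Lemma~\ref{GritsenkoK3A} (absorbing the sign of $b'$ into the reflection $\mathrm{R}_e$ exactly as in that lemma) produces $\gamma\in\mathrm{Mon}(Y)$ with $\gamma(\alpha)=rx-be$ and $x$ primitive.

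The step I expect to be the only genuinely Kummer-specific point is the sharpened range $b\in[1,\tfrac{r}{2}]$, as opposed to $b\in[0,\tfrac{r}{2}]$ in the K3 case. Here I would argue that $b=0$ forces $r\mid c$; but $\gcd(a,c)=1$ together with $r\mid a$ gives $\gcd(r,c)=1$, so $r\nmid c$ whenever $r\geq 2$, and then the representative $b'=c-lr$ cannot vanish, i.e. $b\geq 1$. The remaining possibility $r=1$ forces $b=0$ and $\gamma(\alpha)=x\in H^2(A',\mathbb{Z})$, a pure torus class of negative square; since the torus carries no rational curves, such a class is never MBM, which is exactly why the range in Theorem~\ref{TheoremKummer} begins at $b=1$ (in contrast to the K3 case, where $b=0$ with $q(x)=-2$ corresponds to a genuine $(-2)$-curve). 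Thus the normalisation $\gamma(\alpha)=rx-be$ with $b\in[1,\tfrac{r}{2}]$ holds in all cases relevant to the MBM classification.
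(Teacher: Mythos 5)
Your first three paragraphs follow the same route as the paper, whose proof of this lemma is essentially a single sentence deferring to Lemma~\ref{lemmaAVK3}: you transcribe that argument to the lattice $U^{\oplus 3}\oplus\langle -t\rangle$, and the computations (the invariants $q(\alpha)=a^2q(y)-c^2t$, $d(\alpha)=\gcd(a,ct)=r$, $\delta(\alpha)=cs\bmod t$, the shift $c\mapsto c-lr$ with $q(x)=q(y')+tl^2-2cls$ automatically even, and the two applications of Lemma~\ref{GritsenkoK3A} with $\mathrm{R}_e$ absorbing the sign) all check out.

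Where you genuinely differ from the paper is the Kummer-specific step $b\geqslant 1$, and your version is the sound one. The paper excludes $b=0$ by asserting that $\alpha$ would then be monodromy-equivalent to a class in $H^2(A',\mathbb{Z})$, and that this "contradicts" $q(\alpha)<0$ because smooth curves on a torus have non-negative square. That is not a contradiction: classes of negative square in $\mathrm{NS}$ of a two-dimensional torus do exist (e.g. $\mathrm{NS}(A')=\mathbb{Z}y$ with $q(y)<0$ for a non-projective $A'$, which then simply carries no curves), and for such $\alpha=y$ (so $a=\pm1$, $c=0$, $r=1$) every hypothesis of the lemma holds while the conclusion demands an integer $b\in[1,\tfrac{1}{2}]$, which is impossible. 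So the lemma as literally stated fails when $r=1$, i.e. when $\delta(\alpha)=0$, and the paper's argument does not repair this: negative square is only contradicted by $\alpha$ being an effective curve class (equivalently by MBM-ness, which is not among the hypotheses). Your proposal handles this exactly right: for $r\geqslant 2$ the coprimality argument ($\gcd(r,c)=1$, hence $b'=c-lr\neq 0$) proves $b\geqslant 1$ cleanly, and for $r=1$ you correctly identify the normalized form as a pure torus class of negative square, never MBM — which is the paper's own reasoning in Lemma~\ref{lemma:Athenq>0} — while in the numerical direction of Theorem~\ref{TheoremKummer} the hypothesis $\pm\delta(\alpha)=b$ with $b\in[1,n+1]$ already forces $r\geqslant 2$. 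So your proof has no gap; rather, it makes explicit a restriction (namely $\delta(\alpha)\neq 0$, or an MBM hypothesis) that the lemma's statement and the paper's one-line exclusion of $b=0$ both gloss over.
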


\begin{proof}
The proof of this lemma is similar to the proof of Lemma~\ref{lemmaAVK3}. However, in this case we get $b \in [1,\frac{t}{2}],$ because if $b=0,$ then $\alpha$ lies in the same monodromy orbit as an element of $H^2(A',\mathbb{Z})$ for some complex two-dimensional  torus $A'.$ But  smooth curves on $A'$ have non-negative squares. This contradicts with our assumptions that~$\alpha$ is of negative square.

\end{proof}

So consider the point in $\mathrm{Teich}_{\alpha}$ which corresponds to the variety $Z$ such that 
$$
\mathrm{Pic}(Z)=\langle \alpha, \gamma^{-1}(e) \rangle,
$$
\noindent  where $\gamma \in \mathrm{Mon}(Y)$ is as in Lemma~\ref{lemmaAVA}.  The variety $Z$ is bimeromorphic to the Kummer variety $\mathrm{Kum}^n(A)$ with a complex two-dimensional torus $A$ with 
$$
\mathrm{NS}(A)=\langle x \rangle,
$$
\noindent where $x \in \mathrm{NS}(A')$ as in Lemma~\ref{lemmaAVA}. Note that in any monodromy orbit there is only one element which satisfies the conditions of Lemma \ref{lemmaAVA}.

\textbf{Step 3.} \emph{In this step we give necessary condition for $\alpha$ to be MBM.}

\begin{Lemma}\label{lemma:Athenq>0}
Let $Y$ be an IHSM of Kummer type of dimension $2n \geqslant 4.$ Let 
$$
\alpha \in H^{1,1}(Y,\mathbb{Z})
$$
\noindent  be a primitive class such that $\alpha$ is MBM. Assume that $Y$ is deformation equivalent to the variety $Z$ such that $\alpha \in H^{1,1}(Z,\mathbb{Z})$ and there is $\gamma \in \mathrm{Mon}(Y)$ such that $\gamma(Z)$ is bimeromorphic to~\mbox{$\mathrm{Kum}^n(A)$} for some complex two-dimensional  torus $A.$ Assume that $\gamma(\alpha)=rx-be,$ where $\mathrm{NS}(A)=\langle x \rangle,$  $b \in [1,\frac{r}{2}]$ is an integer number and $r$ divides $t=2(n+1).$ Then~\mbox{$q(x) \geqslant 0.$}

\end{Lemma}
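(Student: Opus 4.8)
The plan is to argue by contraposition, exactly parallel to the $\mathrm{K3}$ case treated in Lemma~\ref{lemma:K3thenq>-2}, the only essential difference being the numerical threshold, which comes from a genus bound peculiar to tori. I would assume $q(x)<0$ and show that then the two-dimensional torus $A$ carries no curves at all; consequently the only rational curves on $\mathrm{Kum}^n(A)$ are fibres of the exceptional divisor of the Hilbert--Chow morphism, of class proportional to $\widehat{e}$, and this forces $\alpha$ not to be MBM.

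First I would establish the key geometric input: a complex two-dimensional torus $A$ with $\mathrm{NS}(A)=\langle x\rangle$ and $q(x)<0$ has no curves. Indeed, any irreducible curve $D\subset A$ has class $mx$ with $m\neq 0$, since $\mathrm{NS}(A)=\mathbb{Z}x$ and $q$ restricts to the intersection form on $\mathrm{NS}(A)$. A torus carries no rational curves (a map $\mathbb{P}^1\to A$ lifts to the universal cover $\mathbb{C}^2$ and is hence constant), so the arithmetic genus satisfies $p_a(D)\geq 1$. As the canonical class of $A$ is trivial, adjunction gives $D^2=2p_a(D)-2\geq 0$. But $D^2=m^2q(x)<0$, a contradiction; so no irreducible, and therefore no, curve exists on $A$. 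This is precisely the point where the Kummer bound $q(x)\geq 0$ replaces the $\mathrm{K3}$ bound $q(x)\geq -2$: on a $\mathrm{K3}$ one only has $p_a\geq 0$ and hence $D^2\geq -2$, whereas the absence of rational curves on a torus upgrades this to $D^2\geq 0$.

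Next I would transfer this to $\mathrm{Kum}^n(A)$. Since $A$ has no curves, under the isomorphism $\mathrm{N}_1(\mathrm{Kum}^n(A))\simeq \mathrm{N}_1(A)\oplus\mathbb{Z}\widehat{e}$ every effective curve class is supported on the fibres of the Hilbert--Chow morphism, i.e. is a multiple of $\widehat{e}$. Now suppose, for contradiction, that $\alpha$ is MBM. The MBM property is invariant under $\mathrm{Mon}(Y)$ (by Definition~\ref{MBM1401} it is defined up to the action of the monodromy group), so $\gamma(\alpha)=rx-be$ is MBM on $\gamma(Z)$, which is bimeromorphic to $\mathrm{Kum}^n(A)$. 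By Remark~\ref{remark:vanishingofcurve} the dual class $\widehat{\gamma(\alpha)}=x-\tfrac{b}{r}e$ must then be represented, up to sign, by a possibly reducible curve on $\mathrm{Kum}^n(A)$. But $\widehat{\gamma(\alpha)}$ has a nonzero component along $\mathrm{N}_1(A)$, namely the curve class $x$ (here $r\geq 1$ and $x$ is primitive, so $rx\neq 0$), and is therefore not proportional to $\widehat{e}$. This contradicts the previous paragraph, so $\alpha$ cannot be MBM, and the contrapositive yields $q(x)\geq 0$.

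The bulk of the argument is routine once the torus-specific genus estimate is in place; the only real point to get right, and the genuine difference from the $\mathrm{K3}$ setting, is the nonexistence of rational curves on $A$, which sharpens the adjunction inequality from $D^2\geq -2$ to $D^2\geq 0$ and thereby produces the bound $q(x)\geq 0$ asserted in the statement.
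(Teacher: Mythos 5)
Your proof is correct and follows essentially the same route as the paper's: the paper also argues that $q(x)<0$ forces the torus $A$ to have no curves, so that the only rational curves on $\mathrm{Kum}^n(A)$ are of class proportional to $\widehat{e}$, whence $\alpha$ cannot be MBM by Remark~\ref{remark:vanishingofcurve}. The only difference is that you spell out the justification (no rational curves on a torus plus adjunction with $K_A=0$ giving $D^2\geqslant 0$) that the paper leaves implicit.
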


\begin{proof}
 If~\mbox{$q(x)<0,$} then the  complex two-dimensional  torus $A$ does not have any curves. Therefore, by Remark~\ref{remark:vanishingofcurve} the class $\alpha$ is not an MBM class, because the only rational curves on the Kummer variety are of class $\hat{e}.$ If $q(x) \geqslant 0,$ we use the uniform representation of monodromy orbit from Step~2, namely, for every~\mbox{$\alpha \in H^2(X,\mathbb{Z})$} there is $\gamma \in \text{Mon}(X)$ such that 
$$
\gamma(\alpha)=rx-be,
$$
\noindent where $r$ divides~$t,$ the integer number $b$ belongs to the segment \mbox{$[1,\frac{r}{2}]$} and~\mbox{$\gcd(r,b)=1.$}  Thus, we can deform~$X$ to $Z$ such that  $\gamma(Z)$ is bimeromorphic to $\mathrm{Kum}^n(S)$ with  
$$
\mathrm{NS}(A)=\langle x \rangle.
$$ 

\end{proof}

\textbf{Step 4.}  \emph{In this step we prove that the necessary condition on $\alpha$ to be MBM given in Step 3 is also sufficient.}

\begin{Lemma}\label{lemma:Aq>0}
Let $Y$ be an IHSM of Kummer type of dimension $2n \geqslant 4.$ Let 
$$
\alpha \in H^{1,1}(Y,\mathbb{Z})
$$
\noindent  be a primitive class.  Assume that~$Y$ is deformation equivalent to the variety $Z$ such that~\mbox{$\alpha \in H^{1,1}(Z,\mathbb{Z})$} and there is $\gamma \in \mathrm{Mon}(Y)$ such that $\gamma(Z)$ is bimeromorphic to~\mbox{$\mathrm{Kum}^n(A)$} for some complex two-dimensional  torus $A.$ Assume that
$$
\gamma(\alpha)=rx-be,
$$
\noindent where $\mathrm{NS}(A)=\langle x \rangle,$  $b \in [1,\frac{r}{2}]$ is an integer number and $r$ divides $t=2(n+1).$ If~$q(x) \geqslant 0,$ then the class of $\widehat{\gamma(\alpha)} \in H_2(Z,\mathbb{Q})$ on $Z$ is represented by a smooth rational curve.

\end{Lemma}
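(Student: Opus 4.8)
The plan is to reduce the statement to Lemma~\ref{lemma:Kummerminimalcurves}, following verbatim the structure of the $\mathrm{K3}$-type analogue Lemma~\ref{lemma:K3q>-2}; the only genuine difference is that on a complex two-dimensional torus the smallest self-intersection of a smooth curve is $0$ rather than $-2$, which is precisely why the threshold becomes $q(x)\geqslant 0$ instead of $q(x)\geqslant -2$. First I would identify the dual class explicitly. Since $\mathrm{NS}(A)=\langle x\rangle$ is primitive in the unimodular lattice $H^2(A,\mathbb{Z})\cong U^{\oplus 3}$, the pairing of $x$ against $H^2(A,\mathbb{Z})$ is all of $\mathbb{Z}$; together with $r\mid t$ and $\gcd(r,b)=1$ (which holds because $\gamma(\alpha)$ is primitive), this yields $d(\gamma(\alpha))=r$, and hence
$$
\widehat{\gamma(\alpha)}=\frac{\gamma(\alpha)}{d(\gamma(\alpha))}=x-\frac{b}{r}e=x-\frac{bt}{r}\widehat{e},
$$
exactly as in the $\mathrm{K3}$ case.

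Next I would realise $x$ geometrically. Since $K_A$ is trivial, adjunction gives $q(x)=2g-2$ for a smooth curve $C\subset A$ in the class $x$, so the hypothesis $q(x)\geqslant 0$ says precisely that $x$ is represented by a smooth curve of genus $g=\tfrac{q(x)}{2}+1\geqslant 1$ (here $q(x)$ is even, as $U^{\oplus 3}$ is an even lattice). This is the sole point at which the torus, rather than the $\mathrm{K3}$, hypothesis intervenes, and it both forces the threshold $q(x)\geqslant 0$ and excludes $b=0$: were $b=0$, the class $\beta=\gamma(\alpha)=rx$ would satisfy $q(\beta)=r^2q(x)\geqslant 0$, contradicting the negativity of the class being classified.

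With such a $C$ in hand, the class $\beta=\gamma(\alpha)=rx-be$ is primitive, satisfies $b\in[1,\tfrac r2]$, and has $q(\beta)=q(\alpha)<0$ (the standing negativity carried over from Lemma~\ref{lemmaAVA}); thus Lemma~\ref{lemma:Kummerminimalcurves} applies and produces a smooth rational curve $R\subset\mathrm{Kum}^n(A)$ in the class $\widehat{\gamma(\alpha)}$, built from a linear system $g^1_k$ on $C$ via Lemma~\ref{RHA}. Transporting $R$ back along the bimeromorphic identification $\gamma(Z)\dashrightarrow\mathrm{Kum}^n(A)$ and then along the monodromy operator $\gamma$ gives the desired smooth rational curve on $Z$ representing $\widehat{\gamma(\alpha)}$. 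The main obstacle is not a single deep input but the bookkeeping of this reduction: I must check that every hypothesis of Lemma~\ref{lemma:Kummerminimalcurves} holds literally—most delicately that $q(\beta)<0$ and $g\geqslant 1$ are simultaneously in force—and justify in a line that the bimeromorphic map, being an isomorphism only in codimension one, does not contract $\widehat{\gamma(\alpha)}$ and so transports the curve faithfully.
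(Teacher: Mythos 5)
Your proposal is correct and follows essentially the same route as the paper: compute $\widehat{\gamma(\alpha)}=x-\tfrac{b}{r}e$ and reduce everything to Lemma~\ref{lemma:Kummerminimalcurves}, which supplies the smooth rational curve via a $g^1_k$ on a curve $C\subset A$ and Lemma~\ref{RHA}. The extra bookkeeping you add --- the divisibility computation $d(\gamma(\alpha))=r$, the adjunction argument giving $g=\tfrac{q(x)}{2}+1\geqslant 1$, and the observation that the negativity $q(\alpha)<0$ (present in the $\mathrm{K3}$ analogue Lemma~\ref{lemma:K3q>-2} but omitted from this statement) must be carried over from Lemma~\ref{lemmaAVA} --- only makes explicit what the paper's own proof leaves implicit.
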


\begin{proof}
Consider the dual class $\widehat{\gamma(\alpha)} \in H_2(Z,\mathbb{Q})$ of $\gamma(\alpha) \in H^2(Z,\mathbb{Z}).$ Note that 
$$
\widehat{\gamma(\alpha)}=\frac{\gamma(\alpha)}{d(\gamma(\alpha))}=x-\frac{b}{r}e=x-\frac{bt}{r}\widehat{e}.
$$ Assume that $\gamma(Z)$ is bimeromorphic to $\mathrm{Kum}^n(A)$ with $\mathrm{NS}(A)=\langle x \rangle$ and $q(x) \geqslant 0$ such that 
\begin{equation}\label{eqA:b<r/2}
\gamma(\alpha)=rx-be, \;\; r \;\; \text{divides} \;\; t, \;\;  b \in [1,\frac{r}{2}] \;\; \text{is integer and} \;\; \gcd(r,b)=1.
\end{equation}

\noindent So by Lemma~\ref{lemma:Kummerminimalcurves} we get the desired, i.e. we obtain a smooth rational curve on~$Z.$

\end{proof}

\begin{Cor}\label{cor:q>2thusmbmKummer}
Let $Y$ be an IHSM of Kummer type of dimension $2n \geqslant 4.$ Let 
$$
\alpha \in H^{1,1}(Y,\mathbb{Z})
$$
\noindent  be a primitive class.  Assume that~$Y$ is deformation equivalent to the variety $Z$ such that $\alpha \in H^{1,1}(Z,\mathbb{Z})$ and there is $\gamma \in \mathrm{Mon}(Y)$ such that $\gamma(Z)$ is bimeromorphic to~\mbox{$\mathrm{Kum}^n(A)$} for some complex two-dimensional  torus $A.$ Assume that 
$$
\gamma(\alpha)=rx-be,
$$
\noindent  where $\mathrm{NS}(A)=\langle x \rangle,$  $b \in [1,\frac{r}{2}]$ is an integer number and $r$ divides $t=2(n+1).$ If~$q(x) \geqslant 0,$ then the class $\alpha$ is MBM.

\end{Cor}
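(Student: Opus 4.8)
The plan is to assemble the pieces proved in the preceding lemmas, in exact parallel with the $\mathrm{K3}$-type Corollary~\ref{cor:q>2thusmbmK3}. To show that $\alpha$ is MBM it suffices, by Proposition~\ref{prop:convenientdefinitionofMBM}, to exhibit a deformation of $X$ on which the dual class $\widehat{\alpha}$ is represented, up to rescaling, by a minimal rational curve of negative Beauville--Bogomolov square. Since $\gamma \in \mathrm{Mon}(Y)$ and, by Proposition~\ref{prop:simultaneously}, the MBM property is invariant under monodromy and deformation, I may carry out the verification for the normalized representative $\gamma(\alpha) = rx - be$ on the birational model $\mathrm{Kum}^n(A)$.

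First I would invoke Lemma~\ref{lemma:Aq>0}: under the standing hypotheses $r \mid t = 2(n+1)$, $b \in [1, \tfrac{r}{2}]$ and $q(x) \geqslant 0$, the dual class $\widehat{\gamma(\alpha)} = x - \tfrac{bt}{r}\widehat{e} \in H_2(Z,\mathbb{Q})$ is represented by a smooth rational curve $R$ on $Z$. Since $\widehat{\gamma(\alpha)}$ is a positive rational multiple of $\gamma(\alpha)$, its square has the same sign as $q(\gamma(\alpha)) = q(\alpha) < 0$, so $R$ has negative square, as demanded by the definition of an MBM curve.

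The substantive point is the minimality of $R$. Here I would apply Remark~\ref{remark:criteriaminimality}, which reduces minimality to showing that the MBM locus $\mathcal{Z}$ of $R$ is the total space of a rational quotient whose general fibre has dimension equal to the codimension of $\mathcal{Z}$ in the ambient manifold, and whose base is not unirational. This is precisely what Lemma~\ref{lemma:Kummerminimalcurves} supplies: for $g \geqslant 2$ the desingularization of $\mathcal{Z}$ is a $\mathbb{P}^{k-g}$-bundle over the product of the Debarre system (deformation equivalent to $\mathrm{Kum}^{g-2}(\widehat{A})$), a two-dimensional complex torus, and $\mathrm{Sym}^{n+1-k}(A)$, with the $\mathbb{P}^{k-g}$ appearing as the fibre of the rational quotient and the fibre dimension $k-g$ matching the codimension; the case $g=1$ gives $\mathbb{P}^{k-1} \times \mathrm{Sym}^{n+1-k}(A)$ with the analogous structure. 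By Theorem~\ref{th:analiticallyequiv} the MBM locus is real-analytically invariant under deformation, so it is enough to run this computation on $\mathrm{Kum}^n(A)$ itself.

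The step I expect to be the real obstacle is confirming that the base of the fibration is not unirational, which is exactly what distinguishes a minimal curve from one sweeping out a larger family. This holds because the base carries nonzero holomorphic $2$-forms: it is built from an irreducible holomorphic symplectic factor (the Debarre system), an abelian surface, and symmetric products of $A$, none of which is uniruled. With this in hand, Theorems~\ref{th:minimaliff2n-2} and~\ref{th:criteriaminimality} ensure that $R$ deforms in precisely a $(2n-2)$-parameter family and is therefore minimal; being also of negative square, it witnesses that $\alpha$ is MBM, which completes the proof.
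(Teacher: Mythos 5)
Your proposal is correct and follows essentially the same route as the paper: Lemma~\ref{lemma:Aq>0} produces the smooth rational curve representing $\widehat{\gamma(\alpha)}$, and minimality is obtained from Remark~\ref{remark:criteriaminimality} together with the Kummer minimal-curves lemma and the deformation-invariance of MBM loci (Theorem~\ref{th:analiticallyequiv}). Two small points in your favour: you cite Lemma~\ref{lemma:Kummerminimalcurves} where the paper's proof has an evident typo (it cites the $\mathrm{K3}$ analogue Lemma~\ref{lemma:K3minimalcurves}), and you make explicit the non-unirationality of the base of the rational quotient, which the paper leaves implicit.
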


\begin{proof}
By Lemma~\ref{lemma:Aq>0} we get that the class of $\widehat{\gamma(\alpha)} \in H_2(Z,\mathbb{Q})$ on $Z$ is represented by a smooth rational curve. Since it is of negative square by definition of MBM curve it is enough to prove that the curve $R$ which represents $\widehat{\gamma(\alpha)}$ is minimal. The minimality follows from Remark~\ref{remark:criteriaminimality} and Lemma~\ref{lemma:K3minimalcurves}, since by Theorem~\ref{th:analiticallyequiv} it is enough to find MBM locus of $R$ on $\mathrm{Kum}^n(A).$

\end{proof}

\textbf{Step 5.} \emph{In this step we prove that $\alpha \in H^2(X,\mathbb{Z})$ is MBM  if and only if its rational multiple satisfies the assumption of Theorem~\ref{TheoremKummer}.}  Recall that an element~$\alpha$ is MBM if and only if the manifold $X$ can be deformed so that on the deformed variety the class $\widehat{\alpha} \in H_2(X, \mathbb{Q})$ is represented by an extremal rational curve. By Lemma~\ref{lemmaAVA} the element $\alpha$ lies in the same monodromy orbit as the element 
$$
\alpha'=rx-b'e,
$$
\noindent where $r$ divides $t,$  $b' \in [1,r]$ and $\gcd(r,b)=1.$ Let $t=rs,$ where $s \in \mathbb{N}.$ Note that by Lemma~\ref{GritsenkoK3A} we have $q(\alpha)=q(\alpha')$ and~\mbox{$\delta(\alpha)=\pm \delta(\alpha').$} By direct calculation we get that $\delta(\alpha')=-b's.$ Therefore, we obtain
$$
-\delta(\alpha) = -b's \in [1,n+1].
$$
\noindent As 
$$
q(\widehat{\alpha})=q(x-\frac{b'}{r}e)=2a-\frac{b'^2t}{r^2}=2a-\frac{b'^2s^2}{t}
$$
\noindent for $q(x)=2a,$ where $q(x)$ is even because the lattice of the second cohomologies of a complex two-dimension torus is even. By Step $3$ and $4$ we get that $\alpha$ is MBM if and only if $q(x) \geqslant 0.$ This means that
$$
0 \leqslant 2a <\frac{b^2}{2(n+1)}.
$$

\textbf{Step 6.} \emph{In this step we prove that the numerical conditions on MBM class~$\alpha$ defined by~\eqref{eq:bin0,n-1Kummer},~\eqref{eq:q(alpha)Kummer} and~\eqref{eq:-2<2a<bKummer} uniquely determine its monodromy orbit.}   Let us fix integer numbers $a$ and $b$ which satisfy~\eqref{eq:bin0,n-1Kummer},~\eqref{eq:q(alpha)Kummer} and~\eqref{eq:-2<2a<bKummer}.  Let us find all primitive $\alpha \in H^2(X,\mathbb{Z})$ such that
\begin{equation}\label{eq:deltaq(hata)Kummer}
\delta(\alpha)=\pm b \quad \text{and} \quad q(\widehat{\alpha})=2a-\frac{b^2}{2(n+1)}.
\end{equation}

 Consider $\alpha=fx+de$ on some deformation $Y$ of $X,$ such that $Y$ is bimeromorphic to $\mathrm{Kum}^n(A)$ for some complex torus  $A,$ where $f,d \in \mathbb{Z},$ $e \in H^2(X, \mathbb{Z})$ is the class of half of the exceptional divisor on $Y$ and $x \in H^2(A,\mathbb{Z}).$   Recall (by Definition~\ref{definition:divisib}) that 
$$
d(\alpha)=\gcd(f,2(n+1)).
$$
\noindent Let $f=f' d(\alpha).$  Since $\delta(\alpha)=\pm b,$ by the definition of $\delta(\alpha)$ we get that 
$$
d=\pm \frac{bd(\alpha)}{2(n+1)} + cd(\alpha)
$$
\noindent for some integer $c.$ Note that by our assumptions $d$ is also integer. We have
\begin{equation}\label{eq:alpa=b+cdKummer}
\alpha=f'd(\alpha)x+\left(\pm \frac{bd(\alpha)}{2(n+1)} + cd(\alpha)\right)e.
\end{equation}
\noindent  So by \eqref{eq:deltaq(hata)Kummer}  a primitive $\alpha \in H^2(X,\mathbb{Z})$ satisfies \eqref{eq:deltaq(hata)Kummer} if and only if it is of the form~\eqref{eq:alpa=b+cdKummer} such that 
$$
f'^2q(x)-2(n+1)c^2 \mp 2bc=2a.
$$ 
\noindent By direct computations it is not hard to see that any $\alpha$ of the form \eqref{eq:alpa=b+cdK3} lies in the same monodromy orbit as
$$
\alpha'=d(\alpha)y-\frac{bd(\alpha)}{2(n+1)}e,
$$
\noindent where $y \in H^2(X,\mathbb{Z})$ such that $q(y,e)=0$ and $q(y)=2a.$  Note that $\alpha'$ satisfies~\eqref{eq:deltaq(hata)Kummer}. It is enough to prove that all elements of the form 
$$
\alpha'=mz-\frac{bm}{2(n+1)}e,
$$
\noindent where  
$$
\frac{bm}{2(n+1)} \in \mathbb{N}, \quad \gcd\left(m,\frac{bm}{2(n+1)}\right)=1  \quad \text{and} \quad m \;\; \text{divides} \;\; 2(n+1),
$$
\noindent  which satisfy~\eqref{eq:deltaq(hata)Kummer}, lie in the same monodromy orbit. Indeed, let
$$
r=\gcd(b,2(n+1)).
$$
\noindent Let $b=b' \cdot r$ and $2(n+1)=v \cdot r.$ Then since 
$$
\frac{bm}{2(n+1)}=\frac{b' \cdot m}{v}
$$
\noindent is integer number, we get that $v$ divides $m.$ By   the condition $ \gcd(m,\frac{bm}{2(n+1)})=1$ we get that 
$$
m=v=\frac{2(n+1)}{r}.
$$

\noindent This means that all primitive $\alpha$ which satisfy \eqref{eq:deltaq(hata)Kummer} lie in the same monodromy orbit as the class 
$$
\alpha'=\frac{2(n+1)}{\gcd(b,2(n+1))}z-\frac{b}{\gcd(b,2(n+1))}e,
$$
\noindent where $q(z)=2a$  on some deformation $Y$ of $X,$ such that $Y$ is bimeromorphic to~$\mathrm{Kum}^n(A)$ for some complex torus  $A,$ where  $e \in H^2(X, \mathbb{Z})$ is the class of half of the exceptional divisor on $Y$ and $z \in H^2(A,\mathbb{Z}).$

\end{proof}

\begin{proof}[Proof of Corollary~\ref{cor:TheoremKummer}]
Assertion \ref{1'} follows from  Lemma~\ref{GritsenkoK3A}. Assertion \ref{2'} follows from Lemma~\ref{lemma:Athenq>0} and Corollary~\ref{cor:q>2thusmbmKummer}. Note that if $(a,b)=(0,1),$ then by   Lemma~\ref{GritsenkoK3A} the elements $e$ and $\alpha$ lie in the same monodromy orbits.

\end{proof}

\begin{Cor}
Let $X$ be an IHSM of Kummer type of dimension $2n \geqslant 4.$ Let
$$
\alpha \in H^2(X,\mathbb{Z})
$$
\noindent be an MBM class. Assume that $\alpha$ up to a rational multiple can be represented by
$$
2(n+1) \cdot x-be
$$
\noindent with integer number $b \in [1,n+1]$ on some deformation $Y=\mathrm{Kum}^n(A)$ of $X$ with Picard group $\mathrm{Pic}(Y)=\mathrm{NS}(A) \oplus \mathbb{Z}e,$  such that $A$ is a complex two-dimensional torus,~$\mathrm{NS}(A)=\mathbb{Z}x,$  $x^2 \geqslant 0$ and $e$ is a class of half of the exceptional divisor on $Y.$ Then
$$
g=\frac{q(x)}{2}+1 \leqslant  \lceil \frac{n+5}{4} \rceil-1.
$$
\end{Cor}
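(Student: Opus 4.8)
The plan is to mirror exactly the short computation carried out in the $\mathrm{K3}$ analogue of this corollary, replacing $t=2(n-1)$ by $t=2(n+1)$. Since $\alpha$ is MBM, by the definition of an MBM class it is a \emph{negative} class, so $q(\alpha)<0$. The quantity $q$ scales by a positive factor under a rational rescaling of $\alpha$, hence its sign is unchanged and I may compute directly with the given representative $2(n+1)\,x-be$. The only structural inputs I need are recorded in the preliminaries: the class $x\in\mathrm{NS}(A)$ is orthogonal to $e$ for the Beauville--Bogomolov form, and $q(e)=-2(n+1)$.

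First I would expand the square using these two facts. Because the cross term $q(x,e)$ vanishes, one obtains
$$
q\bigl(2(n+1)\,x-be\bigr)=4(n+1)^2\,q(x)-2(n+1)\,b^2.
$$
Imposing $q(\alpha)<0$ and dividing through by the positive number $2(n+1)$ gives the clean inequality
$$
q(x)<\frac{b^2}{2(n+1)}.
$$

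Next I would translate this into a statement about the genus. Since $A$ is a complex two-dimensional torus its canonical class is trivial, so adjunction yields $x^2=2g-2$, i.e. $q(x)=2g-2$, which is consistent with the definition $g=\tfrac{q(x)}{2}+1$ in the statement. Substituting, the inequality above reads $g<\frac{b^2}{4(n+1)}+1$. Finally I would use the hypothesis $b\in[1,n+1]$, so that $b^2\leqslant(n+1)^2$ and hence $\frac{b^2}{4(n+1)}\leqslant\frac{n+1}{4}$, which forces
$$
g<\frac{n+1}{4}+1=\frac{n+5}{4}.
$$
As $g$ is an integer, the strict inequality $g<\frac{n+5}{4}$ is equivalent to $g\leqslant\lceil\frac{n+5}{4}\rceil-1$, which is the desired bound.

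There is essentially no genuine obstacle here: the argument is a one-line estimate once the square is expanded. The only two points worth stating carefully are the vanishing of the cross term $q(x,e)$ together with the value $q(e)=-2(n+1)$ (so that the quadratic form evaluates correctly), and the elementary but slightly delicate passage from the strict bound $g<\frac{n+5}{4}$ to the ceiling expression, which relies on $g$ being an integer and holds uniformly whether or not $\frac{n+5}{4}$ is itself an integer.
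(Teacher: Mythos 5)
Your proposal is correct and follows essentially the same route as the paper's own proof: expand $q\bigl(2(n+1)x-be\bigr)=4(n+1)^2 q(x)-2(n+1)b^2<0$ using $q(x,e)=0$ and $q(e)=-2(n+1)$, deduce $g<\frac{b^2}{4(n+1)}+1\leqslant\frac{n+5}{4}$ from $b\leqslant n+1$, and pass to $g\leqslant\lceil\frac{n+5}{4}\rceil-1$ by integrality of $g$. Your write-up is in fact slightly more careful than the paper's, since you make explicit the vanishing of the cross term and the case analysis behind the ceiling step.
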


\begin{proof}
Since $\alpha$ is MBM we get that $q(\alpha)<0.$ So we have
$$
q(2(n+1) \cdot x-be)=4(n+1)^2x^2-2(n+1)b^2<0.
$$
\noindent Therefore, we get $g<\frac{b^2}{4(n+1)}+1 \leqslant \frac{n+5}{4}.$ Thus, we obtain the inequality 
$$
g \leqslant  \lceil \frac{n+5}{4} \rceil - 1 .
$$ 


\end{proof}

\begin{Cor}[\textbf{of the proof}]\label{iffA}
Let $X$ be an IHSM of Kummer type of dimension~\mbox{$2n \geqslant 4.$} Let $\alpha \in H^2(X,\mathbb{Z})$ and $\widehat{\alpha}\,=\,x-\frac{b}{2(n+1)}e$ be its dual with~\mbox{$b \in [1,n+1]$} on some deformation $Y$ of $X,$ such that $Y \simeq \mathrm{Kum}^n(A),$ where~$A$ is a complex two-dimensional  torus such that $\mathrm{NS}(A)=\mathbb{Z}x.$ Then $\alpha$ is MBM if and only if $q(x) \geqslant 0$ and~\mbox{$q(\alpha)\,<\,0.$}
\end{Cor}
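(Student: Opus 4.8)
The plan is to deduce Corollary~\ref{iffA} directly from Theorem~\ref{TheoremKummer}, of which it is essentially a coordinate reformulation; the only real content is rewriting the numerical conditions \eqref{eq:bin0,n-1Kummer}, \eqref{eq:q(alpha)Kummer} and \eqref{eq:-2<2a<bKummer} in terms of $q(x)$ and $q(\alpha)$. Since both the MBM property and the sign of $q(\alpha)$ depend only on the ray spanned by $\alpha$, and $\widehat{\alpha}=\alpha/d(\alpha)$ is unchanged under rescaling $\alpha$, I would first reduce to the case where $\alpha$ is primitive, so that $\widehat{\alpha}$ is the dual class in the sense of Definition~\ref{definition:divisib}.

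First I would compute $q(\widehat{\alpha})$. On $Y\simeq\mathrm{Kum}^n(A)$ we have $H^2(Y,\mathbb{Z})=H^2(A,\mathbb{Z})\oplus\mathbb{Z}e$, with $e$ orthogonal to $\mathrm{NS}(A)$ and $q(e)=-2(n+1)$, so from $\widehat{\alpha}=x-\frac{b}{2(n+1)}e$ we get
$$
q(\widehat{\alpha})=q(x)+\frac{b^2}{(2(n+1))^2}\,q(e)=q(x)-\frac{b^2}{2(n+1)}.
$$
Because $\mathrm{NS}(A)$ inherits the cup product of the even lattice $H^2(A,\mathbb{Z})\cong U^{\oplus 3}$, the integer $q(x)$ is even; writing $q(x)=2a$ puts $q(\widehat{\alpha})$ into the shape of \eqref{eq:q(alpha)Kummer}.

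Next I would verify that the pair $(a,b)$ so obtained is exactly the one attached to $\alpha$ by Theorem~\ref{TheoremKummer}. The key point is the discriminant computation: since $x$ lies in the unimodular summand $H^2(A,\mathbb{Z})$ and $\mathrm{NS}(A)=\mathbb{Z}x$, reducing $\widehat{\alpha}$ modulo $H^2(Y,\mathbb{Z})$ leaves only $-\frac{b}{2(n+1)}e=-b\,\widehat{e}$, so $\delta(\alpha)=-b$ in the discriminant group $\mathbb{Z}/2(n+1)\mathbb{Z}$ generated by $\widehat{e}$, whence $\pm\delta(\alpha)=b$ with $b\in[1,n+1]$. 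Thus Theorem~\ref{TheoremKummer} applies with these $a,b$, and $\alpha$ is MBM if and only if $0\leqslant 2a<\frac{b^2}{2(n+1)}$.

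Finally I would translate the two inequalities. The left one, $0\leqslant 2a$, is literally $q(x)\geqslant 0$. The right one, $2a<\frac{b^2}{2(n+1)}$, is equivalent to $q(\widehat{\alpha})=2a-\frac{b^2}{2(n+1)}<0$, and since $\alpha=d(\alpha)\widehat{\alpha}$ with $d(\alpha)>0$ gives $q(\alpha)=d(\alpha)^2\,q(\widehat{\alpha})$, this says precisely $q(\alpha)<0$. Combining the two yields the claimed equivalence. I do not expect a serious obstacle here; the only point demanding care is the identification of the normalized coefficient of $x$ (namely $1$) and of the integer $b$ with the invariants $q(x)=2a$ and $\pm\delta(\alpha)=b$ of Theorem~\ref{TheoremKummer}, which is exactly where the hypotheses $\mathrm{NS}(A)=\mathbb{Z}x$ and $b\in[1,n+1]$ enter. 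As an alternative to invoking the full theorem, one can read off the two directions separately: Lemma~\ref{lemma:Athenq>0} gives $q(x)\geqslant 0$ for the ``only if'' direction, while Corollary~\ref{cor:q>2thusmbmKummer} supplies the ``if'' direction once $q(x)\geqslant 0$ and $q(\alpha)<0$ are granted.
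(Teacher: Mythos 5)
Your proposal is correct and follows the paper's intended route: the corollary is stated as a byproduct of the proof of Theorem~\ref{TheoremKummer}, and your unfolding of its numerical conditions (computing $q(\widehat{\alpha})=q(x)-\frac{b^2}{2(n+1)}$ and $\delta(\alpha)=-b$, then reading off the two inequalities), together with the fallback you mention to Lemma~\ref{lemma:Athenq>0} for necessity and Corollary~\ref{cor:q>2thusmbmKummer} for sufficiency, is exactly what Steps 3--5 of that proof establish. The only point you gloss---that $b\in[1,n+1]$ is uniquely determined by $\pm\delta(\alpha)=b$, so the theorem's pair $(a,b)$ must coincide with yours---is immediate since two such values $b_1,b_2$ with $b_1\equiv\pm b_2 \pmod{2(n+1)}$ and $b_i\in[1,n+1]$ must be equal.
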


\begin{Cor}
Let $X$ be an IHSM of Kummer type of dimension~\mbox{$2n \geqslant 4.$}   Let 
$$
\alpha \in H^2(X,\mathbb{Z})
$$
\noindent and $\widehat{\alpha} \in H^2(X, \mathbb{Q})$ be its dual class. Let $\alpha$ be MBM. Then we get
$$
q(\widehat{\alpha}) \geqslant -\frac{n+1}{2}.
$$
\noindent Moreover, the equality holds if and only if $\alpha$ is a class of the line in the Lagrangian space~$\mathbb{P}^n$, which is the fibre of the summation map $\mathrm{Sym}^{n+1}(C) \to C$ over zero for an elliptic curve $C \subset A.$  In this case  the MBM locus of $\alpha$ on the variety $X$  is bimeromorphic to  the fibre of the summation map $\mathrm{Sym}^{n+1}(C) \to C$ over zero.

\end{Cor}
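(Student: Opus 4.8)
The plan is to read the numerical bound directly off Theorem~\ref{TheoremKummer} and then treat the equality case by specialising the geometric lemmas already established for the Kummer type. First I would invoke Theorem~\ref{TheoremKummer}: since $\alpha$ is MBM, there is an integer $b \in [1,n+1]$ as in~\eqref{eq:bin0,n-1Kummer} and an integer $a$ with $0 \leqslant 2a < \frac{b^2}{2(n+1)}$ as in~\eqref{eq:-2<2a<bKummer} such that $q(\widehat{\alpha}) = 2a - \frac{b^2}{2(n+1)}$ by~\eqref{eq:q(alpha)Kummer}. From $2a \geqslant 0$ and $b \leqslant n+1$ one gets
$$
q(\widehat{\alpha}) = 2a - \frac{b^2}{2(n+1)} \geqslant 0 - \frac{(n+1)^2}{2(n+1)} = -\frac{n+1}{2},
$$
which is the asserted inequality. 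This part is purely formal, and the chain is sharp exactly when $2a = 0$ and $b^2 = (n+1)^2$, i.e. $(a,b) = (0,n+1)$; so no other admissible pair can attain the extremal value.

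Next I would read off the geometry of the equality case. The condition $q(x) = 2a = 0$ means the generator $x$ of $\mathrm{NS}(A)$ is represented by a smooth curve $C \subset A$ with $C^2 = 0$; since $A$ has trivial canonical bundle, adjunction gives $2g - 2 = C^2 = 0$, hence $g = 1$ and $C$ is an elliptic curve. Using the monodromy normalisation of Lemma~\ref{lemmaAVA} and Corollary~\ref{iffA}, the condition $b = n+1$ fixes the dual class as $\widehat{\alpha} = x - \frac{n+1}{2(n+1)}e = x - (n+1)\widehat{e} \equiv C - (n+1)\widehat{e}$.

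For the realisation and the reverse implication I would run Lemma~\ref{RHA} with $g = 1$ and $k = n+1$: an elliptic curve carries a $g^1_{n+1}$ (any degree $n+1 \geqslant 2$ pencil), producing a rational curve $R \equiv C - (g-1+k)\widehat{e} = C - (n+1)\widehat{e} = \widehat{\alpha}$, so $R$ realises $\alpha$. Conversely, for $\widehat{\alpha} = C - (n+1)\widehat{e}$ with $q(C) = 0$, $q(C,\widehat{e}) = 0$ (as $e \perp \mathrm{NS}(A)$) and $q(\widehat{e}) = -\frac{1}{2(n+1)}$, a one-line computation gives $q(\widehat{\alpha}) = (n+1)^2 q(\widehat{e}) = -\frac{n+1}{2}$, confirming that the line in the Lagrangian attains equality. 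To identify the MBM locus I would specialise the $g = 1$ branch of the proof of Lemma~\ref{lemma:Kummerminimalcurves}, where the completion of the MBM locus is the product of the fibre over zero of the summation map $\mathrm{Sym}^k(C) \to C$ with $\mathrm{Sym}^{n+1-k}(A)$; for $k = n+1$ the second factor is a point and the first is the Lagrangian $\mathbb{P}^n$ which is the zero-fibre of $\mathrm{Sym}^{n+1}(C) \to C$. Theorem~\ref{th:analiticallyequiv} then transports this back, so the MBM locus on $X$ is bimeromorphic to this $\mathbb{P}^n$.

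The main point requiring care is not the numerical bound, which is immediate from the sharp inequality, but the geometric identification in the equality case: one must identify the Lagrangian $\mathbb{P}^n$ as the zero-fibre of the summation map $\mathrm{Sym}^{n+1}(C) \to C$ and not merely as an abstract projective space. This is exactly what the $g = 1$ branch of Lemma~\ref{lemma:Kummerminimalcurves} supplies, since there the relevant fibre of the MBM locus is built from the fibre over zero of $\mathrm{Sym}^k(C) \to C$; with $k = n+1$ and $\mathrm{Sym}^{n+1-k}(A)$ a point the whole locus collapses to that fibre. The remaining effort is purely to substitute $g = 1$, $k = n+1$ into the already-established lemmas and read off the answer.
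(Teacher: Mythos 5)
Your proposal is correct and takes essentially the same route as the paper: the bound is read off Theorem~\ref{TheoremKummer}, equality forces $(a,b)=(0,n+1)$, and the equality case is identified via Lemma~\ref{RHA} with $g=1$, $k=n+1$, the $g=1$ branch of Lemma~\ref{lemma:Kummerminimalcurves} (whose completion $\mathbb{P}^{k-1}\times\mathrm{Sym}^{n+1-k}(A)$ collapses to the zero-fibre $\mathbb{P}^n$ of $\mathrm{Sym}^{n+1}(C)\to C$), and Theorem~\ref{th:analiticallyequiv} to transport the locus back to $X$. The only cosmetic difference is that you substitute a direct adjunction and lattice computation where the paper cites Hassett--Tschinkel's Example 6.3, which changes nothing of substance.
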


\begin{proof}

By Theorem~\ref{TheoremKummer} we get that either $q(\widehat{\alpha})=-\frac{1}{2(n+1)},$ or $q(\widehat{\alpha})\,=\,2a-\frac{b^2}{2(n+1)},$  where $b \in [1,n+1]$ is an integer number and  $a$ is an integer number which satisfies the inequalities
$$
 0 \leqslant 2a<\frac{b^2}{2(n+1)}.
$$

\noindent We get
$$
q(\widehat{\alpha})=2a-\frac{b^2}{2(n+1)} \geqslant -\frac{n+1}{2}.
$$

\noindent In the inequality the equality holds if and only if  $q(x)=0$ and $b=n+1.$ Therefore, by~\cite[Example 6.3]{Tschinkel} we get that~$\alpha$ is a class of the line in the Lagrangian subspace. By Lemma~\ref{RHA} the extremal curves on $\mathrm{Kum}^n(A)$ which correspond to the class~$\alpha$  are the fibre of the summation map $\mathrm{Sym}^{n+1}(C) \to C$ over zero for the elliptic curve~\mbox{$C \subset A.$} Indeed, by Lemma~\ref{RHA} we have 
$$
\widehat{\alpha}=C-(n+1)\widehat{e},
$$
\noindent where $C$ is an elliptic curve on a two-dimensional complex torus.  Let $R$ be a rational curve in the class $\widehat{\alpha}.$ This means that  that $C$ maps to the rational curve $R$ by the map of degree $n+1$ and the intersection of $R$ and the exceptional divisor on the Kummer variety is equal to $n+1.$ All linear systems on $C$ of degree $n+1$ which give the covering of a rational curve on a Kummer variety are parametrized by the fibre of 
$$
\mathrm{Sym}^{n+1}(C) \to C
$$
\noindent over zero. The fibre is isomorphic to $\mathbb{P}^n.$ Therefore, by  Theorem~\ref{th:analiticallyequiv} in this extremal case  for the Kummer type variety $X$ the MBM locus is  bimeromorphic to  the fibre of the summation map $\mathrm{Sym}^{n+1}(C) \to C$ over zero.

\end{proof}

Now we prove Proposition  \ref{BMeqAVabel}.

\begin{proof}[Proof of Proposition \ref{BMeqAVabel}]
According to Yoshioka theory we get that the walls are the hyperplanes in $\mathrm{Pos}(X)$ orthogonal to $a$ with $a^2 \geqslant 0$ (the square of $a$ in the sense of the Mukai scalar product). The vector $a\,=\,(u,\varkappa,s)$ belongs to 
$$
H^0(A,\mathbb{Z}) \oplus \mathrm{NS}(A) \oplus H^4(A,\mathbb{Z}),
$$
\noindent where~$A$ is a complex two-dimensional  torus. Turning to our situation we need to project this vector on $v^{\perp},$ i.e. consider $z\,=\,v^2a-(v,a)v.$ Obviously, this vector is orthogonal to $v.$ By simple calculations we have 
$$
z\,=\,v^2\varkappa-(a,e)e,
$$
\noindent where without loss of generality we put $v\,=\,(1,0,-1-n)$ and $e\,=\,(1,0,n+1).$ So the walls in $\mathrm{Pos}(X)$ are hyperplanes orthogonal to $z\,=\,v^2\varkappa-(a,e)e.$

By Theorem \ref{TheoremKummer} and its proof $z$ is MBM if and only if we can deform $X$ to $Z$ such that $\gamma(Z)$ is bimeromorphic to  $\mathrm{Kum}^n(A)$ for some complex two-dimensional torus~$A.$ Moreover, the following properties hold: the  Picard group is 
$$
\mathrm{Pic}(Z)\comm{_{\mathbb{Q}}}=\langle z, \gamma^{-1}(e) \rangle
$$
\noindent with $\gamma(z)=tx-be,$ where $b \in [1,\frac{t}{2}]$ is an integer, $q(x) \geqslant 0$ such that 
\begin{equation}\label{eqA:BMq=}
q(z)=q(\gamma(z))
\end{equation}
\noindent  and $\gamma \in \text{Mon}(Z).$ As $(a,e)=-s-\frac{tu}{2},$ we have $b=\frac{tu}{2}-s,$ because by conditions of Theorem \ref{Theorem12.1Kummer} we have 
$$
0 <  \frac{tu}{2}-s=-s-\frac{tu}{2}+tu \leqslant  \frac{t}{2}.
$$
\noindent From \eqref{eqA:BMq=} we get
$$
t^2q(\varkappa)-\left( \frac{tu^2}{4}+s^2+tus \right) t\,=\,t^2q(x)-\left( \frac{tu^2}{4}+s^2-tus \right) t.
$$

\noindent So we obtain $q(x)=q(\varkappa)-2us=a^2 \geqslant 0.$ Thus, $z$ is an MBM class.

In the opposite direction, assume that $\alpha$ is an MBM class. Thus, there is a deformation $Z$ of $X$ such that  $\gamma(Z)$ is bimeromorphic to $\mathrm{Kum}^n(A)$ for some complex two-dimensional  torus $A,$ where 
$$
s\gamma(\alpha)=tx-be
$$
\noindent  for some positive integer $s$ which divides $t.$ We have $b \in [1,\frac{t}{2}].$ Thus, let us take 
$$
a=\left(u,\varkappa,\frac{tu}{2}-b\right)
$$
\noindent  for some integer number $u$ and $q(\varkappa)=q(x)+2u\left(\frac{tu}{2}-b\right).$ So we get that $a$ satisfies the conditions of Theorem \ref{Theorem12.1Kummer}. Indeed, 
$$
a^2=q(\varkappa)-2u\left(\frac{tu}{2}-b\right)=q(x) \geqslant 0
$$
\noindent and
$$
(a,v)=\frac{tu}{2}-\frac{tu}{2}+b=b
$$
\noindent Thus, we get $0 <  (a,v) \leqslant  \frac{t}{2}.$ Moreover, we have that 
$$
z=t\varkappa-(a,e)e=t\varkappa-be
$$
\noindent is a projection of $a$ on $v^{\perp}.$

\end{proof}

%
%
%

\providecommand{\bysame}{\leavevmode\hbox to3em{\hrulefill}\thinspace}
\providecommand{\MR}{\relax\ifhmode\unskip\space\fi MR }
\providecommand{\MRhref}[2]{%
  \href{http://www.ams.org/mathscinet-getitem?mr=#1}{#2}
}
\providecommand{\href}[2]{#2}

\end{document}